\theoremstyle{plain}
\newtheorem{prop}{Proposition}
\newtheorem{thm}[prop]{Theorem}
\newtheorem{cor}[prop]{Corollary}
\newtheorem{lem}[prop]{Lemma}
\newtheorem{fact}[prop]{Fact}
\newtheorem*{thmA}{Theorem A}
\newtheorem*{corB}{Corollary B}
\newtheorem*{thmC}{Theorem C}
\newtheorem*{thmD}{Theorem D}
\theoremstyle{definition}
\theoremstyle{remark}
\newtheorem{rem}[prop]{Remark}
\newtheorem{example}[prop]{Example}
\numberwithin{prop}{section} 
\numberwithin{ques}{section} 
\numberwithin{equation}{section}
\numberwithin{step}{prop}
\DeclareMathOperator{\op}{op}
\DeclareMathOperator{\hd}{hd}
\DeclareMathOperator{\rad}{rad}
\DeclareMathOperator{\soc}{soc}
\DeclareMathOperator{\ab}{ab}
\DeclareMathOperator{\el}{el}
\DeclareMathOperator{\cts}{cts}
\DeclareMathOperator{\btor}{\mathbf{tor}}
\DeclareMathOperator{\ccd}{cd}
\DeclareMathOperator{\pdim}{prdim}
\DeclareMathOperator{\gr}{gr}
\DeclareMathOperator{\cl}{cl}
\DeclareMathOperator{\tr}{tr}
\DeclareMathOperator{\Hom}{Hom}
\DeclareMathOperator{\Ext}{Ext}
\DeclareMathOperator{\Tor}{Tor}
\DeclareMathOperator{\image}{im}
\DeclareMathOperator{\kernel}{ker}
\DeclareMathOperator{\coker}{coker}
\DeclareMathOperator{\stab}{stab}
\DeclareMathOperator{\iid}{id}
\DeclareMathOperator{\ob}{ob}
\DeclareMathOperator{\mor}{mor}
\DeclareMathOperator{\ifl}{inf}
\DeclareMathOperator{\rst}{res}
\DeclareMathOperator{\inrst}{ires}
\DeclareMathOperator{\prorst}{tres}
\DeclareMathOperator{\coind}{coind}
\DeclareMathOperator{\idn}{ind}
\DeclareMathOperator{\dfl}{def}
\DeclareMathOperator{\cores}{cor}
\DeclareMathOperator{\limone}{\textstyle{\varprojlim^1}_{\!\!i\geq 0}}
\DeclareMathOperator{\hotimes}{\widehat{\otimes}}
\newcommand{\hH}{\hat{H}}
\newcommand{\ca}[1]{\mathcal{#1}}
\newcommand{\F}{\mathbb{F}}
\newcommand{\N}{\mathbb{N}}
\newcommand{\Z}{\mathbb{Z}}
\newcommand{\Q}{\mathbb{Q}}
\newcommand{\dbl}{[\![}
\newcommand{\dbr}{]\!]}
\newcommand{\FpG}{\F_p\dbl G\dbr}
\newcommand{\ZpG}{\Z_p\dbl G\dbr}
\newcommand{\FpbG}{\F_p\dbl \baG\dbr}
\newcommand{\euX}{\mathfrak{X}}
\newcommand{\euY}{\mathfrak{Y}}
\newcommand{\prf}{\mathbf{prf}}
\newcommand{\tor}{\mathbf{tor}}
\newcommand{\cvec}{{}_{\F_p}\prf}
\newcommand{\prfp}{{}_{\Z_p}\prf}
\newcommand{\dis}{\mathbf{dis}}
\newcommand{\pdis}{{}_{\F_p}\dis}
\newcommand{\zpdis}{{}_{\Z_p}\tor}
\newcommand{\Mod}{\mathbf{mod}}
\newcommand{\fmod}{\mathbf{f.g.}{}_{\Z_p}\Mod}
\newcommand{\Zmod}{{}_{\Z}\Mod}
\newcommand{\FGprf}{{}_{\FpG}\prf}
\newcommand{\ZGprf}{{}_{\ZpG}\prf}
\newcommand{\FGdis}{{}_{\FpG}\dis}
\newcommand{\ZGdis}{{}_{\ZpG}\dis}
\newcommand{\prfFG}{\prf_{\FpG}}
\newcommand{\prfZG}{\prf_{\ZpG}}
\newcommand{\MC}{\mathfrak{cMF}}						
\newcommand{\Adf}{\mathfrak{Add}}
\newcommand{\fgPerM}{\mathbf{f.g.perm}}
\newcommand{\PerM}{\mathbf{perm}}
\newcommand{\der}{\partial}
\newcommand{\eps}{\varepsilon}
\newcommand{\triv}{\{1\}}									
\newcommand{\EE}{\mathbf{E}}							
\newcommand{\facr}[4]{#1\overset{#3}{\to}#2\overset{#4}{\to}#1}	
\newcommand{\baG}{\bar{G}}
\newcommand{\baU}{\bar{U}}
\newcommand{\baV}{\bar{V}}
\newcommand{\caA}{\ca{A}}
\newcommand{\caB}{\ca{B}}
\newcommand{\caR}{\ca{R}}
\newcommand{\caM}{\ca{M}}
\newcommand{\bcM}{\bar{\caM}}
\newcommand{\caL}{\ca{L}}
\newcommand{\caO}{\ca{O}}
\newcommand{\ttau}{\tilde{\tau}}
\newcommand{\tG}{\tilde{G}}
\newcommand{\tpi}{\tilde{\pi}}
\newcommand{\tci}{\tilde{c}}
\newcommand{\tmu}{\tilde{\mu}}
\newcommand{\tphi}{\tilde{\phi}}
\newcommand{\tiota}{\tilde{\iota}}
\newcommand{\teps}{\tilde{\eps}}
\newcommand{\teta}{\tilde{\eta}}
\newcommand{\eui}{\mathfrak{i}}
\newcommand{\eut}{\mathfrak{t}}
\newcommand{\bos}{\mathbf{s}}
\newcommand{\bt}{\mathbf{t}}
\newcommand{\boX}{\mathbf{X}}
\newcommand{\tboX}{\check{\boX}}
\newcommand{\hboX}{\hat{\boX}}
\newcommand{\boY}{\mathbf{Y}}
\newcommand{\boZ}{\mathbf{Z}}
\newcommand{\tboY}{\check{\boY}}
\newcommand{\boA}{\mathbf{A}}
\newcommand{\boB}{\mathbf{B}}
\newcommand{\boS}{\mathbf{S}}
\newcommand{\boT}{\mathbf{T}}
\newcommand{\buT}{\boldsymbol{\Upsilon}}
\newcommand{\bok}{\mathbf{k}}
\newcommand{\boc}{\mathbf{c}}
\newcommand{\El}{\mathbf{El}}
\newcommand{\boh}{\mathbf{h}}
\newcommand{\argu}{\hbox to 7truept{\hrulefill}}
\begin{document}
\title[The projective dimension of profinite modules for pro-p groups]{The projective dimension of profinite modules\\
for pro-p groups}
\author{Th. Weigel}
\date{\today}
\address{Th. Weigel\\
Universit\`a di Milano-Bicocca\\
U5-3067, Via R.Cozzi, 53\\
20125 Milano, Italy}
\email{thomas.weigel@unimib.it}

\begin{abstract}
The homology groups introduced by A.~Brumer can be used
to establish a criterion ensuring that a profinite $\FpG$-module
of a pro-$p$ group $G$ has projective dimension $d<\infty$
(cf. Thm.~A). This criterion yields a new characterization of
free pro-$p$ groups (cf. Cor.~B). Applied to a semi-direct factor
$\facr{G}{\Z_p}{\tau}{\sigma}$ isomorphic to $\Z_p$ which defines
a non-trivial end in the sense of A.A.~Korenev one concludes 
that the closure of the normal closure
of the image of $\sigma$ is a free pro-$p$ subgroup (cf. Thm.~C).
From this result we will deduce a structure theorem (cf. Thm.~D)
for finitely generated pro-$p$ groups with infinitely many ends.
\end{abstract}

\subjclass[2010]{Primary 20E18, secondary 20E05, 20E06, 20J06}

\maketitle


\section{Introduction}
\label{s:intro}
In an abelian category $\caA$ with enough projective objects the
{\it projective dimension} $\pdim(A)$ of an object $A\in\ob(\caA)$
is the minimal length of a projective resolution of $A$ (cf. \eqref{eq:pdimdef}).
Moreover, it is well known that
\begin{equation}
\label{eq:prdim}
\pdim(A)=\min(\{\,d\in\N_0\mid \Ext^{d+1}_{\caA}(A,\argu)=0\,\}\cup\{\infty\})
\end{equation}
(cf. \cite[Lemma~4.1.6]{weib:hom}).
The main purpose of this paper is to show that in
the abelian category of profinte left $\FpG$-modules for a pro-$p$ group $G$
less information is necessary in order to detect the projective dimension of an object,
and to discuss several consequences of this fact.
The first main result can be stated as follows (cf. Thm.~\ref{thm:dim}).

\begin{thmA}
Let $G$ be a pro-$p$ group, and let $\caB$ be a basis of neighbourhoods of $1\in G$
consisting of open subgroups of $G$. Assume further that $M$ is a profinite left $\FpG$-module
satisfying $H_d(G,M)\not=0$.
Then the following are equivalent.
\begin{itemize}
\item[(i)] $\pdim(M)=d$;
\item[(ii)] $\cores_{G,U}\colon H_d(G,M)\longrightarrow H_d(U,\rst^G_U(M))$ is injective
for all $U\in\caB$.
\end{itemize}
\end{thmA}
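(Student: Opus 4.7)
The plan is to reduce both directions to homological vanishing statements using a single short exact sequence. First I would appeal to the fact that, since $\FpG$ is a pro-$p$ local profinite ring, every profinite $\FpG$-module $M$ admits a \emph{minimal} free resolution $P_\bullet\to M$ whose differentials factor through $\omega_G P_\bullet$, so $H_d(G,M)\cong\F_p\hotimes_{\FpG}P_d$; profinite Nakayama then yields $\pdim(M)=\sup\{d:H_d(G,M)\neq 0\}$, and in particular $\pdim(M)\leq d\iff H_{d+1}(G,M)=0$. Combined with the standing hypothesis $H_d(G,M)\neq 0$ (which already gives $\pdim(M)\geq d$), the theorem reduces to showing $H_{d+1}(G,M)=0\iff\cores_{G,U}$ is injective for every $U\in\caB$. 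The working tool is the short exact sequence
\[0\to M\xrightarrow{m\mapsto N_U\otimes m}\F_p[G/U]\hotimes_{\F_p}M\to Q_U\to 0,\]
in which the middle term carries the diagonal $G$-action and is identified with $\Mind^G_U\rst^G_U M$ via $gU\otimes m\leftrightarrow g\otimes g^{-1}m$; here $N_U:=\sum_{gU}gU$ is the norm and $Q_U:=(\F_p[G/U]/\F_p N_U)\hotimes_{\F_p}M$. By Shapiro's lemma the unit-induced map $H_*(G,M)\to H_*(G,\Mind^G_U\rst^G_U M)=H_*(U,\rst^G_U M)$ coincides with $\cores_{G,U}$, giving the long exact sequence
\[H_{d+1}(U,M)\to H_{d+1}(G,Q_U)\to H_d(G,M)\xrightarrow{\cores_{G,U}} H_d(U,M).\]

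For the implication (i)$\Rightarrow$(ii) I would observe that tensoring over $\F_p$ with the finite-dimensional $\F_p$-module $J_U:=\F_p[G/U]/\F_p N_U$ preserves projectivity of $\FpG$-modules, via the untwisting isomorphism $J_U\hotimes_{\F_p}\FpG\cong J_U^{\mathrm{triv}}\hotimes_{\F_p}\FpG\cong\FpG^{\dim J_U}$ identifying diagonal and free actions. Applying this termwise to a minimal resolution of $M$ gives $\pdim(Q_U)\leq\pdim(M)\leq d$, hence $H_{d+1}(G,Q_U)=0$, and the LES above forces $\cores_{G,U}$ to be injective.

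For the converse (ii)$\Rightarrow$(i) I would induct on $d$ via syzygy-shifting. Let $\pi\colon P\twoheadrightarrow M$ be a minimal projective cover with kernel $\Omega M\subseteq\omega_G P$; the connecting maps for $0\to\Omega M\to P\to M\to 0$ (over both $G$ and $U$) provide natural isomorphisms $H_i(G,M)\cong H_{i-1}(G,\Omega M)$ and $H_i(U,M)\cong H_{i-1}(U,\Omega M)$ for $i\geq 1$, and naturality intertwines $\cores_{G,U}^M$ in degree $d$ with $\cores_{G,U}^{\Omega M}$ in degree $d-1$. Since $H_{d-1}(G,\Omega M)\cong H_d(G,M)\neq 0$, the induction hypothesis applied to $\Omega M$ yields $\pdim(\Omega M)=d-1$, whence $\pdim(M)=d$. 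The main obstacle will be the base case $d=0$: that $\cores_{G,U}\colon M_G\to M_U$ injective for every $U\in\caB$ forces $M$ to be projective. The plan there is contrapositive, using the commutative square $\pi_U\circ\cores_{G,U}^P=\cores_{G,U}^M\circ\pi_G$ in which $\pi_G$ is an isomorphism by minimality. Injectivity of $\cores_{G,U}^M$ for all $U\in\caB$ then translates into $\cores_{G,U}^P(P_G)\cap\ker(\pi_U)=0$ in $P_U$, where for a free basis $\{e_i\}$ of $P$ one has $\cores_{G,U}^P(P_G)=\widehat{\bigoplus}_I\F_p\cdot N_U\bar e_i$. Exploiting that $N_U$ lies in arbitrarily deep powers of $\omega_G$ as $U$ shrinks along $\caB$, together with the neighbourhood-basis separation $\bigcap_{U\in\caB}\omega_U P=0$, I would aim, whenever $\Omega M\neq 0$, to exhibit an open $U\in\caB$ and a lift $\tilde m\notin\omega_G P$ with $N_U\tilde m\in\Omega M+\omega_U P$, producing the desired contradiction.
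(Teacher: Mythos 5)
Your reduction to the vanishing of $H_{d+1}(G,M)$ via minimal resolutions and Nakayama is correct, and your direction (i)$\Rightarrow$(ii) is complete and genuinely different from the paper's: the paper deduces it from the left-exactness of $\Tor^G_d(\argu,M)$ when $\pdim(M)=d$ (Fact~\ref{fact:pdim}), while you use the sequence $0\to M\to\F_p[G/U]\hotimes_{\F_p}M\to Q_U\to 0$ and the untwisting isomorphism to get $\pdim(Q_U)\le d$; both are valid. The syzygy induction for (ii)$\Rightarrow$(i) also works: the corestriction is induced by the natural coefficient map $M\to\F_p[G/U]\hotimes_{\F_p}M$, hence commutes with connecting homomorphisms, and minimality of the cover over $G$ makes $H_1(G,M)\to H_0(G,\Omega M)$ an isomorphism while over $U$ it is still injective, which suffices. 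One point you should make explicit: since $\cores_{G,V}=\cores_{U,V}\circ\cores_{G,U}$ and every open subgroup contains a member of $\caB$, hypothesis (ii) upgrades to injectivity for \emph{all} open subgroups, in particular all open \emph{normal} ones; you will need normality below.

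The genuine gap is the base case $d=0$, which carries essentially all of the content of the theorem, and the tools you list there do not close it. From $\Omega M\neq 0$ and $\bigcap_{U\in\caB}\omega_U P=0$ you can choose an open normal $U$ with $\Omega M\not\subseteq\omega_U P$, so that $(\Omega M+\omega_U P)/\omega_U P$ is a nonzero $\F_p[G/U]$-submodule of $P_U=P/\omega_U P$; but neither the depth of $N_U$ in powers of $\omega_G$ nor the separation property forces this submodule to meet $\cores^P_{G,U}(P_G)=N_{G/U}\cdot P_U$ nontrivially, which is what your contradiction requires. The missing idea is the paper's socle argument: since $G/U$ is a finite $p$-group, every nonzero $\F_p[G/U]$-submodule of $P_U$ intersects $\soc_{G/U}(P_U)=(P_U)^{G/U}$ nontrivially (Fact~\ref{fact:soctriv}), and since $P$ is projective the Mackey functor $\boh_0(P)$ is of type $H^0$ (Fact~\ref{fact:projct}), which identifies $\image(\cores^P_{G,U})$ with exactly that socle and shows $\cores^P_{G,U}$ is injective, so the nonzero socle element you obtain is $N_U\tilde m$ with $\tilde m\notin\omega_G P$, as desired. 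Once you have this ingredient you could in fact drop the induction entirely: the paper runs the same socle argument uniformly in $d$, applied to $\kernel(\beta^d_U)$ in \eqref{eq:dimth1}--\eqref{eq:dimth3}, concluding $\boh_0(\der_{d+1})=0$ and then $P_{d+1}=0$ by minimality.
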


Here $H_\bullet(G,\argu)$ denote the {\it homology groups} introduced in \cite{brum:pseudo},
and the mapping $\cores_{G,U}\colon H_d(G,M)\longrightarrow H_d(U,\rst^G_U(M))$ is the {\it corestriction
mapping}. For a pro-$p$ group $G$ we denote by $\Phi(G)=\cl(G^p[G,G])$ 
its {\it Frattini subgroup},
and define $G^{\ab,\el}=G/\Phi(G)$ to be its {\it maximal elementary abelian quotient}.
For $d=1$ and the trivial left $\FpG$-module $\F_p$ one deduces the following
consequence of Theorem~A (cf. Cor.~\ref{cor:thdim}).

\begin{corB}
Let $G$ be a pro-p group, and 
let $\caB$ be a basis of neighbourhoods of $1\in G$
consisting of open subgroups of $G$.
Then the following are equivalent:
\begin{itemize}
\item[(i)] $G$ is a free pro-$p$ group;
\item[(ii)] $\tr_{G,U}\colon G^{\ab,\el}\to U^{\ab,\el}$ is 
injective for all $U\in\caB$;
\item[(iii)] the canonical map $j_{\El}\colon G^{\ab,\el}\to D_1(\F_p)$
is injective.
\end{itemize}
\end{corB}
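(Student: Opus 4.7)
The plan is to apply Theorem~A with $d=1$ and the trivial module $M=\F_p$. Two standard facts are required: (a) the natural isomorphism $H_1(G,\F_p)\cong G^{\ab,\el}$ (and likewise for each $U\in\caB$) under which the corestriction $\cores_{G,U}$ corresponds to the transfer $\tr_{G,U}$; and (b) the characterization of free pro-$p$ groups as precisely those pro-$p$ groups $G$ for which $\pdim_{\FpG}(\F_p)\leq 1$.

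I first dispose of the trivial case $G=\{1\}$, in which all three statements hold vacuously. For $G\neq\{1\}$ the Frattini quotient $G^{\ab,\el}=H_1(G,\F_p)$ is non-zero, so the non-vanishing hypothesis of Theorem~A is satisfied. For (i)$\Leftrightarrow$(ii), freeness of $G$ then amounts to $\pdim_{\FpG}(\F_p)=1$; by Theorem~A this is equivalent to the injectivity of $\cores_{G,U}$ for every $U\in\caB$, which by (a) is the injectivity of $\tr_{G,U}$.

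The remaining equivalence (ii)$\Leftrightarrow$(iii) relies on the construction of $D_1(\F_p)$ and the canonical map $j_{\El}\colon G^{\ab,\el}\to D_1(\F_p)$ introduced earlier in the paper: by construction $j_{\El}$ factors through the product of the transfer maps $\tr_{G,U}$, $U\in\caB$, and because $\caB$ is a neighbourhood basis, joint injectivity of all $\tr_{G,U}$ coincides with injectivity of $j_{\El}$. This last translation is where the only real subtlety lies, since it rests on the specific definition of $D_1(\F_p)$ that is not reproduced in the excerpt; the rest reduces to a direct application of Theorem~A combined with the well-known identifications above.
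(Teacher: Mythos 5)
Your proposal is correct and follows essentially the same route as the paper: Theorem~A applied with $d=1$ and $M=\F_p$ (after disposing of the trivial group), the identification of freeness with $\ccd_p(G)=\pdim_{\FpG}(\F_p)=1$, the identification of $\cores_{G,U}$ on $H_1(\argu,\F_p)$ with the transfer, and the definition of $D_1(\F_p)$ for the last equivalence. One small imprecision in your final step: since $D_1(\F_p)$ is a \emph{direct limit} rather than a product, $\kernel(j_{\El})$ is the directed union of the kernels $\kernel(\tr_{G,U})$, so injectivity of $j_{\El}$ is equivalent to injectivity of \emph{each} $\tr_{G,U}$ (not merely to joint injectivity into a product), and the reduction to $U\in\caB$ uses cofinality together with the factorization $\tr_{G,U}=\tr_{V,U}\circ\tr_{G,V}$ for $U\subseteq V$.
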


Here $\tr_{G,U}\colon G^{\ab,\el}\to U^{\ab,\el}$ denotes the {\it transfer} from $G$ to $U$, and
\begin{equation}
\label{eq:tateD}
D_1(\F_p)=
\textstyle{\varinjlim_{U\subseteq_\circ G} H^1(U,\F_p)^\vee\simeq 
\varinjlim_{U\subseteq_\circ G} H_1(U,\F_p)\simeq
\varinjlim_{U\subseteq_\circ G} U^{\ab,\el}},
\end{equation}
is the abelian group introduced by J.~Tate in his letter to J-P.~Serre (cf. \cite[\S I, App.~1]{ser:gal}).
Note that the inverse limits in \eqref{eq:tateD} are taken over all open subgroups of $G$
and the maps are given by transfer.
The map $j_{\El}\colon G^{\ab,\el}\to D_1(\F_p)$ is just the canonical map.

Let $G$ be a pro-$p$ group, and let $\tau\colon G\to\Z_p$ be a 
surjective (continuous) homomorphism of pro-p groups.
Since $\Z_p$ is a free pro-$p$ group, there exists a continuous section
$\sigma\colon\Z_p\to G$, i.e., $\tau\circ\sigma=\iid_{\Z_p}$.
For short we call a surjective homomorphism $\tau\colon G\to \Z_p$ with
a section $\sigma\colon\Z_p\to G$ a {\it semi-direct factor} isomorphic to $\Z_p$.
A semi-direct factor $\facr{G}{\Z_p}{\tau}{\sigma}$ isomorphic to $\Z_p$ will be called 
an {\it $\F_p$-direction}, if
$j_{\El}(\sigma(1))\not=0$.
For a pro-$p$ group with an $\F_p$-direction one has the following (cf. Thm.~\ref{thm:Fpdir}).

\begin{thmC}
Let $G$ be a pro-$p$ group, and let $\facr{G}{\Z_p}{\tau}{\sigma}$ be
an $\F_p$-direction. Then $N=\cl(\langle\,{}^g\Sigma\mid g\in G\,\rangle)$,
$\Sigma=\image(\sigma)$, is a free normal subgroup, and one has an
isomorphism
$N^{\ab,\el}\simeq\F_p\dbl G/N\dbr$ of profinite left $\FpG$-modules.
Moreover, if $G$ is countably
based, then the induced extension
\begin{equation}
\label{eq:specext}
\xymatrix{
\triv\ar[r]&N^{\ab,\el}\ar[r]&G/\Phi(N)\ar[r]&G/N\ar[r]&\triv
}
\end{equation}
of pro-$p$ groups splits.
\end{thmC}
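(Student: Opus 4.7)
The plan is to derive all three conclusions by applying Theorem~A to the profinite $\FpG$-module $M:=\F_p\dbl G/N\dbr$ with $d=1$. Shapiro's lemma for profinite groups yields compatible isomorphisms $H_\bullet(G,M)\simeq H_\bullet(N,\F_p)$ together with the change-of-rings identification $\pdim_{\FpG}(M)=\pdim_{\F_p\dbl N\dbr}(\F_p)$; in particular $H_1(G,M)=N^{\ab,\el}$, and $\pdim_{\FpG}(M)=1$ is equivalent to $N$ being a nontrivial free pro-$p$ group.

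First I would record the basic surjection. Since $N=\cl(\langle {}^g\Sigma\mid g\in G\rangle)$, its Frattini quotient $N^{\ab,\el}$ is topologically generated as a $G$-module (via conjugation) by $\bar t:=\sigma(1)\Phi(N)$; and as $N$ acts trivially on $N^{\ab,\el}$, the action descends to $G/N$. This yields a surjective homomorphism of profinite left $\FpG$-modules
\[
\pi\colon\F_p\dbl G/N\dbr\twoheadrightarrow N^{\ab,\el},\qquad 1\mapsto\bar t,
\]
so the task reduces to showing that $\pi$ is injective and that $N$ is free pro-$p$.

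Next I would establish freeness of $N$ and injectivity of $\pi$ simultaneously via Theorem~A. A Mackey decomposition
\[
\rst^G_U(M)\simeq\bigoplus_{g\in U\backslash G/N}\F_p\dbl U/(U\cap {}^gN)\dbr,
\]
followed by Shapiro on each summand, identifies the target of $\cores_{G,U}$ in Theorem~A(ii) with $\bigoplus_{g}(U\cap {}^gN)^{\ab,\el}$, and the map becomes a direct sum of transfers; the $g=1$ component is essentially $\tr_{N,U\cap N}\colon N^{\ab,\el}\to(U\cap N)^{\ab,\el}$. The $\F_p$-direction hypothesis $j_{\El}(\sigma(1))\neq 0$ gives $\tr_{G,U}(\sigma(1))\neq 0$ in $U^{\ab,\el}$ for every $U\in\caB$, and the commutative square relating $\tr_{G,U}$ to $\tr_{N,U\cap N}$ via the natural maps induced by $N\hookrightarrow G$ and $U\cap N\hookrightarrow U$ then forces $\tr_{N,U\cap N}(\bar t)\neq 0$. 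Restricting $\caB$ to cofinally many open normal subgroups of $G$, so that $\tr_{N,U\cap N}$ becomes $G$-equivariant, and then exploiting the cyclic $\FpG$-structure of $N^{\ab,\el}$ at $\bar t$, propagates this non-vanishing to injectivity of $\cores_{G,U}$ on all of $N^{\ab,\el}$. Theorem~A then delivers $\pdim_{\FpG}(M)=1$, so $N$ is free pro-$p$; and since $\pi$ is a surjection from $\F_p\dbl G/N\dbr$ which by the freeness of $N$ has the same $\F_p$-rank as $N^{\ab,\el}$, it is an isomorphism.

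Finally, for the splitting of \eqref{eq:specext}: extensions of $G/N$ by the $G/N$-module $N^{\ab,\el}$ are classified by $H^2(G/N,N^{\ab,\el})$, the countably-based hypothesis on $G$ ensuring that this bijection holds in Brumer's continuous framework. Under $N^{\ab,\el}\simeq\F_p\dbl G/N\dbr$, which is the module induced from the trivial subgroup, Shapiro in cohomology yields
\[
H^2(G/N,\F_p\dbl G/N\dbr)\simeq H^2(\{1\},\F_p)=0,
\]
so the extension splits. I expect the main obstacle to be the propagation argument in paragraph three: converting the single-element hypothesis $j_{\El}(\sigma(1))\neq 0$ into module-level injectivity requires careful use of $G$-equivariance of transfer (available only for $G$-invariant open subgroups) together with the cyclic $\FpG$-structure of $N^{\ab,\el}$.
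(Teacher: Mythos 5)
Your strategy --- reduce everything to Theorem~A applied to $M=\F_p\dbl G/N\dbr$ --- is a genuinely different organization from the paper's, but it has a fatal gap at its center: the passage from ``$\cores_{G,U}(\bar t)\neq 0$'' to ``$\cores_{G,U}$ is injective on $H_1(G,M)\simeq N^{\ab,\el}$''. Knowing that the topological generator $\bar t$ of a cyclic $\F_p\dbl G/N\dbr$-module lies outside the kernel only says the kernel is a \emph{proper} closed submodule; cyclic modules have many nonzero proper submodules avoiding the generator (the augmentation ideal of $\F_p[C_p]$ already does). Cyclic generation controls the \emph{head} of a module, whereas kernels are detected by the \emph{socle}: the mechanism the paper actually uses (Fact~2.8 together with Lemma~4.10) is that over a finite $p$-group every nonzero closed submodule meets the socle, combined with the fact that the socle at level $U$ is exactly the image of restriction from $G$ --- and that last identity is the ``type $H^0$'' (Galois descent) property of $\idn_{\Sigma^\sharp}^{G^\sharp}(\boT)\simeq\boh^0(\coind_\Sigma^G(\F_p))$, which your setup never introduces. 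I do not see how to get injectivity from cyclicity alone. Three smaller problems in the same part: the square relating $\tr_{G,U}$ to $\tr_{N,U\cap N}$ via the inclusion-induced maps does \emph{not} commute (the correct statement is the Mackey double coset formula, with extra terms; the weaker claim $\cores_{G,U}(\bar t)\neq0$ does follow, but from naturality of $\cores$ in the coefficients applied to the augmentation $M\to\F_p$); $U\backslash G/N$ is an infinite profinite space, so your direct-sum decomposition must be a completed one; and the rank count used to upgrade the surjection $\pi$ to the isomorphism $N^{\ab,\el}\simeq\F_p\dbl G/N\dbr$ is not valid for profinite modules (a continuous surjection of free profinite $\F_p$-spaces ``of the same rank'' need not be injective). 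The paper instead gets (b) from the level-wise injectivity of the Mackey functor morphism $\tphi$.

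The splitting argument also fails as stated. Shapiro's lemma does not hold for continuous cochain cohomology with \emph{profinite} induced coefficients: if it did in degree one, $H^1_{\cts}(G,\FpG)$ would always vanish and no pro-$p$ group would have more than one $\F_p$-end; in degree two, $H^2_{\cts}(\Z_p^2,\F_p\dbl\Z_p^2\dbr)\simeq\F_p\neq0$ by Poincar\'e duality. So the classifying group $H^2_{\cts}(\baG,\F_p\dbl\baG\dbr)$ need not vanish, and the extension cannot be split by a one-line cohomology computation. This is why the paper does not compute that group at all: it splits the finite-level extensions $\bt_{\baU}$ one at a time (using injectivity of $\boh^0$ of an injective discrete module to peel off a complement $\boB$ of $\boY$ inside $\rst^{G^\sharp}_{\baG^\sharp}(\El(G))$) and then assembles these splittings via the $\varprojlim^1$-argument of Proposition~2.7; the countably based hypothesis is what makes that last step work.
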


An extension of pro-$p$ groups
\begin{equation}
\label{eq:specext2}
\xymatrix{
\triv\ar[r]&N\ar[r]^{\iota}&G\ar[r]^{\pi}&\baG\ar[r]&\triv
}
\end{equation}
$\baG=G/N$, will be called {\it special}, if
\begin{itemize}
\item[(i)] $N$ is a free pro-$p$ group;
\item[(ii)] $N^{\ab,\el}$ is a projective profinite left $\FpbG$-module;
\item[(iii)] the induced extension of pro-$p$ groups
\begin{equation}
\label{eq:specext3}
\xymatrix{
\triv\ar[r]&N^{\ab,\el}\ar[r]^-{\iota_\circ}&G/\iota(\Phi(N))\ar[r]^-{\pi_\circ}&\baG\ar[r]&\triv
}
\end{equation}
splits.
\end{itemize}
If the special extension \eqref{eq:specext2} of pro-$p$ groups splits,
then one has an isomorphism $G\simeq F\coprod\baG$ for some free
pro-$p$ group $F$, where $\coprod$ denotes the free product in the category
of pro-$p$ groups (cf. Thm.~\ref{thm:comfree}).
Although the authors do not know of any
non-split special extension of pro-$p$ groups,
the authors cannot prove their non-existence either.

In \cite{kor:ends}, A.A.~Korenev initiated a theory of ends
for pro-$p$ groups. For a finitely generated pro-$p$ group $G$
the {\it number of ends} is given by
\begin{equation}
\label{eq:endsdef}
\EE(G)=
\begin{cases}
1+\dim(D_1(\F_p))&\qquad\text{for $|G|=\infty$,}\\
\hfil 0\hfil&\qquad \text{for $|G|<\infty$.}
\end{cases}
\end{equation}
In \cite{wz:stall}, we will modify this definition slightly.
In order to distinguish between the number of ends defined in \cite{wz:stall}
and the number $\EE(G)$, we will call $\EE(G)$ 
from now on the {\it number of $\F_p$-ends} of $G$.
A.A.~Korenev showed that the only possible values of $\EE(G)$ are $0$, $1$, $2$ or $\infty$,
Moreover, $\EE(G)=0$ holds if, and only if, $G$ is finite, and $\EE(G)=2$ holds if, and only if,
$G$ is infinite and virtually cyclic. However, as he mentioned, an analogue of 
Stallings' decomposition theorem (cf. \cite{stall:ends}) is missing in this context.
The existence of non-split special extensions of pro-$p$ groups
might be an obstacle for proving such a theorem in this context.
Nevertheless, from Theorem~C one conludes the following result (cf. Thm.~\ref{thm:infends}).

\begin{thmD}
Let $G$ be a finitely generated pro-$p$ group with $\EE(G)=\infty$.
Then there exists an open normal subgroup $U$ which is a special extension
of rank $1$, i.e., $U$ has a closed normal subgroup $N$, $N\not=U$, such that
\begin{equation}
\label{eq:specext4}
\xymatrix{
\triv\ar[r]&N\ar[r]^{\iota}&U\ar[r]^{\pi}&U/N\ar[r]&\triv
}
\end{equation}
is a special extension, and $N^{\ab,\el}$ is isomorphic to $\F_p\dbl U/N\dbr$ as
profinite left $F_p\dbl U/N\dbr$-module.
\end{thmD}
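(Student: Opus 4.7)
The plan is to produce an open normal subgroup $U\trianglelefteq_\circ G$ carrying an $\F_p$-direction and apply Theorem~C. The hypothesis $\EE(G)=\infty$ is equivalent to $\dim_{\F_p}D_1(\F_p)=\infty$; since the direct system
$$
D_1(\F_p)=\varinjlim_{V\trianglelefteq_\circ G} V^{\ab,\el}
$$
remains cofinal upon restriction to an open subgroup, this infinitude is inherited by every open $U\subseteq_\circ G$.

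From this infinite-dimensionality I would extract an open normal subgroup $U\trianglelefteq_\circ G$ together with an element $x\in U$ satisfying (a) the class of $x$ in $U^{\ab}$ generates a $\Z_p$-direct summand of the torsion-free part of $U^{\ab}$, and (b) $j_\El(x)\neq 0$ in $D_1(\F_p)$. Projection onto the chosen $\Z_p$-summand yields a surjection $\tau\colon U\to\Z_p$, and $\sigma\colon\Z_p\to U$ defined by $\sigma(1)=x$ is a section with $j_\El(\sigma(1))\neq 0$; this is precisely an $\F_p$-direction $\facr{U}{\Z_p}{\tau}{\sigma}$. Since $G$ is finitely generated, so is $U$, and in particular $U$ is countably based; Theorem~C therefore produces the closed normal subgroup $N=\cl(\langle\,{}^h\Sigma\mid h\in U\,\rangle)$ with $\Sigma=\image(\sigma)$, which is a free pro-$p$ group satisfying $N^{\ab,\el}\simeq\F_p\dbl U/N\dbr$ as profinite left $\F_p\dbl U/N\dbr$-modules, and a splitting of the induced extension $\triv\to N^{\ab,\el}\to U/\Phi(N)\to U/N\to\triv$. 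Hence $\triv\to N\to U\to U/N\to\triv$ is a special extension: (i) and (iii) come from Theorem~C, and (ii) holds because $N^{\ab,\el}\simeq\F_p\dbl U/N\dbr$ is free of rank $1$, hence projective as a profinite left $\F_p\dbl U/N\dbr$-module. The rank-one condition of Theorem~D is this same isomorphism.

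It remains to check $N\neq U$. If $N=U$, then $U/N=\triv$, so $\F_p\dbl U/N\dbr\simeq\F_p$ and $N=U$ would be a free pro-$p$ group with one-dimensional Frattini quotient, forcing $U\simeq\Z_p$; but then $\EE(U)=2$, contradicting $\EE(U)=\EE(G)=\infty$. The main obstacle in this approach is the first step: converting the abstract infinite-dimensionality of $D_1(\F_p)$ into an honest element $x\in U$ whose image in $U^{\ab}$ lies in a $\Z_p$-direct summand \emph{and} which simultaneously satisfies $j_\El(x)\neq 0$. A priori, $D_1(\F_p)$ could be spanned by transfers of finite-order classes, so showing that some open normal subgroup admits a torsion-free lift detecting a nonzero class of $D_1(\F_p)$ — presumably via a careful interplay between Schreier-type growth bounds on $d(V)$ and the behaviour of transfers in the direct limit — is where the real work of the proof lies.
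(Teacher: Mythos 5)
Your overall architecture is exactly the paper's: find an open normal subgroup $U$ with an $\F_p$-direction, invoke Theorem~C, and check $N\neq U$ (your virtually-cyclic argument for the last point is correct and is a worthwhile addition, since the paper leaves it implicit). However, the step you flag at the end as ``where the real work of the proof lies'' is a genuine gap, and your guess at how to fill it (``Schreier-type growth bounds on $d(V)$'') is not how it is done. The missing ingredient is Theorem~\ref{thm:sel} (quoted from \cite{wei:fab}): a FAb pro-$p$ group $G$ satisfies $H^1_{\cts}(G,\FpG)=0$, hence $\EE(G)=1$ when $G$ is infinite. Contrapositively, $\EE(G)=\infty>1$ forces $G$ not to be FAb, so some open subgroup --- which may be shrunk to an open \emph{normal} subgroup $U$, since every open subgroup of $U$ again has infinite abelianization --- has $U^{\ab}$ an infinite finitely generated $\Z_p$-module. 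This is precisely what rules out your worry that $D_1(\F_p)$ ``could be spanned by transfers of finite-order classes'': if all open subgroups were torsion-abelianized (FAb), then $D_1(\F_p)$ would vanish outright.

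Granting that $U^{\ab}=\tor(U^{\ab})\oplus C$ with $C\neq 0$ free, and that $U$ may also be chosen with $j_{\El,U}\colon U^{\ab,\el}\to D_1(\F_p)$ non-trivial, you still need an element that \emph{simultaneously} generates a $\Z_p$-summand and is detected by $j_{\El,U}$; these two requirements need not be met by the same obvious candidate. The paper's proof of Theorem~\ref{thm:mend} handles this with a two-case perturbation: if $j_{\El,U}(\eta(C))\neq 0$ one takes $c\in C\setminus pC$ with $j_{\El,U}(\eta(c))\neq 0$; otherwise one takes $x\in\tor(U^{\ab})$ with $j_{\El,U}(\eta(x))\neq 0$ and $c\in C\setminus pC$, and observes that $y=x+c$ generates a copy of $\Z_p$ splitting off $U^{\ab}$ while $j_{\El,U}(\eta(y))=j_{\El,U}(\eta(x))\neq 0$. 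With that lemma in hand the rest of your argument goes through as written, but as it stands your proposal assumes the one statement that carries the arithmetic content of the theorem.
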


In \cite{wz:stall} we will introduce the number of {\it $\Z_p$-ends}.
It will be shown that in this context there holds an analogue of Stalling's decomposition theorem
(cf. \cite[Thm.~B]{wz:stall}).
The proof of this decomposition
theorem will make use of Theorem C in an essential way.

The proof of Theorem~A and Theorem~C is based on several facts 
concerning {\it cohomological Mackey functors} for profinite groups
and, in particular, pro-$p$ groups. Since these techniques have never been
applied before in this context, a considerable part of this paper is dealing
with the machinery of cohomological Mackey functors.
The section cohomology groups introduced in section~\ref{s:seco}
will allow us to establish certain injectivity criteria for natural transformations
of cohomological Mackey functors for pro-$p$ groups (cf. \S\ref{ss:injcrit}).
These injectivity criteria will be an essential tool in the proof of Theorem C.


\section{Profinite modules}
\label{s:profmod}
For a prime number $p$ we denote by $\F_p$ the finite field
with $p$ elements, by $\Z_p$ the ring of {\it $p$-adic integers},
and by $\Q_p$ the field of {\it $p$-adic numbers}.

\subsection{Abelian pro-$p$ groups}
\label{ss:abpro}
By $\cvec$ and $\prfp$ we denote the abelian categories of profinite $\F_p$-vector spaces
and abelian pro-$p$ groups, respectively. These categories are full subcategories of the category of 
topological $\Z_p$-modues.
The Pontryagin duality functor $\argu^\vee=\Hom_{\Z_p}^{\cts}(\argu,\Q_p/\Z_p)$
is exact and
induces natural equivalences
\begin{equation}
\label{eq:pontryagin}
\begin{aligned}
\argu^\vee\colon&\cvec^{\op}\longrightarrow\pdis,\qquad&\argu^\vee\colon&\pdis^{\op}\longrightarrow\cvec,\\
\argu^\vee\colon&\prfp^{\op}\longrightarrow \zpdis,&\argu^\vee\colon&\zpdis^{\op}\longrightarrow\prfp,
\end{aligned}
\end{equation}
where $\pdis$ is the abelian category of $\F_p$-vector spaces, and $\zpdis$
denotes the abelian category of discrete $\Z_p$-torsion modules.
By $\fmod$ we denote the full subcategory of $\prfp$ the objects of which
are finitely generated $\Z_p$-modules.
From Pontryagin duality one concludes easily that $M\in \ob(\prfp)$ is projective if, and only if,
$M$ is torsion free. By $\btor(M)\subseteq M$ we denote the closure of the $\Z_p$-torsion elements in $M$,
i.e., $M$ is torsion free if, and only if, $\btor(M)=0$.

For $M\in \ob(\prfp)$ we put $M_{/p}=M/pM$. 
Then $\argu_{/p}\colon\prfp\to\cvec$
is an additive right exact functor, and
one has the following straightforward fact.

\begin{fact}
\label{fact:elemab}
Let $\phi\colon M\to Q$ be a continuous homomorphism of
abelian pro-$p$ groups. 
Then $\phi$ is surjective if, and only if, $\phi_{/p}\colon M_{/p}\to Q_{/p}$ is surjective.
\end{fact}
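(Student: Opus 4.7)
The forward direction is immediate from the right exactness of $\argu_{/p}$ (already noted in the paragraph preceding the statement): if $\phi$ is surjective then $\phi_{/p}$, being the map induced on cokernels of multiplication by $p$, is surjective as well.

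For the converse, the plan is a Nakayama-type argument tailored to the pro-$p$ setting. First I would observe that $\phi(M)$ is automatically closed in $Q$: since $M$ is compact (being a profinite group) and $\phi$ is continuous, the image $\phi(M)$ is compact, hence closed in the Hausdorff space $Q$. Consequently it suffices to prove that the quotient $\bar{Q}=Q/\phi(M)$, which is again an abelian pro-$p$ group, is trivial.

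Next I would translate the hypothesis that $\phi_{/p}$ is surjective into the statement $Q=\phi(M)+pQ$, and then pass to the quotient to obtain $\bar{Q}=p\bar{Q}$. The final step is to argue that an abelian pro-$p$ group on which multiplication by $p$ is surjective must vanish. For this I would use that $\bar{Q}$ has a basis of open neighbourhoods of $0$ consisting of open subgroups $V$ with $\bar{Q}/V$ a finite abelian $p$-group. The relation $\bar{Q}=p\bar{Q}$ descends to $\bar{Q}/V=p(\bar{Q}/V)$, and iterating gives $\bar{Q}/V=p^n(\bar{Q}/V)=0$ for $n$ large enough; hence $\bar{Q}=\varprojlim_V \bar{Q}/V=0$.

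There is no real obstacle here: the only subtle point is the topological one of knowing that $\phi(M)$ is closed, so that $\bar Q$ is a bona fide abelian pro-$p$ group to which the Nakayama-style inverse-limit argument applies. Alternatively, one could dualise via the Pontryagin equivalence in \eqref{eq:pontryagin}, translating the statement into the (dual) assertion that a morphism of discrete $p$-torsion $\Z_p$-modules is injective if and only if its restriction to $p$-torsion is injective, but the direct argument above seems shorter and more transparent.
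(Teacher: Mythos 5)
Your argument is correct, and it is exactly the standard ``topological Nakayama'' argument that the paper has in mind when it calls the statement a straightforward fact and omits the proof: compactness makes $\phi(M)$ closed, the hypothesis gives $\bar Q=p\bar Q$ for the pro-$p$ cokernel $\bar Q$, and passing to the finite quotients of $\bar Q$ forces $\bar Q=0$. Nothing is missing.
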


Let $A\in\ob(\prfp)$ be an abelian pro-$p$ group, and let
$\gr_\bullet(A)$ denote the graded $\F_p[t]$-module
associated to the p-adic filtration $(p^k.A)_{k\geq 0}$.
Then every homogeneous component $\gr_k(A)$ is
a compact $\F_p$-vector space, i.e., $\gr_k(A)\in\ob(\cvec)$ for all $k\geq 0$.
Moreover, $A$ is torsion free if, and only if, $\gr_\bullet(A)$
is a free $\F_p[t]$-module.

Let $\phi\colon A\to B$ be a homomorphism of 
abelian pro-$p$ groups.
Then $\phi$ induces a homomorphism of $\F_p[t]$-modules
$\gr_\bullet(\phi)\colon\gr_\bullet(A)\to\gr_\bullet(B)$, and
$\gr_k(\phi)\colon\gr_k(A)\to\gr_k(B)$ is continuous.
One has the following fact.

\begin{fact}
\label{fact:elemab2}
Let $\phi\colon A\to B$ be a homomorphism of finitely generated, torsion free
$\Z_p$-modules.
Then $\phi$ is split-injective if, and only if, $\gr_0(\phi)\colon\gr_0(A)\to
\gr_0(B)$ is injective. 
\end{fact}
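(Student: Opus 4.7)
The plan is to reduce to the standard fact that a morphism of finitely generated free $\Z_p$-modules is split-injective if and only if its cokernel is torsion-free. Since $\Z_p$ is a discrete valuation ring, the hypothesis that $A$ and $B$ are finitely generated torsion-free $\Z_p$-modules forces them to be free of finite rank, so $\gr_0(A)=A/pA$ and $\gr_0(B)=B/pB$ are finite-dimensional $\F_p$-vector spaces. The implication $(\Rightarrow)$ is then immediate: a splitting $B\simeq\phi(A)\oplus C$ in $\fmod$ reduces modulo $p$ to a splitting of $\gr_0(\phi)$, which in particular is injective.

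For the converse I would first check that $\phi$ itself is injective. Since $A$ is torsion free, the associated graded $\gr_\bullet(A)$ is a free $\F_p[t]$-module on $\gr_0(A)$, as recalled in the paragraph preceding the statement, and similarly for $B$. A direct verification shows that $\gr_\bullet(\phi)$ is the $\F_p[t]$-linear extension of $\gr_0(\phi)$, so injectivity of $\gr_0(\phi)$ implies injectivity of $\gr_\bullet(\phi)$. A standard filtration argument --- given $x\in\kernel(\phi)\setminus\{0\}$, choose $k$ with $x\in p^kA\setminus p^{k+1}A$ and note that the class $[x]\in\gr_k(A)$ would lie in $\kernel(\gr_k(\phi))$, a contradiction --- then shows that $\phi$ is injective.

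Having established injectivity, form the short exact sequence $0\to A\xrightarrow{\phi}B\to C\to 0$ in $\fmod$ with $C=\coker(\phi)$. Applying $\argu\otimes_{\Z_p}\F_p$ via the flat resolution $0\to\Z_p\xrightarrow{p}\Z_p\to\F_p\to 0$ yields the four-term exact sequence
$$0\longrightarrow\Tor_1^{\Z_p}(C,\F_p)\longrightarrow A/pA\overset{\gr_0(\phi)}{\longrightarrow} B/pB\longrightarrow C/pC\longrightarrow 0,$$
in which $\Tor_1^{\Z_p}(C,\F_p)$ is computed to be the $p$-torsion subgroup of $C$. Injectivity of $\gr_0(\phi)$ therefore forces this torsion subgroup to vanish, so $C$ is torsion-free and hence free as a finitely generated $\Z_p$-module; the sequence then splits and $\phi$ is split-injective. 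There is no serious obstacle in this plan: the only point that demands care is the preliminary step of lifting injectivity from $\gr_0(\phi)$ to $\phi$, which is handled by the free $\F_p[t]$-module structure on $\gr_\bullet(A)$ recorded just before the statement.
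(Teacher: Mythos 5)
Your proof is correct, and it fills in an argument for a statement the paper records as a ``Fact'' without proof. Both halves check out: the reduction of a splitting modulo $p$ gives $(\Rightarrow)$; and for $(\Leftarrow)$ your two-step argument --- first lifting injectivity from $\gr_0(\phi)$ to $\phi$ via the free $\F_p[t]$-module structure on $\gr_\bullet(A)$ together with $\bigcap_k p^kA=0$, then identifying $\Tor_1^{\Z_p}(\coker(\phi),\F_p)$ with the $p$-torsion of the cokernel to conclude that $\coker(\phi)$ is free and the sequence splits --- is exactly the route suggested by the paper's preceding discussion of the $p$-adic filtration, with all details (freeness of finitely generated torsion-free $\Z_p$-modules, vanishing of $\Tor_1(B,\F_p)$, torsion-free $=$ $p$-torsion-free for finitely generated $\Z_p$-modules) correctly supplied.
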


\subsection{Profinite modules for profinite groups}
\label{ss:profprof}
Let $G$ be a profinite group. Then
\begin{equation}
\label{eq:compgralg}
\FpG=\textstyle{\varprojlim_{U\triangleleft_\circ G}\F_p[G/U]}\qquad\text{and}\qquad
\ZpG=\textstyle{\varprojlim_{U\triangleleft_\circ G}\Z_p[G/U]},
\end{equation}
where the inverse limits are running over all open normal subgroups of $G$,
will denote the {\it completed $\F_p$-algebra} and {\it completed $\Z_p$-algebra}
of $G$, respectively. 
By $\FGprf$ we denote the abelian category of
profinite\footnote{In this context the word profinite
has to be read as pro(finite and discrete).} left
$\FpG$-modules, and by $\ZGprf$ the abelian category of
profinite left $\ZpG$-modules.
Note that $\FGprf$ is the Pontryagin dual of  
$\FGdis$, the abelian category of discrete left $\FpG$-modules,
and that $\ZGprf$ is the
Pontryagin dual of $\ZGdis$, the abelian category of discrete
left $\ZpG$-modules. The categories $\FGdis$ and $\ZGdis$
are closed with respect to direct limits. Moreover, the proof
given in \cite[\S III.11]{mcl:hom} can be transferred verbatim in order to show
that they admit minimal injective envelopes. Thus by Pontryagin duality one has
the following.

\begin{fact}
\label{fact:minproj}
The abelian categories $\FGprf$ and $\ZGprf$ are closed with respect
to inverse limits and admit minimal projective covers.
\end{fact}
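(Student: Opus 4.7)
The plan is to reduce both assertions to Pontryagin duality together with the corresponding statements for $\FGdis$ and $\ZGdis$ invoked in the text immediately preceding this fact. First I would observe that the duality functor $\argu^\vee=\Hom^{\cts}_{\Z_p}(\argu,\Q_p/\Z_p)$ extends from abelian pro-$p$ groups to the module categories, yielding exact contravariant equivalences $\FGprf^{\op}\simeq\FGdis$ and $\ZGprf^{\op}\simeq\ZGdis$. Once one checks that the Pontryagin dual of a profinite $\FpG$-module naturally carries a discrete $\FpG$-module structure (with the twisted action $(g\cdot\phi)(m)=\phi(g^{-1}m)$), and vice versa, exactness is inherited from the abelian versions recorded in \eqref{eq:pontryagin}.

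Closure under inverse limits is then automatic. Given an inverse system $(M_i)_{i\in I}$ in $\FGprf$, its dual $(M_i^\vee)_{i\in I}$ is a direct system in $\FGdis$, which admits a direct limit $L$ by hypothesis. The module $L^\vee$ realises $\varprojlim_i M_i$ in $\FGprf$, because the exact contravariant equivalence converts colimits into limits. The same argument with $\F_p$ replaced by $\Z_p$ handles $\ZGprf$.

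For the existence of minimal projective covers, given $M\in\ob(\FGprf)$, I would choose a minimal injective envelope $\iota\colon M^\vee\hookrightarrow E$ in $\FGdis$ (whose existence is the assumption taken from \cite[\S III.11]{mcl:hom}) and dualise to obtain an epimorphism $\iota^\vee\colon E^\vee\twoheadrightarrow M$ in $\FGprf$. Since the equivalence interchanges injectives and projectives, $E^\vee$ is projective in $\FGprf$, so $\iota^\vee$ is a projective cover of $M$; the analogous argument works in $\ZGprf$.

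The principal bookkeeping issue, and the only step requiring genuine attention, is to verify that ``minimal'' dualises correctly. A projective cover $\pi\colon P\twoheadrightarrow M$ is minimal precisely when $\kernel(\pi)$ is a superfluous subobject of $P$, and this condition is the categorical dual of the condition that $\iota(M^\vee)$ be essential in $E$, which is the definition of a minimal injective envelope. Hence the Pontryagin dual of a minimal injective envelope is a minimal projective cover, and all remaining verifications are routine once the duality is in place.
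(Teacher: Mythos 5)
Your argument is correct and is precisely the one the paper intends: the statement is deduced by Pontryagin duality from the closure of $\FGdis$ and $\ZGdis$ under direct limits and from the existence of minimal injective envelopes there (transferred from \cite[\S III.11]{mcl:hom}). Your additional check that minimality dualises (superfluous kernel versus essential subobject) is exactly the routine verification the paper leaves implicit.
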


Let $M\in\ob(\FGprf)$ be a profinite left $\FpG$-module, and let $\rad(M)$ denote
the intersection of all maximal closed submodules of $M$. Then $\hd(M)=M/\rad(M)$
- the {\it head of $M$} - is the Pontryagin dual of the socle of the Pontryagin dual of $M$.
Moreover, $\hd\colon \FGprf\to\FGprf$ is an additive right exact functor.
By the usual standard argument (cf. \cite[proof of Cor. 2.5.4]{ben:coh1}) one concludes the following.

\begin{fact}
\label{fact:trivdiff}
Let $M\in\ob(\FGprf)$, and let
$(P_\bullet,\der_\bullet,\eps_M)$, $\eps_M\colon P_0\to M$, be a minimal projective
resolution of $M$ in $\FGprf$. Then $(\hd(P_\bullet),\hd(\der_\bullet))$ is a chain complex
with $0$-differentials, i.e., $\hd(\der_k)=0$ for all $k\in\Z$.
\end{fact}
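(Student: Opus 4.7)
The plan is to show that $\der_k(P_k)\subseteq\rad(P_{k-1})$ for every $k$, which is precisely the condition making the induced map $\hd(\der_k)\colon P_k/\rad(P_k)\to P_{k-1}/\rad(P_{k-1})$ vanish. The statement is empty for $k\leq 0$ since $P_k=0$ in a resolution, so the content is at $k\geq 1$.

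By Fact~\ref{fact:minproj} projective covers exist in $\FGprf$, and a minimal projective resolution is by definition one in which $\eps_M\colon P_0\to M$ is a projective cover and, for each $k\geq 1$, the canonical surjection $\bar\der_k\colon P_k\twoheadrightarrow\ker(\der_{k-1})$ obtained by corestricting $\der_k$ (with the convention $\ker(\der_0):=\ker(\eps_M)$) is again a projective cover. The auxiliary ingredient I would isolate first is that in $\FGprf$ the kernel of any projective cover $\pi\colon P\twoheadrightarrow N$ is contained in $\rad(P)$. Indeed, if $\ker(\pi)$ were not contained in some maximal closed submodule $Q\subsetneq P$, then $Q+\ker(\pi)=P$ by maximality of $Q$, hence $\pi(Q)=N$; this contradicts the defining property of a projective cover that no proper closed submodule of $P$ surjects onto $N$ via $\pi$.

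Granting this, exactness of the resolution gives $\der_k(P_k)=\ker(\der_{k-1})$ for every $k\geq 1$, and the auxiliary statement applied to the projective cover $\bar\der_{k-1}\colon P_{k-1}\twoheadrightarrow\ker(\der_{k-2})$ (respectively to $\eps_M$ when $k=1$) places $\ker(\der_{k-1})$ inside $\rad(P_{k-1})$. Therefore $\der_k(P_k)\subseteq\rad(P_{k-1})$ and $\hd(\der_k)=0$, as required.

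The only non-routine point is the auxiliary statement about projective covers and the radical in $\FGprf$, which is the topological analogue of the usual Nakayama-type lemma; this is what has to be checked carefully, although no genuine difficulty arises. An alternative route that bypasses the direct verification is to Pontryagin dualize the resolution to $\FGdis$: minimal projective covers in $\FGprf$ correspond to minimal injective envelopes in $\FGdis$, and $\hd$ corresponds to $\soc$. The desired vanishing of the head-level differentials then becomes the classical statement that the socle-level maps of a minimal injective resolution are zero, recovered by the standard argument the statement itself refers to.
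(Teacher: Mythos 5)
Your argument is correct and is precisely the ``usual standard argument'' the paper invokes by citing \cite[proof of Cor.~2.5.4]{ben:coh1}: the kernel of a projective cover is superfluous, hence contained in the radical, so $\der_k(P_k)=\kernel(\der_{k-1})\subseteq\rad(P_{k-1})$ and $\hd(\der_k)=0$. The dual route via minimal injective resolutions in $\FGdis$ that you mention is likewise consistent with how the paper sets up Fact~\ref{fact:minproj}, so there is nothing to add.
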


Let $(P_\bullet,\der_\bullet,\eps_M)$, $\eps_M\colon P_0\to M$,
be a projective resolution of $M$ in $\FGprf$. The minimal number $n\in\N_0\cup\{\infty\}$
such that $P_{n+j}=0$ for all $j\geq 1$ is called the {\it length} of $(P_\bullet,\der_\bullet,\eps_M)$
and will be denoted by $\ell(P_\bullet,\der_\bullet,\eps_M)$.
The {\it projective dimension} of $M$ is defined by
\begin{equation}
\label{eq:pdimdef}
\pdim(M)=\min\{\,\ell(P_\bullet,\der_\bullet,\eps_M)\mid (P_\bullet,\der_\bullet,\eps_M)\ 
\text{projective resolution of $M$}\,\}.
\end{equation}
This number coincides with the length of a minimal projective resolution of $M$.
For further details concerning profinite $\FpG$- and $\ZpG$-modules
the reader may consult \cite{brum:pseudo}, \cite{NSW:cohnum}, \cite{ribzal:prof} and \cite{weisym:hori}.

\subsection{Homology}
\label{ss:homo}
Let $\prfFG$ and $\prfZG$ denote the abelian categories
of right profinite $\FpG$-modules and right profinite $\ZpG$-modules,
respectively. The {\it completed tensor product}
\begin{equation}
\label{compten}
\begin{aligned}
\argu\hotimes_G\argu\colon&\prfFG\times\FGprf\longrightarrow\cvec,\\
\argu\hotimes_G\argu\colon&\prfZG\times\ZGprf\longrightarrow\prfp,
\end{aligned}
\end{equation}
introduced by A.~Brumer in \cite{brum:pseudo} is a
right/right-exact additive bifunctor. Their left derived functors
will be denoted by $\Tor^G_k(\argu,\argu)$, $k\geq 0$. 
{\it Homology} with coefficients in the profinite left $\ZpG$-module $M$ is defined
by
\begin{equation}
\label{eq:homodef}
H_\bullet(G,M)=\Tor_{\bullet}^G(\Z_p,M).
\end{equation}
Let $G$ be a pro-$p$ group. Then $\F_p\in\ob(\FGprf)$ - the trivial left $\FpG$-module - 
is the only simple profinite left $\FpG$-module up to isomorphism.
In particular, for any profinite left $\FpG$-module $M$ one has a canonical isomorphism
$\hd(M)\simeq \F_p\hotimes_G M$ of profinite left $\FpG$-modules.
Thus from Fact~\ref{fact:trivdiff} one concludes the following.

\begin{fact}
\label{fact:trivdiff2}
Let $G$ be a pro-$p$ group, let $M\in\ob(\FGprf)$, and let
$(P_\bullet,\der_\bullet,\eps_M)$, $\eps_M\colon P_0\to M$, be a minimal projective
resolution of $M$ in $\FGprf$. Then the chain complex 
$(\F_p\hotimes_G P_\bullet,\F_p\hotimes_G\der_\bullet))$
has $0$-differentials, i.e., $\F_p\hotimes_G\der_k=0$ for all $k\in\Z$.
\end{fact}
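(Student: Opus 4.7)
The plan is to reduce the statement to the preceding Fact~\ref{fact:trivdiff} by identifying the functor $\F_p\hotimes_G(\argu)$ with the head functor $\hd(\argu)$ on $\FGprf$, which is precisely the canonical isomorphism $\hd(M)\simeq\F_p\hotimes_G M$ noted just before the statement.

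First, I would verify this canonical isomorphism and its naturality in $M$. Writing $I_G\subseteq\FpG$ for the closed augmentation ideal, one has $\F_p\hotimes_G M=M/\overline{I_G M}$. Since $G$ is pro-$p$, the trivial module $\F_p$ is the unique simple object of $\FGprf$, so every simple quotient of $M$ is annihilated by $I_G$; this forces $\overline{I_G M}\subseteq\rad(M)$. Conversely, $M/\overline{I_G M}$ carries the trivial $G$-action, so it is a profinite $\F_p$-vector space and hence semisimple in $\FGprf$ (its maximal closed submodules are the preimages of the coordinate hyperplanes of the ambient $\F_p$-vector space). This gives $\rad(M)\subseteq\overline{I_G M}$, whence the equality $\rad(M)=\overline{I_G M}$. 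The resulting isomorphism $\hd(M)\simeq\F_p\hotimes_G M$ is manifestly natural in $M$, so we obtain a natural equivalence of additive right exact functors $\hd(\argu)\simeq\F_p\hotimes_G(\argu)$ on $\FGprf$.

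Second, I would apply this natural equivalence termwise to the minimal projective resolution $(P_\bullet,\der_\bullet,\eps_M)$. Naturality yields an isomorphism of chain complexes between $(\F_p\hotimes_G P_\bullet,\F_p\hotimes_G\der_\bullet)$ and $(\hd(P_\bullet),\hd(\der_\bullet))$. Fact~\ref{fact:trivdiff} states that the differentials of the latter all vanish, hence so do those of the former.

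The only step requiring any real care is the identification $\rad(M)=\overline{I_G M}$, which crucially uses the pro-$p$ hypothesis to ensure that $\F_p$ exhausts the simples in $\FGprf$; after that is in place, Fact~\ref{fact:trivdiff2} follows formally from Fact~\ref{fact:trivdiff}, and I do not foresee any further obstacle.
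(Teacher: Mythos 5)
Your proposal is correct and follows exactly the route the paper takes: the paper derives Fact~\ref{fact:trivdiff2} from Fact~\ref{fact:trivdiff} via the canonical natural isomorphism $\hd(M)\simeq\F_p\hotimes_G M$, which it justifies by noting that $\F_p$ is the unique simple object of $\FGprf$ when $G$ is pro-$p$. Your verification of $\rad(M)=\overline{I_GM}$ merely fills in a detail the paper leaves implicit.
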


\subsection{Continuous cochain cohomology groups}
\label{ss:cts}
Let $G$ be a profinite group. For $M\in\ob(\ZGprf)$ we denote by
$\Ext^k_G(\argu,M)$ the left derived functors of $\Hom_G(\argu,M)$.
Then
\begin{equation}
\label{eq:cts0}
H^k_{\cts}(G,M)=\Ext^k_G(\Z_p,M)
\end{equation}
coincides with the {\it $k^{th}$-continuous cochain cohomology group}
with coefficients in the topological $\ZpG$-module $M$.
These cohomology groups were introduced by J.~Tate in \cite{tate:rel}.
In general, the functors $H^k_{\cts}(G,\argu)$ will not commute with
inverse limits. Nevertheless, one has the following property
for countable inverse limits.

\begin{prop}
\label{prop:cts}
Let $G$ be a profinite group, and let
$M=\varprojlim_{i\geq 0} M_i$ with $M$ and $M_i$, $i\geq 0$,
profinite left $\ZpG$-modules.
Then one has a short exact sequence
\begin{equation}
\label{eq:cts1}
\xymatrix{
0\ar[r]&\limone H^{k-1}_{\cts}(G,M_i)\ar[r] 
&H^k_{\cts}(G,M)\ar[r]&\textstyle{\varprojlim_{i\geq 0} H^k_{\cts}(G,M_i)}\ar[r]&0.
}
\end{equation}
for all $k\geq 1$.
\end{prop}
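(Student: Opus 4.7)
The approach I would take is to compute $H^k_{\cts}(G,\argu)=\Ext^k_G(\Z_p,\argu)$ using a single projective resolution of the trivial left $\ZpG$-module, and then reduce the claim to the Milnor-type $\varprojlim^1$ short exact sequence for the cohomology of a tower of cochain complexes whose transition maps are termwise surjective.

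The first step is to reduce to the case where the transition maps $\pi_{i+1,i}\colon M_{i+1}\to M_i$ are all surjective. For this one replaces each $M_i$ by the closed submodule $M_i'=\bigcap_{j\geq i}\image(\pi_{j,i})\subseteq M_i$; this cofinal replacement has the same inverse limit $M$, and a standard compactness argument---applied to the decreasing family of non-empty closed subsets $\pi_{i+1,i}^{-1}(n)\cap\image(\pi_{j,i+1})$ of the compact space $M_{i+1}$, for fixed $n\in M_i'$---shows that the induced transition maps $M_{i+1}'\to M_i'$ are surjective for every $i\geq 0$. After this reduction I may assume that $\pi_{i+1,i}$ is surjective for all $i$.

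Next, I pick a projective resolution $(P_\bullet,\der_\bullet,\eps)$ of the trivial left $\ZpG$-module $\Z_p$ in $\ZGprf$ (which exists by Fact~\ref{fact:minproj}). By definition, $H^k_{\cts}(G,N)=H^k(\Hom_G(P_\bullet,N))$ for every $N\in\ob(\ZGprf)$. Since $\Hom_G(P_k,\argu)$ commutes with arbitrary inverse limits in $\ZGprf$, one has a canonical isomorphism of cochain complexes
\[
\Hom_G(P_\bullet,M)\;\simeq\;\textstyle{\varprojlim_{i\geq 0}}\,\Hom_G(P_\bullet,M_i);
\]
moreover, since each $P_k$ is a projective profinite $\ZpG$-module and each $\pi_{i+1,i}$ is surjective, the induced transition maps $\Hom_G(P_k,M_{i+1})\to\Hom_G(P_k,M_i)$ are surjective for every $k\geq 0$.

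Finally, I would invoke the standard Milnor $\varprojlim^1$ short exact sequence for the cohomology of an inverse limit of a countable tower of cochain complexes of abelian groups with termwise surjective transition maps (cf.~\cite[Thm.~3.5.8]{weib:hom}). Applied to the tower $(\Hom_G(P_\bullet,M_i))_{i\geq 0}$, it yields precisely the asserted short exact sequence for every $k\geq 1$. The only delicate point in this strategy is the reduction to surjective transitions; once that is in place, the rest is a routine combination of the standard Milnor machinery with the observation that $\Hom_G(P_k,\argu)$ preserves surjections in $\ZGprf$ because $P_k$ is projective.
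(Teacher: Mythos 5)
Your overall strategy --- computing $H^\bullet_{\cts}(G,\argu)$ from a fixed projective resolution $(P_\bullet,\der_\bullet,\eps)$ of $\Z_p$ in $\ZGprf$ and invoking the Milnor $\varprojlim^1$-sequence for a tower of cochain complexes --- is reasonable, and the individual ingredients you list are correct: $\Hom_G(P_q,\argu)$ commutes with inverse limits and preserves surjections, the subgroups $M_i'=\bigcap_{j\geq i}\image(\pi_{j,i})$ are closed submodules, the compactness argument makes the transitions $M_{i+1}'\to M_i'$ surjective, and $\varprojlim_i M_i'=M$. The genuine gap is that this reduction changes the tower. The Milnor sequence you obtain at the end computes $\limone H^{k-1}_{\cts}(G,M_i')$ and $\varprojlim_{i}H^k_{\cts}(G,M_i')$, whereas the proposition asserts the sequence with the \emph{original} tower $(M_i)$. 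The inclusion of towers $(M_i')\hookrightarrow(M_i)$ is a pro-isomorphism only when the images $\image(\pi_{j,i})$ stabilize for each $i$ (Mittag--Leffler), which need not happen; for a non-ML tower there is no general principle identifying $\varprojlim$ and $\varprojlim^1$ of the two towers of cohomology groups. Indeed, comparing the two Milnor sequences (both with middle term $H^k_{\cts}(G,M)$) via the snake lemma only yields that the map on $\varprojlim^1$-terms is injective and the map on $\varprojlim$-terms is surjective, with isomorphic kernel and cokernel --- not that either is an isomorphism. As written, you have proved a variant of the statement, not the statement.

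The paper sidesteps this by never touching the tower: because the $M_i$ are compact, the telescope sequence
\begin{equation*}
\xymatrix{0\ar[r]&M\ar[r]&\textstyle{\prod_{i\geq 0}M_i}\ar[r]^{\delta}&\textstyle{\prod_{i\geq 0}M_i}\ar[r]&0,}\qquad \delta((m_i))_j=m_j-\alpha_{j+1}(m_{j+1}),
\end{equation*}
is exact in $\ZGprf$, and the long exact cohomology sequence together with $H^k_{\cts}(G,\prod_i M_i)\simeq\prod_i H^k_{\cts}(G,M_i)$ gives \eqref{eq:cts1} directly, since $\kernel(H^k(\delta))$ and $\coker(H^k(\delta))$ are by definition $\varprojlim$ and $\varprojlim^1$ of the original system. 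If you prefer to keep your resolution-based formulation, the correct move is not to modify the tower but to verify that $\limone\Hom_G(P_q,M_i)=0$ for every $q$: apply $\Hom_G(P_q,\argu)$ to the telescope sequence above (projectivity of $P_q$ preserves its exactness, and $\Hom_G(P_q,\argu)$ commutes with products), and then use the version of the Milnor lemma that requires only the degreewise vanishing of $\varprojlim^1$ of the complexes rather than termwise surjectivity. Either way one ends up reproducing the paper's telescope argument, so the surjectivity reduction is both insufficient and unnecessary.
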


\begin{proof} Let $\alpha_i\colon M_i\to M_{i-1}$ denote the maps in the inverse system.
As $\ZGprf$ has exact inverse limits, one has a short exact sequence
\begin{equation}
\label{eq:cts2}
\xymatrix{
0\ar[r]&M\ar[r]&\textstyle{\prod_{i\geq 0} M_i}\ar[r]^\delta&
\textstyle{\prod_{i\geq 0} M_i}\ar[r]&0,}
\end{equation}
where $\delta((m_i))_j=m_{j}-\alpha_{j+1}(m_{j+1})$.
Since $\prod$ commutes with (co-)homology, and as
one has a natural isomorphism
\begin{equation}
\label{eq:cts3}
\textstyle{\Hom_G(Q,\prod_{i\geq 0} M_i)\simeq\prod_{i\geq 0}\Hom_G(Q,M_i),}
\end{equation}
the long exact sequence associated to \eqref{eq:cts2}
can be written as
\begin{equation}
\label{eq:cts4}
\xymatrix{
\prod_{i\geq 0}H^{k-1}(G,M_i)\ar[r]&
H^k(G,M)\ar[r]&
\prod_{i\geq 0}H^{k}(G,M_i)\ar[r]^{H^k(\delta)}&
\prod_{i\geq 0}H^{k}(G,M_i)
}
\end{equation}
Since $\kernel(H^k(\delta))=\varprojlim_{i\geq 0} H^k(G,M_i)$ and
as $\coker(H^k(\delta))=\limone H^k(G,M_i)$, this yields the claim.
\end{proof}

\begin{rem}
\label{rem:cts}
(a) Since $M_i$ are abelian pro-$p$ groups,
$H^0_{\cts}(G,M_i)=M_i^G$ are also abelian pro-$p$ groups.
Hence $\limone H^0(G,M_i)=0$, and
\begin{equation}
\label{eq:cts5}
\xymatrix{
H^1_{\cts}(G,M)\ar[r]&\textstyle{\varprojlim_{i\geq 0} H^1_{\cts}(G,M_i)}
}
\end{equation}
is an isomorphism.

\noindent
(b) Suppose that $G$ is a finitely generated profinite group.
Then $|H^1_{\cts}(G,A)|<\infty$ for every finite, discrete left
$\ZGprf$-module $A$. By (a), for every
countably based profinite left $\ZGprf$-module $M$, 
the $\Z_p$-module $H^1_{\cts}(G,M)$ carries 
naturally the structure of an abelian pro-$p$ group.
Moreover, if $\phi\colon M\to Q$ is a continuous homomorphism
of countably based profinite left $\ZGprf$-modules, then
$H^1_{\cts}(\phi)\colon H^1_{\cts}(G,M)\to H^1_{\cts}(G,Q)$ is continuous.
Thus, if $M=\varprojlim_{i\geq 0} M_i$, where $M_i$ are countably based
profinite left $\ZGprf$-modules, then $\limone H^1(G,M_i)=0$.
Hence for such a $\ZGprf$-module $M$ one has an isomorphism
\begin{equation}
\label{eq:cts6}
\xymatrix{
H^2_{\cts}(G,M)\ar[r]&\textstyle{\varprojlim_{i\geq 0} H^2_{\cts}(G,M_i)}
}
\end{equation}
\end{rem}

Let $G$ be a profinite group, and let $\tau\colon \tG\to G$
be an extension of $G$ by some abelian pro-$p$ group $K=\kernel(\tau)$,
i.e., one has a short exact sequence of profinite groups
\begin{equation}
\label{eq:seqh2}
\bos\colon\qquad\xymatrix{
\triv\ar[r]&K\ar[r]^{\iota}&\tG\ar[r]_{\tau}\ar[r]&G\ar[r]\ar@{-->}@/_/[l]_\theta&\triv}
\end{equation}
and $K\in\ob(\ZGprf)$. By \cite[Prop.~I.1.2]{ser:gal}, there exists a section
$\theta\colon G\to\tG$ in the category of profinite sets,
i.e., $\theta$ is a continuous mapping of profinite sets satisfying $\tau\circ\theta=\iid_G$.
Hence using the same procedure as for discrete groups (cf. \cite[\S IV.3]{brown:coh}),
one can show that one has a (canonical) one-to-one correspondence
between the equivalence classes of short exact sequences of profinite groups $[s]$
(cf. \eqref{eq:seqh2}) and elements in the group $H^2_{\cts}(G,K)$.
So, if convenient, we will identity the equivalence class of short exact sequences $[s]$
with its corresponding element in $H^2_{\cts}(G,K)$.
From Remark~\ref{rem:cts}(b) one concludes the following property.

\begin{prop}
\label{prop:splitex}
Let $G$ be a finitely generated profinite group, 
and let $\tau\colon \tG\to G$
be an extension of $G$ by some abelian pro-$p$ group $K=\kernel(\tau)$,
which is a countably based profinite left $\ZGprf$-module.
Let $\{\,K_i\mid i\geq 1\,\}$ be a decreasing countable basis of $K$
consisting of open $\ZGprf$-modules. Suppose that
the induced extensions $\tau_i\colon \tG/K_i\to G$ split for all
$i\geq 1$. Then $\tau\colon\tG\to G$ is a split extension.
\end{prop}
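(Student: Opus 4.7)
The plan is to encode the splitting of $\tau$ as the vanishing of a class in $H^2_{\cts}(G,K)$ and to compute that group via the countable inverse system $K\simeq\varprojlim_{i\geq 1} K/K_i$ in $\ZGprf$, exploiting the $\limone$-vanishing already packaged in Remark~\ref{rem:cts}(b). By the correspondence between extension classes and elements of $H^2_{\cts}(G,K)$ recalled just before the statement, $\tau$ determines a class $[\bos]\in H^2_{\cts}(G,K)$ whose vanishing is equivalent to $\tau$ being split; and by the naturality of this correspondence in the coefficient module, if $\pi_i\colon K\to K/K_i$ denotes the projection, then the induced (pushout) extension $\tau_i\colon\tG/K_i\to G$ is classified by $H^2_{\cts}(G,\pi_i)([\bos])$. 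The hypothesis that each $\tau_i$ splits therefore reads exactly as $H^2_{\cts}(G,\pi_i)([\bos])=0$ for all $i\geq 1$.

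To finish I would invoke Remark~\ref{rem:cts}(b), whose hypotheses are all in place here: $G$ is finitely generated profinite, $K$ is countably based by assumption, and each quotient $K/K_i$ is finite (hence trivially countably based) because $K_i$ is open in $K$. The remark then yields the isomorphism
\begin{equation*}
H^2_{\cts}(G,K)\;\simeq\;\textstyle{\varprojlim_{i\geq 1}} H^2_{\cts}(G,K/K_i),
\end{equation*}
under which $[\bos]$ corresponds to the coherent family $\bigl(H^2_{\cts}(G,\pi_i)([\bos])\bigr)_{i\geq 1}$; this family is identically zero by the previous step, so $[\bos]=0$ and $\tau$ admits a continuous splitting.

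The only delicate ingredient is the vanishing of $\textstyle{\varprojlim^{1}_{i\geq 1}} H^1_{\cts}(G,K/K_i)$ needed to upgrade the Milnor-type short exact sequence of Proposition~\ref{prop:cts} to the displayed isomorphism; but this is precisely what the finite generation of $G$ buys us via Remark~\ref{rem:cts}(b), so no new work is required beyond verifying the hypotheses of that remark. Everything else is a routine translation between $2$-cocycles, pushouts and extension classes.
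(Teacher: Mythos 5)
Your argument is correct and coincides with the paper's own proof: both identify the extension with a class $[\bos]\in H^2_{\cts}(G,K)$, observe that the splitting of each $\tau_i$ forces the image of $[\bos]$ in $H^2_{\cts}(G,K/K_i)$ to vanish, and then conclude $[\bos]=0$ from the isomorphism $H^2_{\cts}(G,K)\simeq\varprojlim_{i\geq 1}H^2_{\cts}(G,K/K_i)$ supplied by Remark~\ref{rem:cts}(b). Your extra check that each $K/K_i$ is finite, hence countably based, is a correct verification of that remark's hypotheses.
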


\begin{proof} Let $\rho_i\colon H^2_{\cts}(G,K)\to H^2_{\cts}(G,K/K_i)$
denote the canonical maps.
By Remark~\ref{rem:cts}(b), the map
\begin{equation}
\label{eq:cts7}
\textstyle{\rho=\prod_{i\geq 1}\rho_i\colon H^2_{\cts}(G,K)\longrightarrow
\varprojlim_{i\geq 1} H^2_{\cts}(G,K/K_i)}
\end{equation}
is an isomorphism. Let $\bos$ denote the short exact sequence associated to $\tau$
(cf. \eqref{eq:seqh2}). By hypothesis, $\rho_i([\bos])=0$.
Hence $\rho([s])=0$, and $\tau\colon\tG\to G$ is split surjective.
\end{proof}

\subsection{Profinite modules for finite $p$-groups}
\label{ss:proffinp}
Let $G$ be a finite $p$-group, and let $M\in\ob({}_{\F_p[G]}\prf)$.
Then every element $g\in G$ acts on $M$ by a continuous automorphism. In particular,
$\soc(M)=M^G$ is a closed $\F_p[G]$-submodule of $M$.
Moreover, if $M$ is non-trivial, then $\soc(M)$ is non-trivial. From this property one concludes
the following.

\begin{fact}
\label{fact:soctriv}
Let $G$ be a finite $p$-group, and let $M\in \ob({}_{\F_p[G]}\prf)$ be a non-trivial,
profinite left $\F_p[G]$-module. Let $B\subseteq M$ be a closed $\F_p[G]$-submodule
satisfying $B\cap\soc_G(M)=0$. Then $B=0$.
\end{fact}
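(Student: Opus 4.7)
The plan is a one-line contrapositive argument, leveraging the observation immediately preceding the statement: for a finite $p$-group $G$, every non-trivial profinite left $\F_p[G]$-module has non-trivial socle (i.e., non-trivial $G$-fixed points).

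First I would note that $B$, being a closed $\F_p[G]$-submodule of $M$, is itself a profinite left $\F_p[G]$-module. Assume for contradiction that $B \neq 0$. Then by the observation cited above, applied to $B$ in place of $M$, one has $\soc_G(B) = B^G \neq 0$.

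Next, since the formation of $G$-fixed points is monotone with respect to inclusion of closed submodules, $B^G \subseteq M^G = \soc_G(M)$. In particular $B^G \subseteq B \cap \soc_G(M)$, and since $B^G \neq 0$ this contradicts the hypothesis $B \cap \soc_G(M) = 0$. Hence $B = 0$.

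There is no real obstacle: the only thing being used is that simple objects of ${}_{\F_p[G]}\prf$ (for a finite $p$-group $G$) are all isomorphic to the trivial module $\F_p$, so the socle agrees with the fixed-point submodule, and that a non-trivial closed submodule of a compact $\F_p[G]$-module still admits a non-trivial fixed vector. Both facts are already recorded in the preceding paragraph, so the proof is essentially a direct application.
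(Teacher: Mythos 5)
Your argument is correct and is exactly the deduction the paper intends (the paper leaves it implicit after noting that $\soc(M)=M^G$ and that non-trivial profinite $\F_p[G]$-modules have non-trivial socle): a non-zero closed submodule $B$ is itself a non-trivial profinite $\F_p[G]$-module, so $B^G\neq 0$, while $B^G\subseteq B\cap M^G=0$. No issues.
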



\section{Cohomological Mackey functors for profinite groups}
\label{s:mackey}
The notion of {\it Mackey functors} for a finite group was introduced by A.~Dress
in \cite{dress:mac}. The theory of Mackey functors
has been developed and applied to the representation theory 
a finite group by several authors
(cf. \cite{bouc:green}, \cite{jt:mac}, \cite{tw:mac2}, \cite{tw:mac3}, 
\cite{blth:mackey}, \cite{pw:macguide}).
Cohomological Mackey functors satisfy the additional identity (cMF$_7$),
which is responsable for the existence of an intrinsic cohomology theory, the {\it section cohomology}
which will be introduced in section \ref{s:seco}.
The definition of a cohomological Mackey functor for a profinite $G$ is in analogy to
the definition for a finite group. However, new phenomena arise, e.g.,
there are two restriction functors $\prorst$ and $\inrst$, which might be different, and,
even more, the restriction functor $\prorst$ does not have to be exact 
in general (cf. \S\ref{ss:rsts}).
For the convenience of the reader we discuss in this section the principal 
features of cohomological Mackey functors for profinite groups.


\subsection{Mackey systems}
\label{ss:macksys}
Let $G$ be a profinite group. A non-empty set $\ca{M}$ of open subgroups  of $G$
will be called a {\it $G$-Mackey system}, if
\begin{itemize}
\item[(MS$_1$)] for all $g\in G$ and $U\in\ca{M}$ one has ${}^gU=gUg^{-1}\in\ca{M}$, and
\item[(MS$_2$)] for all $U,V\in\ca{M}$ one has $U\cap V\in\ca{M}$.
\end{itemize}
For a profinite group $G$ there are two principal $G$-Mackey systems:
The $G$-Mackey system of all open subgroups
\begin{align}
G^{\sharp}&=\{\,U\subseteq G\mid \text{$U$ an open subgroup of $G$}\,\},\label{eq:macksys1}\\
\intertext{and the $G$-Mackey system of all open, normal subgroups}
G^{\natural}&=\{\,U\subseteq G\mid \text{$U$ an open, normal subgroup of $G$}\,\}.\label{eq:macksys2}\\
\intertext{If $G$ is finite, one has also the $G$-Mackey system}
G^\circ&=\{\triv,G\}.\label{eq:macksys3}
\end{align}
By definition, any non-trivial subset of $G^\natural$ is a $G$-Mackey system.
Such a Mackey system will be called {\it normal}.
Every $G$-Mackey system $\caM$ contains a maximal normal $G$-Mackey system
$\caM^\natural=\caM\cap G^\natural$.
A $G$-Mackey system $\ca{M}$ will be called a {\it $G$-Mackey basis},
if $\bigcap_{U\in\ca{M}} U=\triv$. Hence in this case
$\ca{M}$ forms a basis of neighbourhoods of $1\in G$.
If $\caM$ is a $G$-Mackey system and  $U,V\in\caM$ such that $V$ is normal in $U$,
then we call $(U,V)$ a {\it normal section} in $\caM$.


\subsection{Cohomological Mackey functors}
\label{ss:def}
Let $G$ be a profinite group, and let $\ca{M}\subseteq G^\sharp$ be a $G$-Mackey system.
A {\it cohomological $\ca{M}$-Mackey functor $\boX$}
is a family of abelian groups $(\boX_U)_{U\in \ca{M}}$ together with
group homomorphisms
\begin{equation}
\label{eq:mac1}
\begin{aligned}
i_{U,V}^\boX\colon&\boX_U\longrightarrow \boX_V,\\
t_{V,U}^\boX\colon&\boX_V\longrightarrow \boX_U,\\
c_{g,U}^\boX\colon& \boX_U\longrightarrow\boX_{{}^gU},
\end{aligned}
\end{equation}
for all $U,V\in\ca{M}$, $V\subseteq U$, $g\in G$, which satisfy the following identities:
\begin{itemize}
\item[(cMF$_1$)] $i_{U,U}^\boX=t_{U,U}^\boX=c_{u,U}^\boX=\iid_{\boX_U}$ for all $U\in\ca{M}$
and all $u\in U$;
\item[(cMF$_2$)] $i_{V,W}^\boX\circ i_{U,V}^\boX=i_{U,W}^\boX$ and
$t_{V,U}^\boX\circ t_{W,V}^\boX=t_{W,U}^\boX$ for all $U,V,W\in\ca{M}$ and 
$W\subseteq V\subseteq U$;
\item[(cMF$_3$)] $c_{h,{}^gU}^\boX\circ c_{g,U}^\boX=c_{hg,U}^\boX$ for all $U\in \ca{M}$ and 
$g,h\in G$;
\item[(cMF$_4$)] $i_{{}^gU,{}^gV}^\boX\circ c_{g,U}^\boX=c_{g,V}^\boX\circ i_{U,V}^\boX$ 
for all $U,V\in \ca{M}$ and $g\in G$;
\item[(cMF$_5$)] $t_{{}^gV,{}^gU}^\boX\circ c_{g,V}^\boX=c_{g,U}^\boX\circ t_{V,U}^\boX$ 
for all $U,V\in \ca{M}$ and $g\in G$;
\item[(cMF$_6$)] $i_{U,W}^\boX\circ t_{V,U}^\boX=\sum_{g\in W\setminus U/V}
t_{ {}^gV\cap W,W}^\boX\circ c_{g,V\cap W^g}^\boX\circ i_{V,V\cap W^g}^\boX$, where $W^g=g^{-1}Wg$
for all subgroups $U, V, W\in\ca{M}$ and 
$V,W\subseteq U$;
\item[(cMF$_7$)] $t_{V,U}^\boX\circ i_{U,V}^\boX=|U:V|.\iid_{\boX_U}$ 
for all subgroups $U,V\in\ca{M}$, $V\subseteq U$.
\end{itemize}
From now on we will follow the convention used in \cite{pw:macguide}
and omit the domain in the notation of the morphisms $c_{g,U}$, $g\in G$.
Note that (cMF$_1$) and (cMF$_3$) imply that for $U,V\in\ca{M}$, $V$ normal in $U$,
$\boX_V$ carries naturally the structure of a left $\Z[U/V]$-module. 
In this case (cMF$_6$) reduces to the simple identity
\begin{equation}
\label{eq:mac2}
i_{U,V}^\boX\circ t_{V,U}^\boX= N_{U/V},
\end{equation}
where $N_{U/V}=\sum_{x\in U/V} x$.

A homomorphisms of cohomological $\ca{M}$-Mackey functors $\phi\colon\boX\to\boY$ is a family
of group homomorphisms $\phi_U\colon\boX_U\to\boY_U$, $U\in\ca{M}$, which commute with all the mappings
$i_{.,.}$, $t_{.,.}$ and $c_{g,.}$, $g\in G$.
For any abelian subcategory $\ca{C}\subseteq\Zmod$ we denote by 
$\MC_G(\ca{M},\ca{C})$ the abelian category of all 
cohomological $\ca{M}$-Mackey functors with coefficients in the category $\ca{C}$.

\begin{example}
\label{ex:1}
Let $G$ be a profinite group, let $\ca{M}$ be a $G$-Mackey system, and
let $A$ be an abelian group. 

\noindent
(a) By $\boT=\boT(G,A)$ we denote the cohomological $\ca{M}$-Mackey functor
satisfying $\boT_U=A$, for $U\in\ca{M}$,  $i_{U,V}^\boT=\iid_A$ for $V\in\ca{M}$, 
$V\subseteq U$, 
all mappings $c_g$, $g\in G$, are equal to the identity, and $t_{V,U}^\boT=|U:V|.\iid_{A}$.
It is straightforward to verify that $\boT$ satisfies the axioms (cMF$_1$) - (cMF$_7$).

\noindent
(b) Similarly, one defines the cohomological $\ca{M}$-Mackey functor
$\buT=\buT(G,A)$ by $\buT_U=A$, $U\in\ca{M}$, $t_{V,U}^\boS=\iid_A$, $V\in\ca{M}$, $V\subseteq U$,
all mappings $c_g$, $g\in G$, are equal to the identity, and $i_{U,V}^{\buT}=|U:V|.\iid_A$.
\end{example}


\subsection{Discrete permutation modules}
\label{ss:disperm}
There are three alternative descriptions of cohomological Mackey functors
of a profinite group $G$ in terms of additive functors on certain additive categories.

Let $\caM$ be a $G$-Mackey system, and let $\Omega$ be a finite, discrete left $G$-set. 
We call $\Omega$ a left {\it $(G,\caM)$-set},
if $\stab_G(\omega)\in\ca{M}$ for all $\omega\in\Omega$,
and $\Z[\Omega]$ a finitely generated, left {\it $\Z[G,\caM]$-permutation module}.
Let $\fgPerM(G,\ca{M})$ denote the additive category of finitely generated $\Z[G,\caM]$-permutation
modules.
For $U\in\ca{M}$ and $g\in G$ one has an isomorphism
\begin{equation}
\label{eq:isoperm}
\rho_g^U\colon\Z[G/{}^gU]\longrightarrow \Z[G/U],\qquad
\rho_g^U(xgUg^{-1})=xgU,\qquad x\in G.
\end{equation}
Moreover, if $V\in\caM$, $V\subseteq U$, one has homomorphisms of discrete left $\Z[G]$-modules
\begin{align}
\eui_{V,U}\colon &\Z[G/V]\longrightarrow\Z[G/U],\qquad&\eui_{V,U}(xV)&=xU,\label{eq:homperm1}\\
\eut_{U,V}\colon &\Z[G/U]\longrightarrow\Z[G/V],&\eut_{U,V}(xU)&=\textstyle{\sum_{r\in\caR} xrV},
\label{eq:homperm2}
\end{align}
where $x\in G$ and $\caR\subseteq U$ is any system of coset representatives of $U/V$.
One has a natural equivalence
\begin{equation}
\label{eq:perM}
\Adf(\fgPerM(G,\caM)^{\op},\Zmod)\simeq\MC_G(\caM,\Zmod),
\end{equation}
where $\Adf(.,.)$ denotes the category of (covariant) additive functors,
which is achie\-ved by assigning to the additive functor 
$\tboX\in \ob(\Adf(\fgPerM(G,\caM)^{\op},\Zmod))$ the cohomological $\caM$-Mackey functor 
$\boX\in \ob(\MC_G(\caM,\Zmod))$ satisfying
\begin{equation}
\label{eq:ident}
\boX_U=\tboX(\Z[G/U]),\ 
c_{g,U}^\boX=\tboX(\rho_g^U),\
i_{U,V}^\boX=\tboX(\eui_{V,U}),\ 
t_{V,U}^\boX=\tboX(\eut_{U,V}),
\end{equation}
for $U,V\in\caM$, $V\subseteq U$, and $g\in G$.

The additive category $\fgPerM(G,\caM)$ is naturally equivalent to its opposite
category $\fgPerM(G,\caM)^{\op}$, i.e.,
\begin{equation}
\label{eq:permdual}
\argu^\ast=\Hom_{\Z}(\argu,\Z)\colon\fgPerM^{\op}\longrightarrow\fgPerM
\end{equation}
is a natural equivalence. Hence one has also a natural equivalence
\begin{equation}
\label{eq:perM2}
\Adf(\fgPerM(G,\caM),\Zmod)\simeq\MC_G(\caM,\Zmod)
\end{equation}
by assigning the additive functor
$\hboX\in \ob(\Adf(\fgPerM(G,\caM),\Zmod))$
the cohomological $\caM$-Mackey functor $\boX$ satisfying
\begin{equation}
\label{eq:ident2}
\boX_U=\hboX(\Z[G/U]),\ 
c_{g,U}^\boX=\hboX(\rho_{g^{-1}}^{{}^gU}),\
i_{U,V}^\boX=\hboX(\eut_{U,V}),\ 
t_{V,U}^\boX=\hboX(\eui_{V,U}),
\end{equation}
Let $\PerM(G,\caM)$ denote the category of left $\Z[G,\caM]$-permutation modules,
i.e., $M\in\ob(\PerM(G,\caM))$ if $M$ contains a family of finitely generated
$\Z[G,\caM]$-submodules $M_i\in\ob(\fgPerM(G,\caM))$, $i\in I$, such that
$M=\textstyle{\varinjlim_{i\in I} M_i.}$
Then one has a natural equivalence
\begin{equation}
\label{eq:ident2b}
\Adf(\fgPerM(G,\caM)^{\op},\Zmod)\simeq\Adf^-(\PerM(G,\caM)^{\op},\Zmod),
\end{equation}
where $\Adf^-(\PerM(G,\caM)^{\op},\Zmod)$ denotes the category of additive functors commuting
with inverse limits, i.e., for 
$\tboX\in\ob(\Adf^-(\PerM(G,\caM)^{\op},\Zmod))$ and $M$ as above, one has
$\tboX(M)=\textstyle{\varprojlim_{i\in I} \tboX(M_i).}$
In a similar fashion one obtains a natural equivalence
\begin{equation}
\label{eq:ident3}
\Adf(\fgPerM(G,\caM),\Zmod)\simeq\Adf^+(\PerM(G,\caM),\Zmod),
\end{equation}
where $\Adf^+(\PerM(G,\caM),\Zmod)$ denotes the category of additive functors
commuting with direct limits.

\begin{rem}
\label{rem:pontryagin}
Let $G$ be a profinite group, let
$\caM$ be a $G$-Mackey system, and let
$\boX\in\ob(\MC_G(\caM,\prfp))$. Then $\boX^\vee$ given by (cf. \S\ref{ss:abpro})
\begin{equation}
\label{eq:defpontmac}
(\boX^\vee)_U=(\boX_U)^\vee,\
i_{U,V}^{\boX^\vee}=(t_{V,U}^\boX)^\vee,\
t_{V,U}^{\boX^\vee}=(i_{U,V}^\boX)^\vee,\
c_{g,U}^{\boX^\vee}=(c_{g^{-1},{}^gU}^\boX)^\vee,
\end{equation}
for $U,V\in\caM$, $V\subseteq U$, $g\in G$, defines
a cohomological $\caM$-Mackey functor with values
in the abelian category $\zpdis$, i.e.,
$\boX^\vee\in\ob(\MC_G(\caM,\zpdis))$. It is straightforward
to verify that
Pointryagin duality induces natural equivalences
\begin{equation}
\label{eq:pontmaceq}
\begin{aligned}
\argu^\vee\colon&\MC_G(\caM,\prfp)^{\op}\longrightarrow\MC_G(\caM,\zpdis),\\
\argu^\vee\colon&\MC_G(\caM,\zpdis)^{\op}\longrightarrow\MC_G(\caM,\prfp).
\end{aligned}
\end{equation}
and 
\begin{equation}
\label{eq:pontmaceq2}
\begin{aligned}
\argu^\vee\colon&\MC_G(\caM,\cvec)^{\op}\longrightarrow\MC_G(\caM,\pdis),\\
\argu^\vee\colon&\MC_G(\caM,\pdis)^{\op}\longrightarrow\MC_G(\caM,\cvec).
\end{aligned}
\end{equation}
\end{rem}

\subsection{Basic restriction functors}
\label{ss:resmac}
There are several restriction functors for cohomological Mackey functors
for profinite groups. Some of these functors arise simply by restricting the Mackey system;
others are more complicated (cf. \S\ref{ss:rsts}).

Let $G$ be a profinite group, let $\caL$ and $\caM$ be $G$-Mackey systems,
$\caL\subseteq\caM$, and let $\boX\in\ob(\MC_G(\caM,\Zmod))$. 
Restricting the functor $\boX$ to open subgroups which are contained in $\caL$ yields
a restriction functor
\begin{equation}
\label{eq:res0}
\rst^\caM_\caL(\argu)\colon \MC_G(\caM,\Zmod)\longrightarrow \MC_G(\caL,\Zmod).
\end{equation}
Let $\ca{O}\in\caM$.
Then $\caM(\caO)=\{\,U\in\caM\mid U\subseteq\caO\,\}$ is a $\caO$-Mackey system.
Restricting a functor $\boX\in\ob(\MC_G(\caM,\Zmod))$ to open subgroups $U\in\caM$ 
contained in $\caO$ yields a restriction functor
\begin{equation}
\label{eq:res1}
\rst^\caM_{\caM(\caO)}(\argu)\colon \MC_G(\caM,\Zmod)\longrightarrow \MC_\caO(\caM(\caO),\Zmod).
\end{equation}
Let $N$ be a closed, normal subgroup of $G$, and $\caM_N=\{\,U/N\mid U\in\caM,\ N\subseteq U\,\}$.
Then $\caM_N$ is a $G/N$-Mackey system.
Restricting a functor $\boX$ to subgroups $U\in\caM$
containing $N$ yields a restriction functor
\begin{equation}
\label{eq:res2}
\dfl^\caM_{\caM_N}(\argu)=\rst^\caM_{\caM_N}(\argu)\colon \MC_G(\caM,\Zmod)\longrightarrow \MC_{G/N}(\caM_N,\Zmod)
\end{equation}
which can also be considered as a {\it deflation functor}.
All these restriction functors are additive and exact, and we will use the abbreviation $\rst$
for any composition of these functors. In particular, for $U,V\in\caM$,
$V$ normal in $U$, one has a restriction functor
\begin{equation}
\label{eq:res3}
\rst^\caM_{(U/V)^\circ}(\argu)\colon \MC_G(\caM,\Zmod)\longrightarrow \MC_{U/V}((U/V)^\circ,\Zmod)
\end{equation}
(cf. \eqref{eq:macksys3}).


\subsection{The induction functor}
\label{ss:ind}
Let $G$ be a profinite group, and let $H\subseteq G$ be a closed subgroup of $G$.
For every finitely generated, discrete left $\Z[G]$-permutation module $M$,
$\rst^G_H(M)$ is a finitely generated, discrete left $\Z[H]$-permutation module.
Let $\boX\in\ob(\MC_H(H^\sharp,\Zmod))$. Then 
$\boY=\idn_{H^\sharp}^{G^\sharp}(\boX)$ is the cohomological $G^\sharp$-Mackey functor
satisfying
\begin{equation}
\label{eq:ind1}
\boY_U=\tboX(\rst^G_H(\Z[G/U]))=\hboX(\rst^G_H(\Z[G/U]))
\end{equation}
for $U\in G^\sharp$.

\begin{example}
\label{ex:ind}
Let $H$ be a closed subgroup of the profinite group $G$. 

\noindent
{\rm (a)} Let
$\boT=\boT(H,\Z)\in\ob(\MC_H(H^\sharp,\Zmod))$ be the cohomological
$H^\sharp$-Mackey functor described in Example~\ref{ex:1}(a). Let
$\boY=\idn_{H^\sharp}^{G^\sharp}(\boT)$, and let $U\in G^\sharp$. Then
$\boY_U=\Z[H\backslash G/U]$.
For $V\in G^\sharp$, $V\subseteq U$, let $\pi_{V,U}\colon H\backslash G/V\to
H\backslash G/U$ denote the canonical map. Then for $r,s,g\in G$ one has
\begin{align}
c_{g,U}^\boY(HrU)&=Hrg^{-1}{}^gU,\label{eq:indT1}\\
i_{U,V}^\boY(HrU)&=\sum_{HxV\in\pi^{-1}(HrU)} HxV,\label{eq:indT2}\\
t^\boY_{V,U}(HsV)&=|H\cap {}^sU:H\cap {}^sV|\cdot HsU.\label{eq:indT3}
\end{align}
{\rm (b)} Let
$\buT=\buT(H,\Z)\in\ob(\MC_H(H^\sharp,\Zmod))$ be the cohomological
$H^\sharp$-Mackey functor described in Example~\ref{ex:1}(b),
and $\boZ=\idn_{H^\sharp}^{G^\sharp}(\buT)$.
As before one has for $U\in G^\sharp$ that
$\boZ_U=\Z[H\backslash G/U]$.
For $r,s,g\in G$ one has
\begin{align}
c_{g,U}^\boZ(HrU)&=Hrg^{-1}{}^gU,\label{eq:indS1}\\
i_{U,V}^\boZ(HrU)&=\sum_{HxV\in\pi^{-1}(HrU)} 
|H\cap {}^rU:H\cap {}^xV|\cdot HxV,\label{eq:indS2}\\
t^\boZ_{V,U}(HsV)&= HsU.\label{eq:indS3}
\end{align}
\end{example}

\subsection{The restriction functors $\prorst$ and $\inrst$}
\label{ss:rsts}
Let $H$ be a closed subgroup of the profinite group $G$. The
{\it coinduction functor}
\begin{equation}
\label{eq:coind}
\coind_H^G(\argu)\colon {}_H\dis\longrightarrow {}_G\dis,
\end{equation}
$\coind_H^G(M)=C_H(G,M)$ (cf. \cite[\S I.2.5]{ser:gal}), $M\in ob({}_H\dis)$,
is the right adjoint to the restriction functor $\rst^G_H(\argu)\colon{}_G\dis\to{}_H\dis$.
The adjunction is achieved by the counit and unit
\begin{equation}
\label{eq:counit} 
\begin{aligned}
\eps\colon&\rst^G_H(\coind_H^G(\argu))&\longrightarrow&\ \ \iid_{{}_H\dis},\\
\eta\colon&\iid_{{}_G\dis}&\longrightarrow&\ \ \coind_H^G(\rst^G_H(\argu)),
\end{aligned}
\end{equation}
where $\eps_M(f)=f(1)$, $f\in C_H(G,M)$, $M\in\ob({}_H\dis)$, and 
$\eta_Q(q)(g)=g.q$, $q\in Q$, $g\in G$, $Q\in ob({}_G\dis)$, i.e.,
\begin{equation}
\label{eq:counitid}
\begin{aligned}
\iid_{\rst^G_H(Q)}&=\eps_{\rst^G_H(Q)}\circ\rst^G_H(\eta_Q),\\
\iid_{\coind_H^G(M)}&=\coind(\eps_M)\circ\eta_{\coind^G_H(M)},
\end{aligned}
\end{equation}
(cf. \cite[\S IV.1, Thm.~1]{mcl:cat}).
Let $\boX\in\ob(\MC_G(G^\sharp,\Zmod))$ be a cohomological $G^\sharp$-Mackey functor,
let $\tboX\in\ob(\Adf(\fgPerM(G,G^\sharp)^{\op},\Zmod))$ denote the associated
contravariant additive functor, and $\hboX\in\ob(\Adf(\fgPerM(G,G^\sharp),\Zmod))$ the associated
covariant additive functor. One has the restriction functors
\begin{equation}
\label{eq:rsts}
\begin{aligned}
\prorst^{G^\sharp}_{H^\sharp}(\argu)\colon&\MC_G(G^\sharp,\Zmod)\longrightarrow
\MC_H(H^\sharp,\Zmod),\\
\prorst^{G^\sharp}_{H^\sharp}(\boX)_{H_0}&=\tboX(\coind_{H_0}^H(\Z[H/H_0])),\ H_0\in H^\sharp,\\
\inrst^{G^\sharp}_{H^\sharp}(\argu)\colon&\MC_G(G^\sharp,\Zmod)\longrightarrow
\MC_H(H^\sharp,\Zmod),\\
\inrst^{G^\sharp}_{H^\sharp}(\boX)_{H_0}&=\hboX(\coind_{H_0}^H(\Z[H/H_0])),\ H_0\in H^\sharp;
\end{aligned}
\end{equation}
e.g.,
\begin{equation}
\label{eq:rstsex}
\begin{aligned}
\prorst^{G^\sharp}_{H^\sharp}(\boX)_{H_0}&=\varprojlim_{U\supseteq H_0} (\boX_U, t^\boX_{V,U}),\\
\inrst^{G^\sharp}_{H^\sharp}(\boX)_{H_0}&=\varinjlim_{U\supseteq H_0} (\boX_U, i^\boX_{U,V}).
\end{aligned}
\end{equation}
Note that if $H$ is open in $G$ one has natural isomorphisms
\begin{equation}
\label{eq:rstid}
\prorst^{G^\sharp}_{H^\sharp}(\argu)\simeq\inrst^{G^\sharp}_{H^\sharp}(\argu)\simeq
\rst^{G^\sharp}_{H^\sharp}(\argu)
\end{equation}
(cf. \eqref{eq:res1}). This is the reason why in \cite{tw:mac2} there occurs only one restriction functor.
One has the following.

\begin{prop}
\label{prop:adjoint}
Let $G$ be a profinite group, let $H$ be a closed subgroup of $G$, and
let $\boX\in \ob(\MC_H(H^\sharp,\Zmod))$, $\boY\in \ob(\MC_G(G^\sharp,\Zmod))$.
Then one has natural isomorphisms
\begin{align}
\Hom_{G^\sharp}(\idn_{H^\sharp}^{G^\sharp}(\boX),\boY)&
\simeq \Hom_{H^\sharp}(\boX,\prorst_{H^\sharp}^{G^\sharp}(\boY)),
\label{eq:rightad}\\
\Hom_{G^\sharp}(\boY,\idn_{H^\sharp}^{G^\sharp}(\boX))&
\simeq \Hom_{H^\sharp}(\inrst_{H^\sharp}^{G^\sharp}(\boY),\boX),
\label{eq:leftad}
\end{align}
i.e., $\prorst$ is the right-adjoint of $\idn$, and $\inrst$ is the left-adjoint of $\idn$.
\end{prop}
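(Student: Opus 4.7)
The plan is to derive both adjunctions from the classical adjunction $\rst^G_H\dashv\coind^G_H$ of discrete modules recalled in \S\ref{ss:rsts}, using the presentation of cohomological Mackey functors as additive functors on categories of (finitely generated) permutation modules. Under the equivalence \eqref{eq:perM} combined with the extension \eqref{eq:ident2b}, a morphism $\phi\colon\idn(\boX)\to\boY$ is the same datum as a natural transformation
\[
\phi_M\colon\tboX(\rst^G_H(M))\longrightarrow\tboY(M),\qquad M\in\PerM(G,G^\sharp),
\]
extended to pro-permutation modules. Dually, by \eqref{eq:rstsex}, a morphism $\psi\colon\boX\to\prorst(\boY)$ amounts to a natural, compatible family of maps $\psi_{H_0,U}\colon\boX_{H_0}\to\boY_U$ indexed by pairs $H_0\in H^\sharp$, $U\in G^\sharp$ with $H_0\subseteq U$.

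The bijection $\Hom_{G^\sharp}(\idn(\boX),\boY)\simeq\Hom_{H^\sharp}(\boX,\prorst(\boY))$ is constructed in the standard way from the unit $\eta\colon\iid\to\coind\rst$ and counit $\eps\colon\rst\coind\to\iid$ of \S\ref{ss:rsts}. Concretely, given $\phi$, project to the summand $\Z[H/(H\cap U)]$ of $\rst^G_H(\Z[G/U])=\bigoplus_{HgU\in H\backslash G/U}\Z[H/(H\cap{}^gU)]$ corresponding to the trivial double coset $HU$, apply $\tboX$, and compose with $\phi_U$; precomposing with the transfer $t^\boX_{H_0,H\cap U}$ arising from $\eut_{H\cap U,H_0}$ produces $\psi_{H_0,U}$. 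Inverting this construction uses the unit $\eta_{\Z[G/U]}$ analogously. The triangle identities \eqref{eq:counitid} force the two constructions to be mutually inverse, and naturality of $\eta,\eps,\phi,\psi$ combined with the characterization of the Mackey structure maps $c_g,i_{U,V},t_{V,U}$ as the $\tboX/\tboY$-images of the canonical permutation-module morphisms $\rho_g,\eui_{V,U},\eut_{U,V}$ (cf.\ \eqref{eq:ident}) ensures compatibility with the Mackey structure.

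The second isomorphism \eqref{eq:leftad} is obtained by the formally dual construction using the covariant presentation $\hboX,\hboY$ from \eqref{eq:perM2} and the extension \eqref{eq:ident3}; the variance reversal swaps the roles of $\eta$ and $\eps$ and turns the inverse limits of \eqref{eq:rstsex} into the direct limits defining $\inrst^{G^\sharp}_{H^\sharp}(\boY)_{H_0}$, but the structure of the argument is unchanged.

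The principal technical obstacle is that $H$ is only assumed closed in $G$: the double-coset set $H\backslash G/U$ can be infinite, so $\rst^G_H(\Z[G/U])$ is an infinite direct sum and $\coind^G_H(\Z[H/H_0])$ is only a pro-permutation module. One must therefore deploy the extensions of $\tboX,\tboY,\hboX,\hboY$ dictated by \eqref{eq:ident2b} and \eqref{eq:ident3} consistently, so that they match the concrete (co)limits in \eqref{eq:rstsex}. Once this bookkeeping is fixed, the adjunctions themselves are formal.
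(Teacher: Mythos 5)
Your argument is essentially the paper's own proof: both transport the unit and counit of the classical adjunction $\rst^G_H\dashv\coind^G_H$ through the presentation of cohomological Mackey functors as additive functors on permutation modules, and let the triangle identities \eqref{eq:counitid} do the work; your explicit double-coset description of $\psi_{H_0,U}$ is just the componentwise form of the paper's $\teps_\boX$ and $\teta_\boY$. One small correction to your closing paragraph: since $U$ is open, $G/U$ and hence $H\backslash G/U$ is finite, so $\rst^G_H(\Z[G/U])$ is a \emph{finitely generated} $\Z[H,H^\sharp]$-permutation module (as noted in \S\ref{ss:ind}); the only genuine limit issue is on the coinduction side, which you do address via \eqref{eq:ident2b} and \eqref{eq:ident3}.
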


\begin{proof}
Let $H_0\in H^\sharp$, $U\in G^\sharp$, $\boX\in \ob(\MC_H(H^\sharp,\Zmod))$, 
$\boY\in \ob(\MC_G(G^\sharp,\Zmod))$.
The adjunction \eqref{eq:counit} yields homomorphisms of abelian groups
\begin{equation}
\label{eq:adj1}
\begin{aligned}
\eps_\boX(H_0)\colon& \tboX(\Z[H/H_0])\longrightarrow\tboX(\rst^G_H(\coind^G_H(\Z[H/H_0]))),\\
\eta_\boY(U)\colon&\tboY(\coind^G_H(\rst^G_H(\Z[G/U])))\longrightarrow\tboY(\Z[G/U]),
\end{aligned}
\end{equation}
which are natural in $H_0$ and $U$, respectively. Thus they yield morphisms of
cohomological Mackey functors
\begin{equation}
\label{eq:adj2}
\begin{aligned}
\teps_\boX\colon&\boX\longrightarrow\prorst^{G^\sharp}_{H^\sharp}(\idn_{H^\sharp}^{G^\sharp}(\boX)),\\
\teta_\boY\colon&\idn^{G^\sharp}_{H^\sharp}(\prorst^{G^\sharp}_{H^\sharp}(\boY)) \longrightarrow\boY,
\end{aligned}
\end{equation}
which are natural in $\boX$ and $\boY$, respectively, i.e., 
\begin{equation}
\label{eq:adj2b}
\begin{aligned}
\teps\colon&\iid_{\MC_H(H^\sharp,\Zmod)}\longrightarrow \prorst^{G^\sharp}_{H^\sharp}(\idn_{H^\sharp}^{G^\sharp}(\argu))\\
\teta\colon&\idn^{G^\sharp}_{H^\sharp}(\prorst^{G^\sharp}_{H^\sharp}(\argu))\longrightarrow
\iid_{\MC_G(G^\sharp,\Zmod)}
\end{aligned}
\end{equation}
are natural transformations of functors.
By the first line of \eqref{eq:counitid}, the composite of the maps of cohomological $G^\sharp$-Mackey functors
\begin{equation}
\label{eq:adj3}
\xymatrix@C2truecm{
\idn_{H^\sharp}^{G^\sharp}(\boX)
\ar[r]^-{\idn(\teps_\boX)}&
\idn_{H^\sharp}^{G^\sharp}(\prorst^{G^\sharp}_{H^\sharp}(\idn_{H^\sharp}^{G^\sharp}(\boX)))
\ar[r]^-{\teta_{\idn(\boX)}}&
\idn_{H^\sharp}^{G^\sharp}(\boX)
}
\end{equation}
is the identity on $\idn_{H^\sharp}^{G^\sharp}(\boX)$; while the second line 
of \eqref{eq:counitid} yields that the composite of
\begin{equation}
\label{eq:adj4}
\xymatrix@C1.7truecm{
\prorst_{H^\sharp}^{G^\sharp}(\boY)
\ar[r]^-{\teps_{\prorst(\boY)}}&
\prorst_{H^\sharp}^{G^\sharp}(\idn_{H^\sharp}^{G^\sharp}(\prorst_{H^\sharp}^{G^\sharp}(\boY)))
\ar[r]^-{\prorst(\teta_\boY)}&
\prorst_{H^\sharp}^{G^\sharp}(\boY)
}
\end{equation}
is the identity on $\prorst_{H^\sharp}^{G^\sharp}(\boY)$.
Hence $\teps$ is the unit, and $\teta$ is the counit of the adjuction \eqref{eq:rightad}.
The adjunction \eqref{eq:leftad} can be proved by a similar argument.
\end{proof}

\subsection{From modules to cohomological Mackey functors and vice versa}
\label{ss:modmac}
Let $G$ be a profinite group, and let ${}_G\dis$ denote the abelian category of discrete left
$G$-modules (cf. \cite[\S I.2.1]{ser:gal}). For $M\in \ob({}_G\dis)$ let $\boh^0(M)$ denote the cohomological
$G^\sharp$-Mackey functor given by
\begin{equation}
\label{eq:hup0}
\boh^0(M)_U=M^U,
\end{equation}
where for $U,V\in G^\sharp$, $V\subseteq U$, $i^{\boh_0(M)}_{U,V}$ is the canonical
injection. Let $\caR\subseteq U$ be a system of coset representatives of $U/V$. 
Then $t^{\boh_0(M)}_{V,U}=\sum_{r\in R}r\colon M^V\to M^U$ is given by the {\it transfer}, and
for $g\in G$, $c_{g,U}^{\boh_0(M)}\colon M^U\to M^{{}^gU}$ is given by multiplication with $g$.

Let $p$ be a prime number. For $U\in G^\sharp$ let $\omega_U=\kernel(\Z_p\dbl U\dbr\to\Z_p)$
denote the augmentation ideal of the completed $\Z_p$-algebra of $U$.
Let $Q\in\ob(\ZGprf)$. Then $\boh_0(Q)\in\ob(\MC_G(G^\sharp,\prfp))$ will denote the cohomological
$G^\sharp$-Mackey functor given by
\begin{equation}
\label{eq:hdown0}
\boh_0(Q)_U=Q_U=Q/\omega_U.Q\simeq \Z_p[U\backslash G]\hotimes_G Q.
\end{equation}
For $U,V\in G^\sharp$, $V\subseteq U$, the map
$t_{V,U}^{\boh_0(Q)}\colon Q_V\to Q_U$ is just the canonical map, and
$c_{g,U}^{\boh_0(Q)}\colon Q_U\to Q_{{}^gU}$ is the map induced by multiplication with $g\in G$, while
$i_{U,V}^{\boh_0(Q)}\colon Q_U\to Q_V$ is given by 
\begin{equation}
\label{eq:ihdown}
i_{U,V}^{\boh_0(Q)}(q+\omega_U.Q)=\textstyle{(\sum_{r\in\caR} r^{-1}.q)+\omega_V.Q.}
\end{equation}

\begin{example}
\label{ex:hs}
Let $G$ be a profinite group, and let $H$ be a closed subgroup of $G$.

\noindent
{\rm (a)} Let $\Z_p\in\ob({}_G\dis)$ denote the discrete, left $G$-module isomorphic to $\Z_p$.
Then one has a canonical isomorphism $\boh^0(\Z_p)\simeq\boT(G,\Z_p)$
(cf. Ex.~\ref{ex:1}(a)).

\noindent
{\rm (b)} Let $\Z_p\in\ob(\ZGprf)$ denote the profinite, left $\ZpG$-module isomorhic to $\Z_p$.
Then one has a canonical isomorphism $\boh_0(\Z_p)\simeq\buT(G,\Z_p)$
(cf. Ex.~\ref{ex:1}(b)).

\noindent
{\rm (c)} From Example~\ref{ex:ind} one concludes that one has canonical isomorphisms
\begin{equation}
\label{eq:indTS}
\begin{aligned}
\idn_{H^\sharp}^{G^\sharp}(\boT(H,\Z_p))&\simeq\boh^0(\coind_H^G(\Z_p)),\\
\idn_{H^\sharp}^{G^\sharp}(\buT(H,\Z_p))&\simeq\boh_0(\idn_H^G(\Z_p)),
\end{aligned}
\end{equation}
where $\idn_H^G=\ZpG\hotimes_H\argu\colon {}_{\Z_p\dbl H\dbr}\prf \to\ZGprf$ is the 
{\it profinite induction functor} (cf. \S\ref{ss:profprof}).
\end{example}
For $\boX\in\ob(\MC_G(G^\sharp,\Zmod))$, the abelian group
$\inrst(\boX)=\inrst^{G^\sharp}_{\triv^\sharp}(\boX)$ carries naturally the structure of a discrete, left $G$-module.
Moreover, for $M\in\ob({}_G\dis)$ one has a canonical isomorphism
\begin{equation}
\label{eq:hin}
\inrst(\boh^0(M))\simeq M.
\end{equation}
For $\boY\in\ob(\MC_G(G^\sharp,\prfp))$, the abelian pro-p group
$\prorst(\boY)=\prorst^{G^\sharp}_{\triv^\sharp}(\boY)$ carries naturally the structure of a profinite, left $\ZpG$-module.
Furthermore, for $Q\in\ob(\ZGprf)$ one has a canonical isomorphism
\begin{equation}
\label{eq:pro}
\prorst(\boh_0(Q))\simeq Q.
\end{equation}
Note that
$\boh^0\colon{}_G\dis\to\MC_G(G^\sharp,\Zmod)$ is the
right-adjoint of the exact functor
$\inrst\colon\MC_G(G^\sharp,\Zmod)\to{}_G\dis$;
while
$\boh_0\colon\ZGprf\to\MC_G(G^\sharp,\prfp)$ is the left-adjoint
of the exact functor
$\prorst(\argu)\colon\MC_G(G^\sharp,\prfp)\to\ZGprf$.

\subsection{Homology as cohomological Mackey functor}
\label{ss:hommac}
Let $Q\in\ob(\ZGprf)$, and let $(P_\bullet,\der_\bullet,\eps_Q)$ be a
projective resolution of $Q$ in $\ZGprf$. Then $(\boh_0(P_\bullet),\boh_0(\der_\bullet))$
is a chain complex in $\MC_G(G^\sharp,\prfp)$ concentrated in positive degrees.
The cohomological $G^\sharp$-Mackey functor
\begin{equation}
\label{eq:hommac1}
\boh_k(Q)=H_k(\boh_0(P_\bullet),\boh_0(\der_\bullet))
\end{equation}
is independent of the choice of the projective resolution $(P_\bullet,\der_\bullet,\eps_Q)$, and
one has 
\begin{equation}
\label{eq:hommac2}
\boh_k(Q)_U\simeq \Tor^U_k(\Z_p,\rst^G_U(Q))=H_k(U,\rst^G_U(Q)).
\end{equation}
The homology functor commutes with inverse limits.
Thus from \eqref{eq:rstsex} one concludes the following.

\begin{fact}
\label{fact:tres}
Let $G$ be a profinite group, let $H$ be a closed subgroup of $G$, and let
$Q$ be a profinite $\ZpG$-module. Then for all $k\geq 0$ one has
a canonical isomorphism
\begin{equation}
\label{eq:hommac3}
\prorst^{G^\sharp}_{H^\sharp}(\boh_k(Q))\simeq\boh_k(\rst^G_H(Q)).
\end{equation}
\end{fact}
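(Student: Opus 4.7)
The plan is to fix a projective resolution $(P_\bullet,\der_\bullet,\eps_Q)$ of $Q$ in $\ZGprf$ and to evaluate both sides at an arbitrary $H_0\in H^\sharp$. A first ingredient is that $\rst^G_{H_0}\colon\ZGprf\to{}_{\Z_p\dbl H_0\dbr}\prf$ preserves projectives, even though $H_0$ is in general only closed (not open) in $G$: by \cite[Prop.~I.1.2]{ser:gal} there is a continuous section $H_0\backslash G\to G$ in the category of profinite sets, which identifies $\rst^G_{H_0}(\ZpG)$ with the free profinite $\Z_p\dbl H_0\dbr$-module on the profinite set $H_0\backslash G$, and every projective profinite $\ZpG$-module is a summand of a free profinite module $\ZpG\hotimes\Z_p\dbl X\dbr$ for a suitable profinite set $X$. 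Consequently $(\rst^G_{H_0}(P_\bullet),\rst^G_{H_0}(\der_\bullet),\rst^G_{H_0}(\eps_Q))$ is a projective resolution of $\rst^G_{H_0}(Q)$ in ${}_{\Z_p\dbl H_0\dbr}\prf$.

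Combining this with \eqref{eq:hommac2} and \eqref{eq:rstsex} one obtains
\begin{align*}
\boh_k(\rst^G_H(Q))_{H_0}&\simeq H_k\bigl(\Z_p\hotimes_{H_0}\rst^G_{H_0}(P_\bullet)\bigr),\\
\prorst^{G^\sharp}_{H^\sharp}(\boh_k(Q))_{H_0}&\simeq\textstyle{\varprojlim_{U\supseteq H_0}}\,H_k\bigl(\Z_p\hotimes_U\rst^G_U(P_\bullet)\bigr),
\end{align*}
where $U$ ranges over open subgroups of $G$ containing $H_0$, and, by \eqref{eq:hdown0}, each term of the inverse system equals $\Z_p\dbl U\backslash G\dbr\hotimes_G P_\bullet$ with transition maps induced by the canonical projections $\Z_p[V\backslash G]\to\Z_p[U\backslash G]$ for $H_0\subseteq V\subseteq U$.

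It therefore suffices to establish, degreewise and naturally in $H_0$, the identification $\Z_p\dbl H_0\backslash G\dbr\hotimes_G P_n\simeq\varprojlim_U\Z_p[U\backslash G]\hotimes_G P_n$, and then to commute $H_k$ past the inverse limit. The first isomorphism follows from $H_0=\bigcap_U U$, the resulting identification $\Z_p\dbl H_0\backslash G\dbr\simeq\varprojlim_U\Z_p[U\backslash G]$ in $\prfZG$, and the commutation of the completed tensor product with inverse limits in its first argument. The second commutation uses the exactness of inverse limits in the profinite categories $\cvec$ and $\prfp$ (cf.\ the proof of Proposition~\ref{prop:cts}), which ensures that $H_k$ commutes with inverse limits of chain complexes there. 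Assembling these identifications and tracking their naturality under inclusions and conjugations within $H$ yields the desired isomorphism of cohomological $H^\sharp$-Mackey functors. The main technical step is the first of the two commutations, namely the profinite identification $\Z_p\hotimes_{H_0}\rst^G_{H_0}(P_n)\simeq\varprojlim_U\Z_p\hotimes_U\rst^G_U(P_n)$, which is the place that genuinely exploits the closed-subgroup structure of $H_0\subseteq G$.
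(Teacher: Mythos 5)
Your argument is correct and follows essentially the same route as the paper, which disposes of this fact in one line by remarking that homology commutes with inverse limits and invoking \eqref{eq:rstsex}; you have simply made that remark explicit by supplying its two ingredients (the chain-level identification coming from the commutation of $\hotimes_G$ with inverse limits, and the exactness of inverse limits in the profinite module categories), together with the implicitly needed observation that restriction to a closed subgroup preserves projectives. No gap.
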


Note that Fact~\ref{fact:tres} implies that 
$\prorst^{G^\sharp}_{\triv^\sharp}(\boh_k(Q))=0$ for all $k\geq 1$,
and for any $Q\in\ob(\ZGprf)$ which is a torsion free abelian pro-$p$ group.
Moreover, Fact~\ref{fact:tres} can be seen as a Pontryagin dual version of  
the fact that for any discrete left $G$-module $M$ and any closed subgroup $H$ of $G$
one has
\begin{equation}
\label{eq:dislim}
H^k(H,\rst^G_H(M))=\textstyle{\varinjlim_{H\subseteq U} H^k(U,\rst^G_U(M))}
\end{equation}
(cf. \cite[\S I.2.1, Prop.~8]{ser:gal}), where the inverse system is running over all
open subgroup of $G$ containing $H$.


\section{Section cohomology of cohomological Mackey functors}
\label{s:seco}
In this section we discuss an intrinsic cohomology theory one can
define for a single cohomological Mackey functor of a profinite group $G$.
Elementary properties of this theory can be used to study the injectivity
of a homomorphism $\phi\colon\boX\to\boY$ of
cohomological Mackey functors (cf. \S \ref{ss:injcrit}),
and they also provide an effective criterion to calculate the
projective dimension of a profinite, left $\FpG$-module $M$
in case that $G$ is a pro-$p$ group (cf. Thm.~\ref{thm:dim}).
Several issues of these cohomology groups were discussed in
\cite{blth:mackey} and \cite{tw:stand}.

\subsection{Cohomological Mackey functors for finite groups}
\label{ss:cohmacfin}
Let $G$ be a finite group, let $G^\circ=\{G,\triv\}$, and
let $\boX\in ob (\MC_G(G^\circ,\Zmod))$.
Define
\begin{equation}
\label{eq:kcs}
\begin{aligned}
\bok^0(G,\boX)&=\kernel(i^\boX_{G,\triv}),&\bok^1(G,\boX)&=\boX_{\triv}^G/\image(i^\boX_{G,\triv}),\\
\boc_0(G,\boX)&=\coker(t_{\triv,G}^\boX),&\boc_1(G,\boX)&=
\kernel(t_{\triv,G}^\boX)/\omega_G\boX_{\triv},
\end{aligned}
\end{equation}
where $\omega_G=\kernel(\Z[G]\to\Z)$ is the augmentation ideal of the $\Z$-group algebra of $G$.
The following properties were established in \cite[\S 2.4]{tw:stand}.

\begin{prop}
\label{prop:secco}
Let $G$ be a finite group, and let $\boX\in \ob(\MC_G(G^\circ,\Zmod))$.
\begin{itemize}
\item[(a)] The canonical maps yield an exact sequence
\begin{equation}
\label{eq:6term}
\xymatrix@R=3pt{
0\ar[r]&\boc_1(G,\boX)\ar[r]&
\hH^{-1}(G,\boX_{\triv})\ar[r]&
\bok^0(G,\boX)\ar[r]&\ldots\\
\ldots\ar[r]&\boc_0(G,\boX)\ar[r]&
\hH^{0}(G,\boX_{\triv})\ar[r]&
\bok^1(G,\boX)\ar[r]&0
}
\end{equation}
where $\hH^\bullet(G,\argu)$ denote the Tate cohomology groups.
\item[(b)] Let $\xymatrix{0\ar[r]&\boX\ar[r]^\phi&\boY\ar[r]^\psi&\boZ\ar[r]&0}$ be a short exact sequence
of cohomological $G^\circ$-Mackey functors. Then one has exact sequences
\begin{equation}
\label{eq:longk}
\xymatrix@R=3pt{
0\ar[r]&\bok^0(G,\boX)\ar[r]^{\bok^0(\phi)}&\bok^0(G,\boY)\ar[r]^{\bok^0(\psi)}&
\bok^0(G,\boZ)\ar[r]&\ldots\\
\ldots\ar[r]&\bok^1(G,\boX)\ar[r]^{\bok^1(\phi)}&\bok^1(G,\boY)\ar[r]^{\bok^1(\psi)}&
\bok^1(G,\boZ)
}
\end{equation}
and
\begin{equation}
\label{eq:longc}
\xymatrix@R=3pt{
&\boc_1(G,\boX)\ar[r]^{\boc_1(\phi)}&\boc_1(G,\boY)\ar[r]^{\boc_1(\psi)}&
\boc_1(G,\boZ)\ar[r]&\ldots\\
\ldots\ar[r]&\boc_0(G,\boX)\ar[r]^{\boc_0(\phi)}&\boc_0(G,\boY)\ar[r]^{\boc_0(\psi)}&
\boc_0(G,\boZ)\ar[r]&0.
}
\end{equation}
\end{itemize}
\end{prop}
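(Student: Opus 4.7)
The plan is to reduce part (a) to the classical six-term kernel-cokernel exact sequence for a composition of two maps, and part (b) to two applications of the snake lemma. The common preparation is to factor the structure maps $i^\boX_{G,\triv}$ and $t^\boX_{\triv,G}$ through the $G$-invariants and $G$-coinvariants of $\boX_\triv$.

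First I would observe that (cMF$_1$) gives $c^\boX_{g,G}=\iid_{\boX_G}$ for every $g\in G$, so (cMF$_4$) applied with $U=G$, $V=\triv$ forces $\image(i^\boX_{G,\triv})\subseteq\boX_\triv^G$; hence $i^\boX_{G,\triv}$ factors canonically as $\boX_G\xrightarrow{\bar{i}}\boX_\triv^G\hookrightarrow\boX_\triv$. Dually, (cMF$_5$) yields $t^\boX_{\triv,G}\circ c^\boX_{g,\triv}=t^\boX_{\triv,G}$, so $t^\boX_{\triv,G}$ vanishes on $\omega_G\boX_\triv$ and induces $\bar{t}\colon\boX_\triv/\omega_G\boX_\triv\to\boX_G$. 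The normal specialisation \eqref{eq:mac2} of (cMF$_6$) then identifies the composition $\bar{i}\circ\bar{t}$ with the Tate norm $N_G\colon\boX_\triv/\omega_G\boX_\triv\to\boX_\triv^G$. Since by construction $\kernel\bar{i}=\bok^0(G,\boX)$, $\coker\bar{i}=\bok^1(G,\boX)$, $\kernel\bar{t}=\boc_1(G,\boX)$, $\coker\bar{t}=\boc_0(G,\boX)$, and $\kernel N_G=\hH^{-1}(G,\boX_\triv)$, $\coker N_G=\hH^{0}(G,\boX_\triv)$, the standard six-term exact sequence for the composition $\bar{i}\circ\bar{t}$ coincides with \eqref{eq:6term}, proving (a).

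For (b), a short exact sequence in $\MC_G(G^\circ,\Zmod)$ is by definition level-wise exact, and morphisms of cohomological Mackey functors are compatible with the $c_g$ and hence with the factorisations through $\boX_\triv^G$ and $\boX_\triv/\omega_G\boX_\triv$. For the $\bok$-sequence I would apply the snake lemma to the commutative diagram with rows
\[
0\to\boX_G\to\boY_G\to\boZ_G\to 0
\quad\text{and}\quad
0\to\boX_\triv^G\to\boY_\triv^G\to\boZ_\triv^G,
\]
linked vertically by $\bar{i}^\boX,\bar{i}^\boY,\bar{i}^\boZ$; the bottom row is exact by left exactness of $(\argu)^G$, and the resulting snake sequence is precisely \eqref{eq:longk}. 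For the $\boc$-sequence I would apply the snake lemma to the diagram with rows
\[
\boX_\triv/\omega_G\boX_\triv\to\boY_\triv/\omega_G\boY_\triv\to\boZ_\triv/\omega_G\boZ_\triv\to 0
\quad\text{and}\quad
0\to\boX_G\to\boY_G\to\boZ_G\to 0,
\]
linked by $\bar{t}^\boX,\bar{t}^\boY,\bar{t}^\boZ$; the top row is exact by right exactness of coinvariants, and the snake sequence is \eqref{eq:longc}.

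No step is a serious obstacle: the whole argument is a direct translation of the Mackey axioms (cMF$_1$)–(cMF$_6$) into the ambient abelian framework, followed by two standard homological-algebra invocations. The only point demanding a brief verification is the identification $\bar{i}\circ\bar{t}=N_G$; this is exactly what \eqref{eq:mac2} says once one reads it through the factorisations just constructed.
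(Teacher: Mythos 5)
Your proposal is correct and complete. The paper itself does not prove Proposition~\ref{prop:secco} but only cites \cite[\S 2.4]{tw:stand}, so there is no internal proof to compare against; your argument — factoring $i^\boX_{G,\triv}$ through $\boX_\triv^G$ and $t^\boX_{\triv,G}$ through $\boX_\triv/\omega_G\boX_\triv$ via (cMF$_1$), (cMF$_4$), (cMF$_5$), identifying $\bar{i}\circ\bar{t}$ with the Tate norm via \eqref{eq:mac2}, and then invoking the kernel--cokernel six-term sequence for (a) and the snake lemma (with left-exactness of invariants and right-exactness of coinvariants supplying the partial exactness of the outer rows, consistent with \eqref{eq:longk} lacking a terminal $0$ and \eqref{eq:longc} lacking an initial $0$) for (b) — is the standard and evidently intended derivation.
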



\subsection{Cohomological Mackey functors for profinite groups}
\label{ss:CMprof}
Let $G$ be a profinite group, let $\caM$ be a $G$-Mackey system,
and let $U,V\in\caM$, $V$ normal in $U$.
The restriction functor (cf. \eqref{eq:res3})
\begin{equation}
\label{eq:rstrep}
\rst^\caM_{(U/V)^\circ}(\argu)\colon\MC_G(\caM,\Zmod)\longrightarrow\MC_{U/V}((U/V)^\circ,\Zmod)
\end{equation}
is exact. For $k=0,1$ we put (cf. \S\ref{ss:macksys})
\begin{equation}
\boc_k(U/V,\boX)=\boc_k(U/V,\rst^\caM_{(U/V)^\circ}(\boX)),\quad
\bok^k(U/V,\boX)=\bok^k(U/V,\rst^\caM_{(U/V)^\circ}(\boX)).
\end{equation}
Recall that
\begin{equation}
\label{eq:tirstlim}
\begin{aligned}
\prorst(\boX)&=\textstyle{\varprojlim_{U\in\caM} (\boX_U,t^\boX_{V,U}),}\\
\inrst(\boX)&=\textstyle{\varinjlim_{U\in\caM}(\boX_U,i^\boX_{U,V}).}
\end{aligned}
\end{equation}
In particular, $\inrst(\boX)$ is a discrete left $G$-module, and
\begin{equation}
\label{eq:iresex}
\inrst(\argu)\colon\MC_G(\caM,\Zmod)\longrightarrow {}_G\dis
\end{equation}
is an exact functor. If $G\in\caM$, we denote by
\begin{equation}
\label{eq:jdef}
j_{\boX}\colon \boX_G\to\inrst(\boX)
\end{equation}
the canonical map.
If $\boX\in\ob(\MC_G(\caM,\prfp))$, then $\prorst(\boX)$ is a
profinite left $\ZpG$-module, and
\begin{equation}
\label{eq:tresex}
\prorst(\argu)\colon\MC_G(\caM,\prfp)\longrightarrow \ZGprf
\end{equation}
is an exact functor. For 
$\boX\in\ob(\MC_G(\caM,\Zmod))$ the higher derived functors
\begin{equation}
\label{eq:highlim}
\caR^k\prorst(\boX)=\textstyle{\varprojlim^k_{U\in\caM}(\boX_U,t^\boX_{V,U})}
\end{equation}
$k\geq 1$, might not be trivial (cf. \cite{jens:invlim}).

\subsection{Cohomological Mackey functors of type $H^0$}
\label{ss:cohmacprof1}
Let $G$ be a profinite group, let $\caM$ be a $G$-Mackey system, and let
$\boX$ be a cohomological $\caM$-Mackey functor. 
If $i^\boX_{U,V}\colon\boX_U\to\boX_V$
is injective for all $U,V\in\caM$, $V\subseteq U$,
we call $\boX$ {\it $i$-injective}. 
If $\boY$ is a subfunctor of an $i$-injective, cohomological $\caM$-Mackey functor $\boX$,
then $\boY$ is also $i$-injective.
The following fact is straightforward.

\begin{fact}
\label{fact:vank0}
Let $G$ be a profinite group, and let $\caM$ be a $G$-Mackey system.
The cohomological $\caM$-Mackey functor $\boX$ is $i$-injective if, and only if,
$\bok^0(U/V,\boX)=0$ for every normal section $(U,V)$ in $\caM$.
\end{fact}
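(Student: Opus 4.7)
The plan is to unravel the definitions and then reduce an arbitrary inclusion to a normal section by intersecting conjugates.

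First I would note that for a normal section $(U,V)$ in $\caM$, the restricted functor $\rst^\caM_{(U/V)^\circ}(\boX)$ is the cohomological $(U/V)^\circ$-Mackey functor with $\rst(\boX)_{U/V}=\boX_U$, $\rst(\boX)_\triv=\boX_V$, and the map $i^{\rst(\boX)}_{U/V,\triv}$ equal to $i^\boX_{U,V}$. Consequently, the definition in \eqref{eq:kcs} gives
\[
\bok^0(U/V,\boX)=\kernel\bigl(i^\boX_{U,V}\bigr).
\]
Thus the ``only if'' direction is immediate: if every $i^\boX_{U,V}$ is injective (for $V\subseteq U$ in $\caM$), then in particular $\bok^0(U/V,\boX)=0$ for every normal section.

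For the ``if'' direction, fix arbitrary $U,V\in\caM$ with $V\subseteq U$. Because $U$ and $V$ are open in $G$, $V$ has finite index in $U$, so only finitely many distinct conjugates ${}^uV$ of $V$ in $U$ occur. Each ${}^uV$ lies in $\caM$ by (MS$_1$), and therefore
\[
W=\bigcap_{u\in U}{}^uV
\]
is a \emph{finite} intersection of elements of $\caM$, hence $W\in\caM$ by (MS$_2$). By construction $W\subseteq V\subseteq U$ and $W$ is normal in $U$ (and so also in $V$). Thus both $(U,W)$ and $(V,W)$ are normal sections in $\caM$.

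Now (cMF$_2$) gives $i^\boX_{U,W}=i^\boX_{V,W}\circ i^\boX_{U,V}$. The hypothesis $\bok^0(U/W,\boX)=0$ translates, by the identification above, to the injectivity of $i^\boX_{U,W}$; hence $i^\boX_{U,V}$ is injective, as required. The only real point to verify is the existence of $W\in\caM$ with $W\subseteq V$ and $W\trianglelefteq U$, and this is what forced us to observe that the orbit of $V$ under $U$-conjugation is finite so that the Mackey-system closure properties apply.
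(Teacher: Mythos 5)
Your proof is correct, and it supplies exactly the argument the paper leaves implicit when it calls the fact ``straightforward'': the forward direction is the identification $\bok^0(U/V,\boX)=\kernel(i^\boX_{U,V})$, and the converse follows by passing to the core $W=\bigcap_{u\in U}{}^uV$, which lies in $\caM$ by (MS$_1$), (MS$_2$) and finiteness of $|U:V|$, and factoring $i^\boX_{U,W}=i^\boX_{V,W}\circ i^\boX_{U,V}$ via (cMF$_2$). Nothing to add.
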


For profinite $\FpG$-modules (resp. $\ZpG$-modules) of finite
projective dimension one has the following.

\begin{fact}
\label{fact:pdim}
Let $G$ be a profinite group, and let $M$ be
a profinite, left $\FpG$-module (resp. $\ZpG$-module)
of projective dimension $d<\infty$.
Then $\boh_d(M)$ is $i$-injective.
\end{fact}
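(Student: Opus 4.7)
The plan is to realise $\boh_d(M)$ as a subfunctor of $\boh_0(P_d)$ for a projective resolution $P_\bullet\to M$ of length~$d$, and then to verify $i$-injectivity of $\boh_0(P)$ for every projective~$P$. Since any subfunctor of an $i$-injective cohomological Mackey functor is itself $i$-injective (the induced $i$-map being simply the restriction of the injective ambient map), these two reductions together will suffice by Fact~\ref{fact:vank0}.

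For the first reduction, fix a projective resolution $(P_\bullet,\der_\bullet,\eps_M)$ of $M$ in $\FGprf$ (resp.\ $\ZGprf$) with $P_{d+1}=0$, which exists because $\pdim(M)=d$. For every open subgroup $V\subseteq G$, $\FpG$ (resp.\ $\ZpG$) is a finitely generated free $\F_p\dbl V\dbr$-module (resp.\ $\Z_p\dbl V\dbr$-module) on any transversal of $V$, so $\rst^G_V$ preserves projectivity and $\rst^G_V(P_\bullet)$ is again a projective resolution. Hence
\begin{equation}
\boh_d(M)_V\;=\;H_d(V,\rst^G_V(M))\;=\;\kernel\bigl((\boh_0(\der_d))_V\bigr).
\end{equation}
As kernels in $\MC_G(G^\sharp,\prfp)$ are formed degreewise, these identifications assemble into a canonical inclusion $\boh_d(M)\hookrightarrow\boh_0(P_d)$ of cohomological $G^\sharp$-Mackey functors.

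For the second reduction, additivity of $\boh_0$ and closure of $i$-injective cohomological Mackey functors under direct summands allow one to replace $P$ by a free module of the form $\FpG\hotimes_{\F_p}E$ (resp.\ $\ZpG\hotimes_{\Z_p}E$) for a profinite $\F_p$- (resp.\ $\Z_p$-) module~$E$. The identity $(\FpG)_V=\FpG/\omega_V\FpG\simeq\F_p[V\backslash G]$ yields, after commuting $\hotimes_{\F_p}E$ past $V$-coinvariants,
\begin{equation}
\boh_0(\FpG\hotimes_{\F_p}E)_V\;\simeq\;\F_p[V\backslash G]\hotimes_{\F_p}E,
\end{equation}
and under this identification $i_{U,V}$ is the transfer map $Ug\mapsto\sum_{Vh\subseteq Ug}Vh$ of Example~\ref{ex:ind}(b) tensored with~$\iid_E$. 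Because each $Vh$ is contained in a unique $U$-coset, the transfer has linearly independent columns (with entries $0,1$); tensoring such a block matrix with $E$ preserves injectivity with no flatness hypothesis on~$E$. The argument in $\ZGprf$ is verbatim the same after replacing $\F_p$ by $\Z_p$.

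The one non-routine ingredient is the reduction to free modules of the form $\Lambda\hotimes_{R}E$ with $\Lambda\in\{\FpG,\ZpG\}$ and $R\in\{\F_p,\Z_p\}$, i.e.\ the structural description of projective objects in $\FGprf$ (resp.\ $\ZGprf$) as direct summands of such modules; this is classical (cf.\ \cite{brum:pseudo} and \cite{ribzal:prof}). Once granted, every remaining step is a direct application of the definitions in \S\ref{ss:modmac} and of the explicit induction formulas in Example~\ref{ex:ind}.
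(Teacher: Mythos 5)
Your argument is correct, but it takes a genuinely different route from the paper. The paper's proof is a two-line observation: since $\pdim(M)=d$, the functor $\Tor_d^G(\argu,M)$ is the top non-vanishing derived functor of $\argu\hotimes_GM$ and hence preserves monomorphisms; applying it to the injection $\F_p[U\backslash G]\hookrightarrow\F_p[V\backslash G]$ (which induces $i^{\boh_d(M)}_{U,V}$) gives the claim immediately. You instead exploit the vanishing $P_{d+1}=0$ to identify $\boh_d(M)$ with the subfunctor $\kernel(\boh_0(\der_d))\subseteq\boh_0(P_d)$ and then verify $i$-injectivity of $\boh_0(P)$ for projective $P$ by reducing to free modules $\Lambda\hotimes_RE$ and computing the transfer explicitly; this second step is essentially a hands-on special case of the paper's later Fact~\ref{fact:projct} (which is proved there via Tate cohomology of the finite quotients $U/V$, and which the proof of Theorem~\ref{thm:dim} then combines with exactly your subfunctor observation). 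Each step of yours checks out: restriction to an open subgroup preserves projectivity, kernels of morphisms of cohomological Mackey functors are formed degreewise, subfunctors and direct summands of $i$-injective functors are $i$-injective, and the map $\eut_{U,V}$ is split injective over the base ring so that completing the tensor product with $E$ preserves injectivity. What the paper's argument buys is brevity and independence from any explicit resolution or structure theory of projectives; what yours buys is a concrete picture of $\boh_d(M)$ sitting inside $\boh_0(P_d)$, which is precisely the configuration exploited again in the proof of Theorem~\ref{thm:dim}.
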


\begin{proof}
By hypothesis, the additive functor $\Tor_d^G(\argu,M)$ is left exact.
For $U,V\in G^\sharp$, $V\subseteq U$, the map $i_{U,V}^{\boh_d(M)}$ coincides
with  $\Tor_d^G(\F_p[U\backslash G],M)\to\Tor_d^G(\F_p[V\backslash G],M)$
which is induced by the injection $\F_p[U\backslash G]\to\F_p[V\backslash G]$.
This yields the claim.
\end{proof}

For $i$-injective cohomological Mackey functors one has also the following.

\begin{fact}
\label{fact:iinj}
Let $\caM$ be a $G$-Mackey system of the profinite group $G$,
and let $\boX$ be an $i$-injective cohomological $\caM$-Mackey functor.
Then $\inrst(\boX)=0$ is equivalent to $\boX=0$.
\end{fact}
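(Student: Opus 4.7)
The plan is to prove the non-trivial direction: assuming $\inrst(\boX)=0$, deduce $\boX=0$. The reverse implication is immediate because $\inrst$ is an additive functor sending the zero object to the zero object.

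First I would unpack the definition \eqref{eq:tirstlim}: $\inrst(\boX)=\varinjlim_{U\in\caM}(\boX_U,i^\boX_{U,V})$. By axiom (MS$_2$) the index set $\caM$ is directed under reverse inclusion (for any $U,V\in\caM$ one has $U\cap V\in\caM$ with $U\cap V\subseteq U,V$), so the colimit is a \emph{filtered} colimit. Consequently, for any $U\in\caM$ and any $x\in\boX_U$, the image of $x$ in $\inrst(\boX)$ vanishes if and only if there exists some $V\in\caM$ with $V\subseteq U$ such that $i^\boX_{U,V}(x)=0$.

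Now I would invoke the $i$-injectivity hypothesis: every transition map $i^\boX_{U,V}$ is injective, so the canonical structure map $\boX_U\to\inrst(\boX)$ is itself injective for each $U\in\caM$. (This is the standard fact that a filtered colimit of injections is a colimit into which each term injects.) Hence $\inrst(\boX)=0$ forces $\boX_U=0$ for every $U\in\caM$, and then automatically all the structure morphisms $i^\boX_{U,V}$, $t^\boX_{V,U}$, $c^\boX_{g,U}$ are zero, i.e.\ $\boX=0$ as a cohomological $\caM$-Mackey functor.

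There is no real obstacle here; the statement is essentially a tautology once one notices that $i$-injectivity is exactly the condition needed to conclude that the filtered colimit defining $\inrst$ has injective coprojections. The only point worth stating carefully is the appeal to (MS$_2$) to guarantee that $\caM$ (ordered by reverse inclusion) is a directed poset, so that the filtered-colimit-of-monomorphisms argument applies.
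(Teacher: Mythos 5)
Your proof is correct. The paper states Fact~\ref{fact:iinj} without proof, and your argument --- that (MS$_2$) makes $\caM$ directed under reverse inclusion, so $\inrst(\boX)$ is a filtered colimit along injective transition maps and hence each $\boX_U$ injects into it --- is exactly the straightforward verification the paper leaves implicit.
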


If $\caM$ contains $G$ and
$i^\boX_{G,U}\colon\boX_G\to\boX_U$ is injective for all $U\in\caM$, then we say that
$\boX$ is {\it terminally $i$-injective}, i.e.,
$\boX$ is terminally $i$-injective if, and only if, the canonical map
\begin{equation}
\label{eq:j}
j_\boX\colon \boX_G\longrightarrow\inrst(\boX)
\end{equation}
is injective.

The cohomological $\caM$-Mackey functor $\boX$ is said to be {\it of type $H^0$} (or to 
{\it satisfy Galois descent}),
if $\boX$ is $i$-injective, and for every normal section $(U,V)$ in $\caM$,
one has $\bok^1(U/V,\rst^\caM_{(U/V)^\circ}(\boX))=0$;
e.g., for $M\in\ob({}_G\dis)$, $\boh^0(M)$ (cf. \eqref{eq:hup0}) is a cohomological $G^\sharp$-Mackey functor of type $H^0$. Moreover, one has the following.

\begin{fact}
\label{fact:hup}
Let $G$ be a profinite group, let $\caM$ be a $G$-Mackey system,
and let $\boX$ be a cohomological $\caM$-Mackey functor of type $H^0$. Then one has
a canonical isomorphism
\begin{equation}
\label{eq:hupires}
\boX\simeq\rst^{G^\sharp}_{\caM}(\boh^0(\inrst(\boX))).
\end{equation}
\end{fact}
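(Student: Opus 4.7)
The plan is to construct a canonical natural transformation $\alpha\colon \boX \to \rst^{G^\sharp}_{\caM}(\boh^0(\inrst(\boX)))$ componentwise and to verify that it is an isomorphism at every $U \in \caM$. For $U \in \caM$, take $\alpha_U\colon \boX_U \to \boh^0(\inrst(\boX))_U = (\inrst(\boX))^U$ to be the canonical map of $\boX_U$ into the colimit $\inrst(\boX) = \varinjlim_{V \in \caM}(\boX_V, i^\boX_{U,V})$; its image lies in the $U$-invariants by (cMF$_1$), since $c^\boX_{u,U} = \iid_{\boX_U}$ for $u \in U$. That $\alpha$ is a morphism of $\caM$-Mackey functors, with respect to the structure maps of $\boh^0$ described in Subsection~\ref{ss:modmac}, is a direct verification from the Mackey axioms, in particular (cMF$_2$)--(cMF$_5$).

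Injectivity of $\alpha_U$ is immediate from $i$-injectivity of $\boX$: the directed system $(\boX_V, i^\boX_{U,V})_{V \in \caM, V \subseteq U}$ is cofinal in the system defining $\inrst(\boX)$ and consists of monomorphisms, and filtered colimits of monomorphisms are monomorphisms in $\Ab$, so the canonical map $\boX_U \to \inrst(\boX)$ is injective.

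The main obstacle is surjectivity, where the Galois descent hypothesis enters. Given $y \in (\inrst(\boX))^U$, pick a representative $y_V \in \boX_V$ with $V \in \caM$, $V \subseteq U$, and replace $V$ by $\bigcap_{u \in U}{}^uV$, a finite intersection which remains in $\caM$ by (MS$_1$)--(MS$_2$), in order to assume $V$ is normal in $U$. Choosing representatives $u_1, \ldots, u_n$ of $U/V$, the $U$-invariance of $y$ in the colimit translates into the existence of $W_1, \ldots, W_n \in \caM$ with $W_i \subseteq V$ such that $i^\boX_{V,W_i}(c^\boX_{u_i, V}(y_V) - y_V) = 0$. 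Setting $V' = \bigcap_{u \in U}{}^u(W_1 \cap \cdots \cap W_n)$ gives an element of $\caM$ normal in $U$, and $y_{V'} := i^\boX_{V,V'}(y_V)$ is then $U/V'$-fixed in $\boX_{V'}$: for $u \in U$, writing $u = u_i v$ with $v \in V$ and using (cMF$_1$) gives $c^\boX_{u,V}(y_V) = c^\boX_{u_i,V}(y_V)$, while (cMF$_4$) together with the $U$-normality of $V$ and $V'$ yields
\[
c^\boX_{u,V'}(y_{V'}) = i^\boX_{V,V'}(c^\boX_{u,V}(y_V)) = i^\boX_{V,V'}(c^\boX_{u_i,V}(y_V)) = i^\boX_{V,V'}(y_V) = y_{V'}.
\]

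The type $H^0$ hypothesis applied to the normal section $(U,V')$ in $\caM$ now yields $\bok^1(U/V',\boX) = 0$, so $\boX_{V'}^{U/V'} = \image(i^\boX_{U,V'})$, and there exists $x \in \boX_U$ with $i^\boX_{U,V'}(x) = y_{V'}$; by construction $\alpha_U(x) = y$. The delicate step is the passage from the abstract $U$-invariance of $y$ in the colimit to a concrete $U/V'$-fixed representative in some $\boX_{V'}$, which forces the twofold shrinking of $V$ (first to make it $U$-normal, then to absorb the finitely many coset-invariance relations); once this is in place, Galois descent closes the argument directly.
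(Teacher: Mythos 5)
Your argument is correct and complete; the paper states this as a Fact without proof, and what you give is the expected argument: the map you construct is the unit of the adjunction $\inrst\dashv\boh^0$ mentioned at the end of \S\ref{ss:modmac}, injectivity on each component follows from $i$-injectivity together with cofinality of $\{V\in\caM\mid V\subseteq U\}$ in $\caM$, and surjectivity onto the $U$-invariants is exactly the vanishing of $\bok^1(U/V',\boX)$ after your (correct) two-step shrinking of $V$ to a $U$-normal $V'\in\caM$ on which the chosen representative becomes genuinely $U/V'$-fixed. One small point to tighten: the compatibility of $\alpha$ with the transfer maps $t^{\boh^0}_{V,U}=\sum_{r}r$ is not a consequence of (cMF$_2$)--(cMF$_5$) alone; it requires the double coset formula (cMF$_6$), applied after pushing both sides down into $\boX_W$ for some $W\in\caM$ normal in $U$ with $W\subseteq V$, where it reduces to $i^\boX_{U,W}\circ t^\boX_{V,U}=\sum_{g\in U/V}c_{g}\circ i^\boX_{V,W}$. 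With that citation corrected, the proof stands.
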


From Fact~\ref{fact:vank0} one concludes the following.

\begin{fact}
\label{fact:vank01}
Let $G$ be a profinite group, and let $\caM$ be a $G$-Mackey system.
The cohomological $\caM$-Mackey functor $\boX$ is of type $H^0$ if, and only if,
$\bok^k(U/V,\boX)=0$ for all $U,V\in\caM$, $V\triangleleft U$, $k=0,1$.
\end{fact}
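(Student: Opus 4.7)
The plan is that this is essentially a direct translation of the definition of ``type $H^0$'' into the language of the vanishing of $\bok^0$ and $\bok^1$, using Fact~\ref{fact:vank0} as the bridge. Recall from Section~\ref{ss:cohmacprof1} that $\boX$ is of type $H^0$ when \textbf{(i)} $\boX$ is $i$-injective, and \textbf{(ii)} $\bok^1(U/V,\boX)=0$ for every normal section $(U,V)$ in $\caM$. So condition (ii) already matches the $k=1$ part of the statement, and the entire task is to identify condition (i) with the $k=0$ part.

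To carry this out, I would simply invoke Fact~\ref{fact:vank0}, which asserts that $i$-injectivity of $\boX$ is equivalent to the vanishing of $\bok^0(U/V,\boX)$ for every normal section $(U,V)$ in $\caM$. For the forward direction, assume $\boX$ is of type $H^0$; then by definition $\boX$ is $i$-injective, so Fact~\ref{fact:vank0} gives $\bok^0(U/V,\boX)=0$ on all normal sections, while (ii) directly gives $\bok^1(U/V,\boX)=0$. For the reverse direction, the vanishing of $\bok^0(U/V,\boX)$ on all normal sections yields $i$-injectivity by Fact~\ref{fact:vank0}, and the vanishing of $\bok^1(U/V,\boX)$ is exactly condition (ii), so $\boX$ meets the definition of being of type $H^0$. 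There is no real obstacle: once Fact~\ref{fact:vank0} is available, the equivalence is a tautological unpacking of the definition, with the only subtlety being the observation that $i$-injectivity over all pairs $V\subseteq U$ in $\caM$ is controlled entirely by the normal-section data, which is precisely the content of Fact~\ref{fact:vank0}.
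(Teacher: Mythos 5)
Your proposal is correct and matches the paper exactly: the paper derives this fact directly from Fact~\ref{fact:vank0} together with the definition of ``type $H^0$'', which is precisely your argument. Nothing further is needed.
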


Suppose that $\caM$ contains $G$. If the cohomological $\caM$-Mackey functor $\boX$ is 
$i$-injective and satisfies
$\bok^1(G/U,\boX)=0$ for all $U\in\caM^\natural=M\cap G^\natural$, then we say
that $\boX$ is {\it terminally of type $H^0$}. The following property will turn out to be useful
for our purpose.

\begin{fact}
\label{fact:propterm}
Let $G$ be a pro-$p$ group, let $\caM$ be a $G$-Mackey system containing $G$,
and let $\boX\in\ob(\MC_G(\caM,\cvec))$ be terminally of type $H^0$. Then
for every $U\in\caM^\natural$ one has
$\image(i_{G,U}^\boX)=\soc_{G/U}(\boX_U)$.
\end{fact}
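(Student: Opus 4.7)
The plan is to establish the two inclusions separately, both by direct unpacking of definitions.

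First I would verify $\image(i_{G,U}^{\boX})\subseteq \soc_{G/U}(\boX_U)$. Fix $U\in\caM^\natural$, so $U$ is normal in $G$ and the finite quotient $G/U$ is a $p$-group that acts continuously on $\boX_U\in\ob(\cvec)$ via the morphisms $c_{g,U}^{\boX}$. For any $g\in G$, axiom (cMF$_1$) applied to $U=G$ gives $c_{g,G}^{\boX}=\iid_{\boX_G}$, while axiom (cMF$_4$) together with ${}^gU=U$ yields
\begin{equation*}
c_{g,U}^{\boX}\circ i_{G,U}^{\boX} \;=\; i_{G,U}^{\boX}\circ c_{g,G}^{\boX} \;=\; i_{G,U}^{\boX}.
\end{equation*}
Hence $\image(i_{G,U}^{\boX})\subseteq \boX_U^{G/U}$, and by \S\ref{ss:proffinp} (applied to the finite $p$-group $G/U$ acting on the profinite $\F_p[G/U]$-module $\boX_U$) this invariant submodule coincides with $\soc_{G/U}(\boX_U)$.

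Next I would obtain the reverse inclusion from the terminal type $H^0$ hypothesis. Unwinding the definitions in \eqref{eq:kcs} applied to the restricted functor $\rst^\caM_{(G/U)^\circ}(\boX)\in\ob(\MC_{G/U}((G/U)^\circ,\cvec))$, whose value at the top group $G/U$ is $\boX_G$ and at the trivial subgroup is $\boX_U$, one gets
\begin{equation*}
\bok^1(G/U,\boX) \;=\; \boX_U^{G/U}\big/\image(i_{G,U}^{\boX}) \;=\; \soc_{G/U}(\boX_U)\big/\image(i_{G,U}^{\boX}).
\end{equation*}
Since $\boX$ is terminally of type $H^0$ and $U\in\caM^\natural$, this quotient vanishes, giving $\soc_{G/U}(\boX_U)\subseteq\image(i_{G,U}^{\boX})$.

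Combining the two inclusions yields the claim. There is no substantial obstacle: everything reduces to the axioms (cMF$_1$) and (cMF$_4$), the identification of $\bok^1$ as an invariants-modulo-image quotient, and the standard fact that for a finite $p$-group acting on a compact $\F_p$-module the socle is the invariant submodule. The only minor point to be careful about is the bookkeeping of which subgroup of $G/U$ corresponds to which argument in the definition \eqref{eq:kcs} applied after the exact restriction functor of \eqref{eq:rstrep}.
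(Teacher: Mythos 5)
Your proposal is correct and follows essentially the same route as the paper: the paper's one-line proof likewise reads off $\image(i_{G,U}^{\boX})=(\boX_U)^{G/U}$ from the vanishing of $\bok^1(G/U,\boX)$ (the containment of the image in the invariants being built into the definition of $\bok^1$ via (cMF$_1$) and (cMF$_4$), which you spell out) and then identifies $(\boX_U)^{G/U}$ with $\soc_{G/U}(\boX_U)$ using that $G/U$ is a finite $p$-group as in \S\ref{ss:proffinp}. Your write-up simply makes both inclusions explicit.
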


\begin{proof}
As $G$ is a pro-$p$ group, $G/U$ is a finite $p$-group.
Since $\boX$ is terminally of type $H^0$, one has
$\image(i_{G,U}^\boX)=(\boX_U)^{G/U}=\soc_{G/U}(\boX_U)$
(cf. \S\ref{ss:proffinp}).
\end{proof}

\subsection{Hilbert~'90 cohomological Mackey functors}
\label{ss:h90}
Let $G$ be a profinite group, and let $\caM$ be a $G$-Mackey system.
The cohomological $\caM$-Mackey functor $\boX$ is said to be
{\it Hilbert~'90}, if $\boX$ is of type $H^0$ and
for every normal section $(U,V)$ in $\caM$ one has $H^1(U/V,\boX_V)=0$.

Let $G$ be a finite group, and let $\Omega$ be a finite set with a left $G$-action.
Then $\Omega=\bigsqcup_{1\leq j\leq r} \Omega_i$, where $\bigsqcup$ denotes
disjoint union and $\Omega_j$ are the $G$-orbits on $\Omega$.
Let $\omega_j\in\Omega_j$, and let $G_j=\stab_G(\omega_j)$ denote the stabilizer
of $\omega_j$ in $G_j$. For the left $\Z_p[G]$-permutation module $\Z_p[\Omega]$
one has isomorphisms
\begin{equation}
\label{eq:H90ind}
\Z_p[\Omega]\simeq\coprod_{1\leq j\leq r} \Z_p[\Omega_j]\simeq
\coprod_{1\leq j\leq r} \idn_{G_j}^G(\Z_p)\simeq
\coprod_{1\leq j\leq r}\coind_{G_j}^G(\Z_p).
\end{equation}
Hence, by the Eckmann-Shapiro lemma, one has that
\begin{equation}
\label{eq:H90ind2}
H^1(G,\Z_p[\Omega])\simeq \coprod_{1\leq j\leq r} H^1(G_j,\Z_p)=\coprod_{1\leq j\leq r}
\Hom_{\gr}(G_j,\Z_p)=0.
\end{equation}
This fact has the following consequence.

\begin{fact}
\label{fact:h90}
Let $G$ be a profinite group, and let $H$ be a closed
subgroup of $G$. Then the cohomological $G^\sharp$-Mackey functor
$\idn_{H^\sharp}^{G^\sharp}(\boT(H,\Z_p))$ is Hilbert~'90.
\end{fact}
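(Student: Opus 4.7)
The plan is to exploit the identification $\boY := \idn_{H^\sharp}^{G^\sharp}(\boT(H,\Z_p)) \simeq \boh^0(\coind_H^G(\Z_p))$ provided by \eqref{eq:indTS}, together with the fact recorded in \S\ref{ss:cohmacprof1} that $\boh^0(M)$ is a cohomological $G^\sharp$-Mackey functor of type $H^0$ for every $M \in \ob({}_G\dis)$. This observation immediately makes $\boY$ $i$-injective and forces $\bok^1(U/V,\boY)=0$ for every normal section $(U,V)$ in $G^\sharp$. Consequently, the only remaining condition in the definition of ``Hilbert~'90'' that needs to be checked is the vanishing $H^1(U/V,\boY_V)=0$ for every normal section $(U,V)$.

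To handle this remaining point, fix a normal section $(U,V)$. By Example~\ref{ex:ind}(a), $\boY_V = \Z_p[H\backslash G/V]$, and the $\Z_p[U/V]$-structure reads $c_{u,V}^\boY(HrV)=Hru^{-1}V$ for $u\in U$, since $V$ is normal in $U$. The $U$-orbits on the finite set $H\backslash G/V$ are therefore exactly the fibres $\pi^{-1}(HrU)$ of the canonical projection $\pi\colon H\backslash G/V\to H\backslash G/U$, and a short bookkeeping argument identifies the stabiliser of $HrV$ in the finite group $U/V$ with $V(U\cap r^{-1}Hr)/V$. Hence $\boY_V$ is a permutation $\Z_p[U/V]$-module, so the Eckmann--Shapiro computation carried out in \eqref{eq:H90ind}--\eqref{eq:H90ind2}, applied now to the finite group $U/V$ in place of $G$, gives $H^1(U/V,\boY_V)=0$.

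The only point that requires any care is the identification of the $U/V$-action on $\boY_V$ as a permutation action with subgroups of $U/V$ as stabilisers; once this is in place, the general type-$H^0$ property of $\boh^0$ together with the already-displayed Eckmann--Shapiro observation does all the work.
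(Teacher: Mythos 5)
Your proof is correct and follows essentially the same route as the paper: the type-$H^0$ part comes from the identification with $\boh^0(\coind_H^G(\Z_p))$, and the vanishing $H^1(U/V,\boY_V)=0$ comes from recognising $\boY_V$ as a $\Z_p[U/V]$-permutation module and invoking the Eckmann--Shapiro computation \eqref{eq:H90ind2}. The only (cosmetic) difference is that you let $U/V$ act directly on $H\backslash G/V$ via the conjugation maps $c_{u,V}$, whereas the paper first passes to an auxiliary open normal subgroup $W\subseteq V$ of $G$ and realises $\boY_V$ as $\Z_p[\Xi]$ for the set $\Xi$ of $V/W$-orbits on $H\backslash G/W$.
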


\begin{proof}
Let $\boX=\idn_{H^\sharp}^{G^\sharp}(\boT(H,\Z_p))$.
From the isomorphism $\boX\simeq\boh^0(\coind_H^G(\Z_p))$
(cf. Ex.~\ref{ex:hs}(c)) one concludes that $\boX$ is of type $H^0$.
Let $(U,V)$ be a normal section in $G^\sharp$, and let $W\in G^\sharp$, $W\subseteq V$, which is 
normal in $G$. Then $\boX_W$ is a transitive $\Z_p[G/W]$-permutation module,
i.e., $\boX_W\simeq\Z_p[\Omega]$ for some finite transitive left $G/W$-set $\Omega$.
Let $\Omega=\bigsqcup_{1\leq j\leq r} \Omega_i$ denote the decomposition of $\Omega$ in $V/W$-orbits,
and put $\Xi=\{\,\Omega_i\mid 1\leq i\leq r\,\}$. As $U/W\subseteq N_{G/W}(V/W)$,
$\Xi$ is a left $U/W$-set, and one has an isomorphism of left $\Z_p[U/V]$-modules
$\boX_V\simeq \Z_p[\Xi]$. 
Hence, by \eqref{eq:H90ind2}, $H^1(U/V,\boX_V)=0$ and $\boX$ is Hilbert~'90.
\end{proof}

The Hilbert~'90 property has also the following consequence.

\begin{fact}
\label{fact:h902}
Let $G$ be a profinite group, let $\caM$ be a $G$-Mackey system, and let
$\boX\in\ob(\MC_G(\caM,\prfp))$ be a Hilbert~'90 cohomological $\caM$-Mackey functor
such that $\boX_U$ is torsion free for all $U\in \caM$. 
Then $\boX_{/p}=\boX/p\boX$
is of type $H^0$.
\end{fact}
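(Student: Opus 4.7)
The plan is to deduce both vanishing statements that characterize being of type $H^0$ (via Fact~\ref{fact:vank01}) from the long exact sequence of $U/V$-cohomology associated to the short exact sequence
\begin{equation*}
\xymatrix{0\ar[r]&\boX\ar[r]^p&\boX\ar[r]&\boX_{/p}\ar[r]&0}
\end{equation*}
of cohomological $\caM$-Mackey functors, which exists because $\boX_U$ is torsion free in $\prfp$ for every $U\in\caM$, so multiplication by $p$ is a monomorphism componentwise.

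Next I would fix an arbitrary normal section $(U,V)$ in $\caM$ and restrict to $(U/V)^\circ$. Since $\boX$ is Hilbert 90 (hence of type $H^0$), the identification $\boX_U\simeq(\boX_V)^{U/V}$ holds and the $U/V$-cohomology sequence attached to $0\to\boX_V\xrightarrow{p}\boX_V\to\boX_V/p\boX_V\to 0$ reads
\begin{equation*}
0\to (\boX_V)^{U/V}\xrightarrow{p}(\boX_V)^{U/V}\to (\boX_V/p\boX_V)^{U/V}\to H^1(U/V,\boX_V).
\end{equation*}
The Hilbert~'90 hypothesis makes the last term vanish, giving the key identity $(\boX_V/p\boX_V)^{U/V}=(\boX_V)^{U/V}/p(\boX_V)^{U/V}$.

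The remaining point I would verify is that $i^{\boX_{/p}}_{U,V}\colon\boX_U/p\boX_U\to\boX_V/p\boX_V$ is injective with image exactly equal to $(\boX_V)^{U/V}/p(\boX_V)^{U/V}$. Injectivity amounts to the equality $\boX_U\cap p\boX_V=p\boX_U$ inside $\boX_V$; if $x=py$ with $x\in\boX_U=(\boX_V)^{U/V}$ and $y\in\boX_V$, then $p(gy-y)=0$ for every $g\in U/V$, and torsion freeness of $\boX_V$ forces $y\in(\boX_V)^{U/V}=\boX_U$. The image computation is then immediate from $\boX_U=(\boX_V)^{U/V}$.

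Combining these with the identification from the second paragraph gives $\bok^0(U/V,\boX_{/p})=0$ and $\bok^1(U/V,\boX_{/p})=0$ for every normal section $(U,V)$ in $\caM$, so Fact~\ref{fact:vank01} yields that $\boX_{/p}$ is of type $H^0$. The only subtle point is the interplay between torsion freeness and $U/V$-invariants used to get $\boX_U\cap p\boX_V=p\boX_U$; everything else is a direct unwinding of the definitions and Hilbert~'90.
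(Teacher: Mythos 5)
Your argument is correct and is essentially the paper's proof: both start from the short exact sequence $0\to\boX\xrightarrow{p}\boX\to\boX_{/p}\to 0$ (exact by torsion-freeness), use the Hilbert~'90 hypothesis $H^1(U/V,\boX_V)=0$ to see that $(\boX_V)^{U/V}\to(\boX_V/p\boX_V)^{U/V}$ is surjective, and use the type $H^0$ identification $\boX_U\simeq(\boX_V)^{U/V}$ to conclude that $i^{\boX_{/p}}_{U,V}$ is an isomorphism onto the $U/V$-invariants. The paper packages the final step via Proposition~\ref{prop:secco}(b) and the snake lemma, whereas you chase elements directly (your torsion-freeness verification of $\boX_U\cap p\boX_V=p\boX_U$ is just exactness of the long exact sequence at the middle term), but the substance is identical.
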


\begin{proof}
By hypothesis, one has a short exact sequence 
\begin{equation}
\label{eq:h90-1}
\xymatrix{
0\ar[r]&\boX\ar[r]^{p.}&\boX\ar[r]&\boX_{/p}\ar[r]&0
}
\end{equation}
of cohomological $\caM$-Mackey functors.
Let $U,V\in\caM$, $V$ normal in $U$. By Proposition~\ref{prop:secco}(b),
one has $\bok^0(U/V,\boX_{/p})=0$.
Hence by Fact~\ref{fact:vank0}, $\boX_{/p}$ is $i$-injective.
As $\boX$ is Hilbert~'90, the buttom row in the diagram
\begin{equation}
\label{eq:h90-2}
\xymatrix{
0\ar[r]&\boX_U\ar[r]\ar[d]^{\tilde{i}_{U,V}^{\boX}}&\boX_U\ar[r]
\ar[d]^{\tilde{i}^\boX_{U,V}}&(\boX_{/p})_U\ar[r]\ar[d]^{\tilde{i}^{\boX_{/p}}_{U,V}}&0\\
0\ar[r]&\boX_V^{U/V}\ar[r]&\boX_V^{U/V}\ar[r]&(\boX_{/p})_V^{U/V}\ar[r]&0
}
\end{equation}
is exact. As $\boX$ is of type $H^0$, $\tilde{i}_{U,V}^{\boX}$ is an isomorphism.
Hence the snake lemma implies that $\tilde{i}^{\boX_{/p}}_{U,V}$ is an isomorphism.
This yields the claim.
\end{proof}


\subsection{Cohomological Mackey functors of type $H_0$}
\label{ss:comachdown}
Let $G$ be a profinite group, let $\caM$ be a $G$-Mackey system,
and let $\boX$ be a cohomological $\caM$-Mackey functor.
We will say that $\boX$ is {\it $t$-surjective},
if $t^\boX_{V,U}\colon\boX_V\to\boX_U$
is surjective for all $U,V\in\caM$, $V\subseteq U$. 
If $\boY$ is the homomorphic image of a $t$-surjective, cohomological $\caM$-Mackey functor $\boX$,
then $\boY$ is also $t$-surjective.
The following fact is straightforward.

\begin{fact}
\label{fact:vanc0}
Let $G$ be a profinite group, and let $\caM$ be a $G$-Mackey system.
The cohomological $\caM$-Mackey functor $\boX$ is $t$-surjective if, and only if,
$\boc_0(U/V,\boX)=0$ for all $U,V\in\caM$, $V\triangleleft U$.
\end{fact}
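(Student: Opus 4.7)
The plan is to prove both directions separately, with the forward implication being essentially by definition and the reverse implication requiring a short argument via the normal core construction.

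The forward direction is immediate from the definition. If $\boX$ is $t$-surjective, then for every normal section $(U,V)$ in $\caM$, the transfer $t^\boX_{V,U}\colon\boX_V\to\boX_U$ is surjective. Because $\boc_0(U/V,\boX)=\coker(t^\boX_{V,U})$ by the definition recalled in \eqref{eq:kcs} applied to $\rst^\caM_{(U/V)^\circ}(\boX)$, the vanishing $\boc_0(U/V,\boX)=0$ follows.

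For the converse, suppose $\boc_0(U/V,\boX)=0$ for every normal section $(U,V)$ in $\caM$. Let $U,V\in\caM$ with $V\subseteq U$ (not necessarily a normal section). I would form the normal core
\[
W=\bigcap_{u\in U} {}^uV.
\]
Since $U$ and $V$ are both open in $G$, the index $|U:V|$ is finite, so the intersection is finite. Each conjugate ${}^uV$ lies in $\caM$ by (MS$_1$), and $\caM$ is closed under finite intersections by (MS$_2$), so $W\in\caM$. By construction $W$ is normal in $U$, so $(U,W)$ is a normal section in $\caM$ and by hypothesis $\boc_0(U/W,\boX)=0$; equivalently, $t^\boX_{W,U}\colon\boX_W\to\boX_U$ is surjective.

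Now the inclusions $W\subseteq V\subseteq U$ together with the transitivity of transfer (cMF$_2$) give the factorization
\[
t^\boX_{W,U}=t^\boX_{V,U}\circ t^\boX_{W,V}.
\]
Since the composite is surjective, $t^\boX_{V,U}$ is surjective. As $(U,V)$ was arbitrary, $\boX$ is $t$-surjective. The only conceptual step is verifying that the normal core remains in $\caM$, and this is purely a Mackey-system axiom check; no real obstacle arises.
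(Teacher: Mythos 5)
Your proof is correct; the paper itself gives no argument (it simply declares the fact ``straightforward''), and your normal-core reduction $W=\bigcap_{u\in U}{}^uV$ together with the factorization $t^\boX_{W,U}=t^\boX_{V,U}\circ t^\boX_{W,V}$ from (cMF$_2$) is exactly the intended way to pass from normal sections to arbitrary inclusions $V\subseteq U$ in $\caM$. The only point worth phrasing slightly more carefully is that the intersection defining $W$ involves only finitely many distinct conjugates (since $|U:V|<\infty$), so (MS$_1$) and (MS$_2$) indeed give $W\in\caM$ — which you do address.
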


From Fact~\ref{fact:elemab}(a) one concludes the following.

\begin{fact}
\label{fact:tfratt}
Let $G$ be a profinite group, let $\caM$ be a $G$-Mackey system,
and $\boX\in\ob(\MC_G(\caM,\prfp))$. Then $\boX$ is $t$-surjective if, and only
if $\boX_{/p}=\boX/p\boX$ is $t$-surjective.
\end{fact}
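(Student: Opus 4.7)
The plan is to reduce the claim pointwise to Fact~\ref{fact:elemab}. Multiplication by $p$ is a natural endomorphism $p\cdot\iid\colon\boX\to\boX$ in the category $\MC_G(\caM,\prfp)$: all structure maps $i_{U,V}^\boX$, $t_{V,U}^\boX$, $c_{g,U}^\boX$ are $\Z$-linear (indeed $\Z_p$-linear), so they commute with multiplication by $p$. Hence the quotient $\boX_{/p}=\boX/p\boX$ is formed levelwise, i.e. $(\boX_{/p})_W=\boX_W/p\boX_W=(\boX_W)_{/p}$ for every $W\in\caM$, and each structure map of $\boX_{/p}$ is obtained from the corresponding structure map of $\boX$ by reduction mod $p$. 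In particular, for any pair $V\subseteq U$ in $\caM$, the transfer of $\boX_{/p}$ from $V$ to $U$ is exactly the induced map $(t^\boX_{V,U})_{/p}\colon (\boX_V)_{/p}\to(\boX_U)_{/p}$.

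Now $t^\boX_{V,U}\colon\boX_V\to\boX_U$ is a continuous homomorphism between abelian pro-$p$ groups, so by Fact~\ref{fact:elemab} it is surjective if and only if its mod-$p$ reduction $(t^\boX_{V,U})_{/p}$ is surjective. Quantifying this equivalence over all pairs $V\subseteq U$ in $\caM$ yields that $t^\boX_{V,U}$ is surjective for every such pair precisely when $t^{\boX_{/p}}_{V,U}$ is surjective for every such pair, which is the desired equivalence. There is no genuine obstacle here; the only point beyond Fact~\ref{fact:elemab} is the (immediate) observation that the functor $\argu_{/p}$ is computed pointwise on cohomological Mackey functors with values in $\prfp$.
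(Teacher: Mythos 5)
Your proof is correct and is exactly the argument the paper intends: the paper simply states that the fact follows from Fact~\ref{fact:elemab}, and your write-up supplies the (routine) details — that $\argu_{/p}$ is computed levelwise on Mackey functors with values in $\prfp$, so the equivalence reduces to applying Fact~\ref{fact:elemab} to each transfer map $t^\boX_{V,U}$ separately.
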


Applying Pontryagin duality (cf. Rem.~\ref{rem:pontryagin})
to Fact~\ref{fact:iinj} yields the following.

\begin{fact}
\label{fact:tsur}
Let $G$ be a profinite group, let $\caM$ be a $G$-Mackey system,
and let $\boX\in\ob(\MC_G(\caM,\cvec))$ (resp. $\boX\in\ob(\MC_G(\caM,\prfp))$).
Then $\prorst(\boX)=0$ implies $\boX=0$.
\end{fact}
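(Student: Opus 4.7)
The plan is to derive this from Fact~\ref{fact:iinj} by applying Pontryagin duality, exactly as the paragraph preceding the statement indicates. The argument has two ingredients: the compatibility of Pontryagin duality with the functors $\inrst$ and $\prorst$, and the correspondence between $i$-injective and $t$-surjective Mackey functors under duality.

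Given $\boX\in\ob(\MC_G(\caM,\cvec))$ (resp. $\MC_G(\caM,\prfp)$), pass to the Pontryagin dual $\boX^\vee\in\ob(\MC_G(\caM,\pdis))$ (resp. $\MC_G(\caM,\zpdis)$) as defined in \eqref{eq:defpontmac}. Since $i^{\boX^\vee}_{U,V}=(t^{\boX}_{V,U})^\vee$ and Pontryagin duality interchanges surjections and injections, $\boX$ is $t$-surjective if and only if $\boX^\vee$ is $i$-injective. Therefore the question reduces, via the equivalences \eqref{eq:pontmaceq} and \eqref{eq:pontmaceq2}, to an assertion about an $i$-injective Mackey functor on the dual side.

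Next I would establish the natural isomorphism
\[
(\prorst(\boX))^\vee\;\simeq\;\inrst(\boX^\vee).
\]
Using \eqref{eq:tirstlim}, the left-hand side is the Pontryagin dual of the inverse limit $\varprojlim_{U\in\caM}(\boX_U,t^\boX_{V,U})$, while the right-hand side is the direct limit $\varinjlim_{U\in\caM}(\boX_U^\vee,i^{\boX^\vee}_{U,V})$. Because Pontryagin duality is an exact contravariant equivalence that converts inverse limits of compact objects into direct limits of discrete objects, and because under this duality the $t$-structure maps for $\boX$ are sent to the $i$-structure maps for $\boX^\vee$, the displayed isomorphism follows immediately.

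The conclusion now assembles easily. If $\prorst(\boX)=0$, then by the displayed isomorphism $\inrst(\boX^\vee)=0$; since $\boX^\vee$ is $i$-injective (using the $t$-surjectivity of $\boX$, which is implicit here in view of the preceding paragraph of the paper), Fact~\ref{fact:iinj} forces $\boX^\vee=0$, and a final application of Pontryagin duality yields $\boX=0$. The only substantive step is the verification of the duality isomorphism $(\prorst(\boX))^\vee\simeq\inrst(\boX^\vee)$; everything else is formal.
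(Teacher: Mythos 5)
Your proof is correct and is precisely the paper's argument: the paper's entire justification is the sentence preceding the statement, namely that the fact is obtained by applying Pontryagin duality (Remark~\ref{rem:pontryagin}) to Fact~\ref{fact:iinj}, and you have merely filled in the two formal ingredients, the correspondence $t$-surjective $\leftrightarrow$ $i$-injective under $\argu^\vee$ and the isomorphism $(\prorst(\boX))^\vee\simeq\inrst(\boX^\vee)$. You are also right to insist on the $t$-surjectivity of $\boX$, which is needed but absent from the printed statement: without it the claim fails (e.g.\ for $\boT(\Z_p,\F_p)$ all transfer maps between distinct subgroups vanish, so $\prorst(\boT)=0$ while $\boT\neq 0$), and the paper itself verifies $t$-surjectivity explicitly before invoking this fact in the proof of Theorem~\ref{thm:dim}.
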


The cohomological $\caM$-Mackey functor $\boX$ is said to be {\it of type $H_0$} (or to 
{\it satisfy Galois codescent}),
if $\boX$ is $t$-surjective, and for every normal section $(U,V)$ in $\caM$, 
one has $\boc^1(U/V,\boX)=0$,
e.g., for $Q\in\ob(\ZGprf)$ the cohomological $G^\sharp$-Mackey functor $\boh_0(Q)$
(cf. \eqref{eq:hdown0})
is of type $H_0$. Furthermore, one has the following.

\begin{fact}
\label{fact:hdown}
Let $G$ be a profinite group, let $\caM$ be a $G$-Mackey system,
and let $\boX\in\ob(\MC_G(\caM,\prfp))$ be a cohomological $\caM$-Mackey functor of type $H_0$. 
Then one has
a canonical isomorphism
\begin{equation}
\label{eq:hdowntres}
\boX\simeq\rst^{G^\sharp}_{\caM}(\boh_0(\prorst(\boX))).
\end{equation}
\end{fact}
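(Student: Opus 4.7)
The plan is to deduce this by Pontryagin duality from the analogous statement of type $H^0$ already established in Fact~\ref{fact:hup}. By Remark~\ref{rem:pontryagin}, the functor $\argu^\vee$ furnishes a contravariant equivalence $\MC_G(\caM,\prfp)\to\MC_G(\caM,\zpdis)$ which exchanges the structure morphisms $t_{V,U}$ and $i_{U,V}$. Since Pontryagin duality exchanges surjections and injections, a $t$-surjective functor corresponds to an $i$-injective one. I would proceed in three steps.

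Step 1 (exchange of $\boc_1$ and $\bok^1$). For every normal section $(U,V)$ in $\caM$, I would dualise the defining six-term sequence of Proposition~\ref{prop:secco}(a) applied to $\rst^\caM_{(U/V)^\circ}(\boX)$, together with the short exact sequence
\[
0\longrightarrow \omega_{U/V}\boX_V\longrightarrow \kernel(t^\boX_{V,U})\longrightarrow \boc_1(U/V,\boX)\longrightarrow 0.
\]
Using the standard identity $(\omega_{U/V}\boX_V)^\vee\simeq \boX_V^\vee/(\boX_V^\vee)^{U/V}$ and the fact that $(\kernel(t^\boX_{V,U}))^\vee\simeq \boX_V^\vee/\image((t^\boX_{V,U})^\vee)=\boX_V^\vee/\image(i^{\boX^\vee}_{U,V})$, a short diagram chase identifies $\boc_1(U/V,\boX)^\vee$ with $\bok^1(U/V,\boX^\vee)$. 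Combined with the exchange of surjectivity and injectivity, this shows that $\boX$ is of type $H_0$ if, and only if, $\boX^\vee$ is of type $H^0$.

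Step 2 (naturality of $\boh_0$, $\boh^0$ and of $\prorst$, $\inrst$ under duality). From \eqref{eq:tirstlim} and the exactness of inverse/direct limits on $\prfp$/$\zpdis$ one has a natural isomorphism $\prorst(\boX)^\vee\simeq \inrst(\boX^\vee)$ in ${}_G\dis$. Moreover, for any $Q\in\ob(\ZGprf)$ and any $U\in\caM$ one has the standard identity $(Q/\omega_U Q)^\vee\simeq (Q^\vee)^U$, which is natural in $U$ with respect to the structure maps, and therefore yields a natural isomorphism of cohomological $\caM$-Mackey functors
\[
\rst^{G^\sharp}_\caM\bigl(\boh_0(Q)\bigr)^\vee\;\simeq\;\rst^{G^\sharp}_\caM\bigl(\boh^0(Q^\vee)\bigr).
\]
Specialising to $Q=\prorst(\boX)$ and invoking the first isomorphism of this step gives
\[
\rst^{G^\sharp}_\caM(\boh_0(\prorst(\boX)))^\vee\;\simeq\;\rst^{G^\sharp}_\caM(\boh^0(\inrst(\boX^\vee))).
\]

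Step 3 (conclusion). Since $\boX^\vee$ is of type $H^0$ by Step~1, Fact~\ref{fact:hup} yields
\[
\boX^\vee\;\simeq\;\rst^{G^\sharp}_\caM(\boh^0(\inrst(\boX^\vee))).
\]
Applying $\argu^\vee$ and using that Pontryagin duality is involutive on $\prfp$ (cf.\ \eqref{eq:pontryagin}), together with the identification from Step~2, I obtain
\[
\boX\;\simeq\;\bigl(\rst^{G^\sharp}_\caM(\boh^0(\inrst(\boX^\vee)))\bigr)^\vee\;\simeq\;\rst^{G^\sharp}_\caM(\boh_0(\prorst(\boX))),
\]
which is the desired canonical isomorphism.

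The main obstacle will be Step~1: verifying that $\boc_1$ and $\bok^1$ correspond under Pontryagin duality requires carefully unwinding the definitions in \eqref{eq:kcs}, in particular controlling the closure issues around the augmentation ideal $\omega_{U/V}$ and the dual of the image/kernel of $t^\boX_{V,U}$. Once this is in place, Steps~2 and 3 are essentially formal manipulations with adjunctions and the involutivity of Pontryagin duality.
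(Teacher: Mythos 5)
Your argument is correct, and it is essentially the route the paper intends: Fact~\ref{fact:hdown} is stated without proof, the whole type-$H_0$ subsection being presented as the Pontryagin-dual mirror of the type-$H^0$ one (compare Fact~\ref{fact:tsur}, obtained by dualizing Fact~\ref{fact:iinj}), so your reduction to Fact~\ref{fact:hup} via Remark~\ref{rem:pontryagin} -- including the identifications $\boc_1(U/V,\boX)^\vee\simeq\bok^1(U/V,\boX^\vee)$, $\prorst(\boX)^\vee\simeq\inrst(\boX^\vee)$ and $\boh_0(Q)^\vee\simeq\boh^0(Q^\vee)$ -- is exactly what is implicit there. The closure issue you flag in Step~1 is harmless, since $\omega_{U/V}\boX_V$ is the continuous image of a compact set in the profinite module $\boX_V$ and hence already closed.
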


\begin{rem}
\label{rem:hdown}
Note that 
$\prorst\colon \MC_G(\caM,\prfp)\to\ZGprf$
is the right-adjoint of 
$\rst^{G^\sharp}_\caM\circ\boh_0\colon\ZGprf\to\MC_G(\caM,\prfp)$.
Let
\begin{equation}
\label{eq:counittres}
\eps\colon \rst^{G^\sharp}_\caM\circ\boh_0\longrightarrow\iid_{\MC_G(\caM,\prfp)}
\end{equation}
denote the counit of the adjunction. Then $\boX$ is of type $H_0$ if, and only if, $\eps_\boX$
is an isomorphism.
\end{rem}

By Fact~\ref{fact:vanc0}, one has also the following.

\begin{fact}
\label{fact:vanc01}
Let $G$ be a profinite group, and let $\caM$ be a $G$-Mackey system.
The cohomological $\caM$-Mackey functor $\boX$ is of type $H_0$ if, and only if,
$\boc_0(U/V,\boX)=\boc_1(U/V,\boX)=0$ for all $U,V\in\caM$, $V\triangleleft U$.
\end{fact}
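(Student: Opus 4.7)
The statement is a direct combination of the definition of type $H_0$ with Fact~\ref{fact:vanc0}, so my plan is simply to unpack the definitions and apply the earlier fact.

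First I would recall that by definition $\boX$ is of type $H_0$ exactly when two conditions hold simultaneously: (i) $\boX$ is $t$-surjective, and (ii) $\boc_1(U/V,\boX) = 0$ for every normal section $(U,V)$ in $\caM$. Condition (ii) is already one half of the claimed vanishing.

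Next I would invoke Fact~\ref{fact:vanc0}, which translates condition (i) into the vanishing statement $\boc_0(U/V,\boX) = 0$ for every normal section $(U,V)$ in $\caM$. This is the other half of the claimed vanishing.

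Combining the two equivalences gives the biconditional: $\boX$ is of type $H_0$ iff both $\boc_0(U/V,\boX) = 0$ and $\boc_1(U/V,\boX) = 0$ hold for all normal sections $(U,V)$ in $\caM$. There is no real obstacle here; the only thing to check is that the ``for all normal sections'' quantifier in Fact~\ref{fact:vanc0} matches the one occurring in the definition of type $H_0$, which it does verbatim. Thus the proof is essentially a one-line reduction to previously established facts.
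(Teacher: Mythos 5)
Your proposal is correct and matches the paper exactly: the paper states this fact with only the remark ``By Fact~\ref{fact:vanc0}, one has also the following,'' i.e.\ it is obtained by combining the definition of type $H_0$ ($t$-surjectivity plus the vanishing of $\boc_1$ on all normal sections) with the translation of $t$-surjectivity into the vanishing of $\boc_0$ provided by Fact~\ref{fact:vanc0}. Nothing further is needed.
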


For projective, profinite left $\ZpG$-modules or $\FpG$-modules the following is true.

\begin{fact}
\label{fact:projct}
Let $G$ be a profinite group, and let $Q$ be a projective,
profinite left $\FpG$-module (resp. $\ZpG$-module).
Then $\boh_0(Q)$ is a cohomological $G^\sharp$-Mackey functor
which is also of type $H^0$.
\end{fact}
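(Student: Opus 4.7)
My plan is to reduce to the case of a free profinite module. Since $\boh_0$ is additive and Proposition~\ref{prop:secco}(b) applied to a split short exact sequence shows that the functors $\bok^0(U/V,\argu)$ and $\bok^1(U/V,\argu)$ distribute over direct summands, it would suffice to verify the claim when $Q=F:=\ZpG\hotimes\Z_p\dbl X\dbr$ is a free profinite module on a profinite set $X$; every projective $Q$ is then a direct summand of such an $F$.

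Given this reduction, I would fix a normal section $(U,V)$ in $G^\sharp$ and compute
\[
\boh_0(F)_V \;=\; F/\omega_V F \;\simeq\; \Z_p[V\backslash G]\hotimes\Z_p\dbl X\dbr,
\]
with $U$ acting by left multiplication on the first factor. The key observation is that since $V$ is normal in $U$, every $U/V$-orbit on the finite $U$-set $V\backslash G$ is free: picking a system $T$ of right $U$-coset representatives produces a $U/V$-equivariant bijection $V\backslash G\simeq (U/V)\times T$, whence
\[
\boh_0(F)_V\;\simeq\;\Z_p[U/V]\otimes_{\Z_p} A,\qquad A\;=\;\Z_p[T]\hotimes\Z_p\dbl X\dbr,
\]
exhibiting $\boh_0(F)_V$ as a $U/V$-module induced from the trivial subgroup.

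Since $U/V$ is a finite group, any such induced module is cohomologically trivial, and in particular $\hH^{-1}(U/V,\boh_0(F)_V)=\hH^0(U/V,\boh_0(F)_V)=0$. On the other hand, formula \eqref{eq:ihdown} identifies $i_{U,V}^{\boh_0(F)}$ with the composition of the canonical isomorphism $F/\omega_U F\simeq (F/\omega_V F)_{U/V}$ and the norm map $N_{U/V}\colon(F/\omega_V F)_{U/V}\to(F/\omega_V F)^{U/V}$, under which the kernel corresponds to $\hH^{-1}(U/V,\boh_0(F)_V)$ and the quotient of the fixed points by the image corresponds to $\hH^0(U/V,\boh_0(F)_V)$. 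Hence $\bok^0(U/V,\boh_0(F))=\bok^1(U/V,\boh_0(F))=0$, and $\boh_0(F)$ is of type $H^0$ by Fact~\ref{fact:vank01}. The only conceptual step is the free-orbit decomposition of $V\backslash G$: once each $\boh_0(F)_V$ is exhibited as an induced $U/V$-module the required cohomological triviality is immediate, and no delicate inverse-limit arguments are needed.
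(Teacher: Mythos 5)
Your argument is correct, and its essential content coincides with the paper's: for a normal section $(U,V)$ one shows that the $U/V$-module $Q_V=\boh_0(Q)_V$ is cohomologically trivial, so that $\bok^0(U/V,\boh_0(Q))\simeq\hH^{-1}(U/V,Q_V)$ and $\bok^1(U/V,\boh_0(Q))\simeq\hH^{0}(U/V,Q_V)$ both vanish, and Fact~\ref{fact:vank01} finishes the proof. The two proofs diverge in how these two points are established. The paper argues functorially: $\rst^G_U$ preserves projectives, and $\argu_V=\F_p[U/V]\hotimes_U\argu$, being left adjoint to the exact inflation functor, also preserves projectives, so $Q_V$ is a \emph{projective} $\F_p[U/V]$-module with vanishing Tate cohomology; the identification of $\bok^0,\bok^1$ with $\hH^{-1},\hH^{0}$ is then read off from the six-term sequence \eqref{eq:6term} combined with the automatic type-$H_0$ vanishing $\boc_0=\boc_1=0$. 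You instead reduce by additivity to a free module $F=\ZpG\hotimes\Z_p\dbl X\dbr$ and exhibit $F/\omega_VF\simeq\Z_p[V\backslash G]\hotimes\Z_p\dbl X\dbr$ as \emph{induced} from the trivial subgroup of $U/V$, using that the stabilizer of $Vg$ under $u\cdot Vg=Vug$ is exactly $V$ (which indeed requires, and only requires, $V$ normal in $U$); and you obtain the Tate-cohomology identification directly from the norm-map description of $i_{U,V}$ in \eqref{eq:ihdown}, which is equivalent to the six-term-sequence step. Your route is more explicit and avoids the adjunction argument, at the price of invoking the structure theory of projective profinite modules (every projective is a direct summand of a free module on a profinite space), a standard fact from Brumer's theory; the paper's route handles an arbitrary projective directly. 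The $\FpG$- and $\ZpG$-cases are of course parallel in both treatments.
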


\begin{proof}
By definition and Fact~\ref{fact:vanc0}, it suffices to show that $\boh_0(Q)$ is of type $H^0$.
Let $(U,V)$ be a normal section in $G^\sharp$.
As restriction maps projectives to projectives, $\rst^G_U(Q)$ is a
projective, profinite left $\F_p\dbl U\dbr$-module.
Since $\argu_V=\F_p[U/V]\hotimes_U\argu$ is the left-adjoint of the inflation functor $\ifl_{U/V}^U(\argu)$,
it maps projectives to projectives (cf. \cite[Prop.~2.3.10]{weib:hom}).
Therefore $\boh_0(Q)_V=Q_V$ is a projective left $\F_p[U/V]$-module, and
thus $\hH^{-1}(U/V,Q_V)=\hH^0(U/V,Q_V)=0$.
As $\boh_0(Q)$ is of type $H_0$, one has
\begin{equation}
\label{eq:vanproj}
\boc_0(U/V,\rst^{G^\sharp}_{(U/V)^\circ}(\boh_0(Q)))=
\boc_1(U/V,\rst^{G^\sharp}_{(U/V)^\circ}(\boh_0(Q)))=0.
\end{equation}
The exact 6-term sequence \eqref{eq:6term} implies that 
\begin{equation}
\label{eq:vanproj2}
\bok^0(U/V,\rst^{G^\sharp}_{(U/V)^\circ}(\boh_0(Q)))=
\bok^1(U/V,\rst^{G^\sharp}_{(U/V)^\circ}(\boh_0(Q)))=0.
\end{equation}
Thus Fact~\ref{fact:vank01} yields the claim. The proof for 
projective, profinite left $\ZpG$ can be transferred verbatim.
\end{proof}

\begin{rem}
\label{rem:inddefl}
Let $G$ be a profinite group, let $H$ be a closed subgroup of $G$,
and let $N$ be a closed normal subgroup of $G$ containing $H$.
Put $\baG=G/N$.
Let $\boT=\boT(H,\F_p)$ be the cohomological $H^\sharp$-Mackey
functor described in Example~\ref{ex:1}(a), and put
$\boX=\idn_{H^\sharp}^{G^\sharp}(\boT)$. 
Then one has an isomorphism
\begin{equation}
\label{eq:inddefl}
\boX\simeq\boh^0(\coind_H^G(\F_p))
\end{equation}
(cf. Ex.~\ref{ex:hs}(c)). In particular, $\boX$ is of type $H^0$.
Let $\boY=\rst^{G^\sharp}_{\baG^\sharp}(\boX)$ (cf. \eqref{eq:res2}).
For any open subgroup $U$ of $G$ containing $N$, one has
$\boX_U=\F_p[G/U]$ (cf. Ex.~\ref{ex:ind}). Moreover, if $V$ is an open subgroup
of $G$ such that $N\subseteq V\subseteq U$, one has
$H\cap {}^sU=H\cap{}^sV=H$ for all $s\in G$, i.e.,
$t_{V,U}^\boX\colon \boX_V\to\boX_U$ is the canonical map (cf. \eqref{eq:indT3}).
Hence $\boY\simeq\boh_0(\F_p\dbl\baG\dbr)$. In a similar fashion one shows that
\begin{equation}
\label{eq:inddefl2}
\rst^{G^\sharp}_{\baG^\sharp}(\idn_{H^\sharp}^{G^\sharp}(\boT(H,\Z_p)))\simeq
\boh_0(\Z_p\dbl\baG\dbr).
\end{equation}
\end{rem}

\subsection{Injectivity criteria}
\label{ss:injcrit}
For a pro-$p$ group $G$ there are very useful criteria ensuring that
a homomorphism $\phi\colon\boX\to\boY$ of cohomological Mackey functors
with values in the category $\cvec$ or $\prfp$
is injective. For cohomological Mackey functors with values in $\cvec$ one has the following.

\begin{lem}
\label{lem:injmod}
Let $G$ be a pro-$p$ group, and  let $\caM$ be a $G$-Mackey system containing $G$. Suppose that 
for 
$\phi\colon\boX\to\boY\in\mor(\MC_G(\caM,\cvec))$ one has that
\begin{itemize}
\item[(i)] $j_\boY\circ\phi_G\colon \boX_G\to\inrst(\boY)$ is injective  
\textup{(}cf. \textup{\eqref{eq:j})}, and
\item[(ii)] $\boX$ is terminally of type $H^0$.
\end{itemize}
Then $\phi$ is injective.
\end{lem}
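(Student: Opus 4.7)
The plan is to show the kernel subfunctor $\boK=\kernel(\phi)\subseteq\boX$ vanishes. Since $\boX$ is $i$-injective (being terminally of type $H^0$), so is the subfunctor $\boK$, hence $\boK_V\hookrightarrow\boK_W$ whenever $W\subseteq V$ in $\caM$. It therefore suffices to prove $\boK_W=0$ for each $W\in\caM^\natural$: given arbitrary $V\in\caM$, the openness of $V$ in $G$ means it has only finitely many $G$-conjugates, and by (MS$_1$) and (MS$_2$) the intersection $W=\bigcap_{g\in G}{}^g V$ lies in $\caM$ and is open and normal in $G$, giving $W\in\caM^\natural$ with $W\subseteq V$ and hence $\boK_V\hookrightarrow\boK_W=0$.

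For the main step, I would fix $W\in\caM^\natural$ and suppose for contradiction that $\boK_W\neq 0$. Since $G$ is pro-$p$ and $W$ is open normal, $G/W$ is a finite $p$-group acting continuously on the nonzero compact $\F_p$-vector space $\boK_W\subseteq\boX_W$, so Fact~\ref{fact:soctriv} yields $\boK_W\cap\soc_{G/W}(\boX_W)\neq 0$. Hypothesis (ii) enters through Fact~\ref{fact:propterm}, which identifies $\soc_{G/W}(\boX_W)=\image(i_{G,W}^\boX)$. Hence there exists $y\in\boX_G$ with $0\neq i_{G,W}^\boX(y)\in\boK_W$, and in particular $y\neq 0$. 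Naturality of $\phi$ then gives $i_{G,W}^\boY(\phi_G(y))=\phi_W(i_{G,W}^\boX(y))=0$, so $\phi_G(y)$ already dies in $\boY_W$ and thus $j_\boY(\phi_G(y))=0$ in the colimit $\inrst(\boY)=\varinjlim_{U\in\caM}\boY_U$. This contradicts the injectivity of $j_\boY\circ\phi_G$ required by (i), so $\boK_W=0$ and we are done.

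The engine of the argument is Fact~\ref{fact:propterm}: being terminally of type $H^0$ is exactly what detects the entire $G/W$-socle of $\boX_W$ from the image of $\boX_G$, and this is what converts the single top-level injectivity hypothesis (i) into injectivity of $\phi_V$ at every $V\in\caM$. I anticipate no serious obstacle beyond bookkeeping: the combinatorial reduction from general $V\in\caM$ to $W\in\caM^\natural$ uses only closure of $\caM$ under conjugation and finite intersection, and the socle--image identification is packaged in a cited fact.
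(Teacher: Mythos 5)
Your proposal is correct and follows essentially the same route as the paper: the reduction from arbitrary $V\in\caM$ to the normal core $W\in\caM^\natural$ via $i$-injectivity, the use of Fact~\ref{fact:soctriv} to force a nonzero kernel to meet $\soc_{G/W}(\boX_W)$, and the identification $\soc_{G/W}(\boX_W)=\image(i_{G,W}^{\boX})$ from Fact~\ref{fact:propterm} are exactly the paper's ingredients, merely repackaged as a contradiction argument on the kernel subfunctor rather than a direct verification that each $\phi_V$ is injective.
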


\begin{proof}
By hypothesis (i), $i^{\boY}_{G,U}\circ\phi_G\colon\boX_G\to\boY_U$ is injective for
all $U\in\caM$.
First we show that $\rst^\caM_{\caM^\natural}(\phi)\colon\rst^\caM_{\caM^\natural}(\boX)\to
\rst^\caM_{\caM^\natural}(\boY)$ is injective.
For $V\in\caM^\natural$ one has a commutative diagram
\begin{equation}
\label{dia:inj1}
\xymatrix{
\boX_G\ar[r]^{\phi_G}\ar[d]_{i^{\boX}_{G,V}}&\boY_G\ar[d]^{i^{\boY}_{G,V}}\\
\boX_V\ar[r]^{\phi_V}&\boY_V
}
\end{equation}
As $\boX$ is terminally of type $H^0$, $\image(i_{G,V}^{\boX})=\soc_{G/V}(\boX_V)$
(cf. Fact~\ref{fact:propterm}).
Since $\phi_V\circ i^{\boX}_{G,V}=i^{\boY}_{G,V}\circ\phi_G$ is injective,
$\phi_V\vert_{\soc_{G/V}(\boX_V)}\colon\soc_{G/V}(\boX_V)\to\boY_V$ is injective.
In particular, one has 
$\kernel(\phi_V)\cap\soc_{G/V}(\boX_V)=0$ which implies that $\kernel(\phi_V)=0$
(cf. Fact~\ref{fact:soctriv}).
Thus $\phi_V$ is injective.

Let $U\in\ca{M}$.  There exists $V\in\caM^\natural$ such that
$V\subseteq U$, and one has a commutative diagram
\begin{equation}
\label{dia:inj2}
\xymatrix{
\boX_U\ar[r]^{\phi_U}\ar[d]_{i^{\boX}_{U,V}}&\boY_U\ar[d]^{i^{\boY}_{U,V}}\\
\boX_V\ar[r]^{\phi_V}&\boY_V
}
\end{equation}
Thus as $i_{U,V}^{\boX}$ and $\phi_V$ are injective, $\phi_U$ is injective.
This yields the claim.
\end{proof}

Another version of Lemma~\ref{lem:injmod} is the following.

\begin{cor}
\label{cor:injmod2}
Let $G$ be a pro-$p$ group, and let $\caM$ be a $G$-Mackey system containing $G$.
Let $\phi\colon\boX\to\boY\in\mor(\MC_G(\caM,\cvec))$ be a homomorphism of cohomological
$\caM$-Mackey functors satisfying:
\begin{itemize}
\item[(i)] $\phi_G\colon \boX_G\to\boY_G$ is injective;
\item[(ii)] $\boY$ is terminally $i$-injective;
\item[(iii)] $\boX$ is terminally of type $H^0$.
\end{itemize}
Then $\phi$ is injective.
\end{cor}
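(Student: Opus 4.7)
The plan is to derive the corollary directly from Lemma~\ref{lem:injmod} by checking that the composite hypothesis (i) of that lemma is an immediate consequence of the two hypotheses (i) and (ii) of the corollary.

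First I would recall that $\boY$ is terminally $i$-injective means, by definition \eqref{eq:j} and the paragraph preceding it, that the canonical map $j_\boY\colon \boY_G\to\inrst(\boY)$ is injective. Combining this with hypothesis (i) of the corollary, the composite
\begin{equation*}
j_\boY\circ\phi_G\colon \boX_G\xrightarrow{\phi_G}\boY_G\xrightarrow{j_\boY}\inrst(\boY)
\end{equation*}
is a composition of two injective maps of $\F_p$-vector spaces and is therefore itself injective. This verifies hypothesis (i) of Lemma~\ref{lem:injmod} for the morphism $\phi$. Hypothesis (ii) of Lemma~\ref{lem:injmod}, namely that $\boX$ is terminally of type $H^0$, is identical to hypothesis (iii) of the corollary.

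Hence both hypotheses of Lemma~\ref{lem:injmod} are met, and the lemma yields that $\phi$ is injective, which is the claim. There is no genuine obstacle here; the corollary is essentially a repackaging of Lemma~\ref{lem:injmod} in which the two injectivity requirements on the composite are split into the separately verifiable conditions (i) (injectivity at the top group $G$) and (ii) (terminal $i$-injectivity of the target functor $\boY$). The only thing to keep in mind is to use the characterization of ``terminally $i$-injective'' via the canonical map $j_\boY$ rather than via all $i^\boY_{G,U}$, but these are equivalent by definition since injectivity of $j_\boY$ is equivalent to injectivity of $i^\boY_{G,U}$ for all $U\in\caM$.
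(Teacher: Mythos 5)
Your proposal is correct and is essentially identical to the paper's own proof: the paper likewise observes that terminal $i$-injectivity of $\boY$ means $j_{\boY}$ is injective, so $j_{\boY}\circ\phi_G$ is injective and Lemma~\ref{lem:injmod} applies directly. Nothing further is needed.
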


\begin{proof}
As $\boY$ is terminally $i$-injective, the canonical map
$j_{\boY}\colon\boY_G\to\inrst(\boY)$ is injective.
The claim is therefore a direct consequence of Lemma~\ref{lem:injmod}.
\end{proof}

From Fact~\ref{fact:elemab2} one concludes the following criterion for split-injectivity.

\begin{lem}
\label{lem:inj}
Let $G$ be a pro-$p$ group, and let $\caM$ be a $G$-Mackey system
containing $G$.
Let $\phi\colon \boX\to\boY\in \mor(\MC_G(\caM,\fmod))$ be a homomorphism
of cohomological $\caM$-Mackey functors with values in the category of finitely generated
$\Z_p$-modules with the following properties:
\begin{itemize}
\item[(i)] $\boX_U$ and $\boY_U$ are torsion free $\Z_p$-modules for every $U\in\caM$;
\item[(ii)] $\gr_0(\boX)$ is terminally of type $H^0$;
\item[(iii)] the canonical map 
$j_{\gr_0(\boY)}\circ\gr_0(\phi_G)\colon \gr_0(\boX_G)\to\inrst(\gr_0(\boY))$
is injective.
\end{itemize}
Then $\phi_U\colon\boX_U\to\boY_U$ is split-injective for every $U\in\caM$.
\end{lem}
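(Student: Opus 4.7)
The plan is to reduce the lemma directly to Lemma~\ref{lem:injmod} by passing to the mod-$p$ quotients via the functor $\gr_0$. The key linking observation is Fact~\ref{fact:elemab2}: for a homomorphism of finitely generated torsion-free $\Z_p$-modules, split-injectivity is detected by injectivity of $\gr_0$. Since every $\boX_U$ and $\boY_U$ is torsion-free and finitely generated by hypothesis~(i), showing that $\phi_U$ is split-injective for every $U\in\caM$ is therefore equivalent to showing that $\gr_0(\phi_U)\colon \gr_0(\boX_U)\to \gr_0(\boY_U)$ is injective for every $U\in\caM$.

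First I would verify that $\gr_0$ can be applied at the level of cohomological Mackey functors. Since $\argu/p\argu$ is a right exact additive functor from $\prfp$ (or $\fmod$) to $\cvec$, and since the structure maps $i^{\boX}_{U,V}$, $t^{\boX}_{V,U}$, $c^{\boX}_{g,U}$ are $\Z_p$-linear, they descend to $\F_p$-linear maps on the $\gr_0$-layers and continue to satisfy (cMF$_1$)--(cMF$_7$). Moreover, finitely generated and torsion-free implies that each $\gr_0(\boX_U)$ is a finite-dimensional $\F_p$-vector space, hence an object of $\cvec$. Thus $\gr_0(\boX),\gr_0(\boY)\in\ob(\MC_G(\caM,\cvec))$ and $\gr_0(\phi)$ is a morphism between them.

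Next I would check that the hypotheses of Lemma~\ref{lem:injmod} are satisfied for $\gr_0(\phi)$: hypothesis (ii) of the present lemma is precisely the assumption that $\gr_0(\boX)$ is terminally of type $H^0$; hypothesis (iii) is exactly the injectivity of $j_{\gr_0(\boY)}\circ\gr_0(\phi_G)$. Applying Lemma~\ref{lem:injmod} yields that $\gr_0(\phi)\colon\gr_0(\boX)\to\gr_0(\boY)$ is injective, i.e., $\gr_0(\phi_U)$ is injective for every $U\in\caM$. Combined with Fact~\ref{fact:elemab2}, this gives that $\phi_U$ is split-injective for every $U\in\caM$, as required.

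There is no substantial obstacle: once the correct functorial setup is in place, the argument is a packaging of Lemma~\ref{lem:injmod} through Fact~\ref{fact:elemab2}. The only point that requires brief care is the verification that $\gr_0$ preserves the cohomological Mackey functor structure and lands in $\cvec$ under the torsion-freeness and finite-generation assumptions; this is routine since $\gr_0$ is additive and the Mackey axioms are $\Z$-linear identities.
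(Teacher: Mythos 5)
Your argument is correct and follows essentially the same route as the paper: apply Lemma~\ref{lem:injmod} to $\gr_0(\phi)$ using hypotheses (ii) and (iii), then invoke Fact~\ref{fact:elemab2} together with hypothesis (i) to upgrade injectivity of $\gr_0(\phi_U)$ to split-injectivity of $\phi_U$. The paper additionally records, via the commutative square with the multiplication-by-$t^k$ isomorphisms, that $\gr_k(\phi_U)$ is injective for all $k>0$, but as Fact~\ref{fact:elemab2} is stated this step is not needed and your omission of it is harmless.
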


\begin{proof} 
By hypothesis (ii) and (iii), $\gr_0(\phi)\colon\gr_0(\boX)\to\gr_0(\boY)$ satisfies the
hypothesis of Lemma~\ref{lem:injmod}, and thus is injective.
Let $U\in\caM$ and $k>0$. Then one has a commutative diagram
\begin{equation}
\label{eq:lemtorf}
\xymatrix@C=1.8truecm{
\gr_0(\boX_U)\ar[r]^{\gr_0(\phi_U)}\ar[d]_{t^k.}&\gr_0(\boY_U)\ar[d]^{t^k.}\\
\gr_k(\boX_U)\ar[r]^{\gr_k(\phi_U)}&\gr_k(\boY_U)
}
\end{equation}
where the vertical maps are isomorphisms by hypothesis (i). Hence $\gr_k(\phi_U)$ is injective for all 
$k\geq 0$.
Thus, by Fact~\ref{fact:elemab2}, $\phi_U\colon\boX_U\to\boY_U$ is split-injective.
\end{proof}


\subsection{A dimension theorem}
\label{ss:dimthm}
For a pro-$p$ group $G$
one has the following theorem.

\begin{thm}
\label{thm:dim}
Let $G$ be a pro-$p$ group, let $\caB$ be a basis of neighbourhoods of $1\in G$ 
consisting of open subgroups of $G$,
and let $M$ be a profinite left $\FpG$-module satisfying $H_d(G,M)\not=0$, $d\geq 0$. Then
the following are equivalent.
\begin{itemize}
\item[(i)] $\pdim(M)=d$;
\item[(ii)] $\cores_{G,U}\colon H_d(G,M)\longrightarrow H_d(U,\rst^G_U(M))$
is injective for all $U\in\caB$.
\end{itemize}
\end{thm}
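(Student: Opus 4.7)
The implication (i) $\Rightarrow$ (ii) is immediate from Fact~\ref{fact:pdim}: if $\pdim(M)=d$ then $\boh_d(M)$ is $i$-injective, so in particular each $\cores_{G,U}=i^{\boh_d(M)}_{G,U}$ is injective. For the converse, fix a minimal projective resolution $(P_\bullet,\partial_\bullet,\eps_M)$ of $M$, set $Q=\image(\partial_d)$ and $N=\kernel(\partial_d)=\image(\partial_{d+1})$. Since $H_d(G,M)\neq 0$ forces $\pdim(M)\geq d$, the plan is to prove $N=0$, whence $P_{d+1}=0$ and $\pdim(M)\leq d$.

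First, one upgrades hypothesis (ii) from $\caB$ to all open subgroups: for any open $U$ pick $V\in\caB$ with $V\subseteq U$; then $\cores_{G,V}=\cores_{U,V}\circ\cores_{G,U}$ being injective forces $\cores_{G,U}$ to be injective. Hence $\boh_d(M)$ is terminally $i$-injective on the full Mackey system $G^\sharp$. Next, the short exact sequence $0\to Q\to P_{d-1}\to\image(\partial_{d-1})\to 0$, together with the vanishing of $\boh_1(P_{d-1})$ (a consequence of the projectivity of $P_{d-1}$), embeds $\boh_d(M)$ as a sub-Mackey functor of $\boh_0(Q)$ via the long exact sequence of the $\boh_\bullet$. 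Minimality (Fact~\ref{fact:trivdiff2}) gives $Q\subseteq\omega_G P_{d-1}$, so the map $\boh_0(Q)_G\to\boh_0(P_{d-1})_G$ vanishes and $\boh_d(M)_G=\boh_0(Q)_G$. It follows that $\boh_0(Q)$ itself is terminally $i$-injective on $G^\sharp$.

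The decisive step is then to apply Corollary~\ref{cor:injmod2} (with $\caM=G^\sharp$) to the morphism $\phi\colon\boh_0(P_d)\to\boh_0(Q)$ induced by $\partial_d\colon P_d\twoheadrightarrow Q$. Its three hypotheses are verified in turn: $\phi_G$ is the head map of the projective cover $P_d\twoheadrightarrow Q$ (since minimality also yields $N\subseteq\omega_G P_d$), hence an isomorphism; $\boh_0(Q)$ is terminally $i$-injective by the previous paragraph; and $\boh_0(P_d)$ is of type $H^0$ by Fact~\ref{fact:projct}, in particular terminally of type $H^0$. The corollary therefore yields the injectivity of $\phi$.

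At each open $U$ the surjection $\phi_U\colon P_d/\omega_U P_d\twoheadrightarrow Q/\omega_U Q$ has kernel $(N+\omega_U P_d)/\omega_U P_d$, which must vanish, so $N\subseteq\omega_U P_d$. Since $P_d$ is profinite, every open $\FpG$-submodule $L\subseteq P_d$ contains $\omega_W P_d$ for some open subgroup $W\subseteq G$ acting trivially on the finite quotient $P_d/L$; hence $\bigcap_{U\in G^\sharp}\omega_U P_d=0$ and $N=0$. The main obstacle in this strategy is pinpointing the correct morphism $\phi$: the hypothesis only concerns a single Mackey functor, whereas the corollary demands a map between two, and the surjection $\boh_0(P_d)\twoheadrightarrow\boh_0(Q)$ coming from the projective cover $P_d\twoheadrightarrow Q$ turns out to be exactly the one whose injectivity encodes $N=0$.
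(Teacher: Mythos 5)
Your proof is correct and follows essentially the same route as the paper: both arguments reduce to the minimal projective resolution, use Fact~\ref{fact:projct} to get the type-$H^0$ property of $\boh_0(P_d)$, and invoke the socle-based injectivity mechanism of \S\ref{ss:injcrit} to kill $\image(\boh_0(\der_{d+1}))$. The only difference is packaging: the paper inlines the socle argument on the quotient map $\kernel(\boh_0(\der_d))\to\boh_d(M)$ and finishes via exactness of $\prorst$ together with $t$-surjectivity, whereas you cite Corollary~\ref{cor:injmod2} for the equivalent map $\boh_0(P_d)\to\boh_0(\image(\der_d))$ and finish via $\bigcap_U\omega_U P_d=0$ --- these have identical mathematical content.
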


\begin{proof} 
By Fact~\ref{fact:pdim}, it suffices to show that (ii) implies (i).
Since $\caB$ is a basis, (ii) implies that 
$\cores_{G,U}\colon H_d(G,M)\longrightarrow H_d(U,\rst^G_U(M))$
is injective for every open subgroup $U$ of $G$.
Let $(P_\bullet,\der_\bullet,\eps_M)$ be a minimal
projective resolution of $M$ in $\FGprf$ (cf. Fact~\ref{fact:minproj}). Then
$(\boh_0(P_k),\boh_0(\der_k))$
is a chain complex of coho\-mo\-logical $G^\sharp$-Mackey functors
with values in $\cvec$. Furthermore, for all $k\geq 0$
one has the short exact sequences
of cohomological $G^\sharp$-Mackey functors
\begin{equation}
\label{eq:dimth1}
\xymatrix@R=3pt{
0\ar[r]&\kernel(\boh_0(\der_k))\ar[r]^-{\iota^k}&
\boh_0(P_k)\ar[r]^-{\pi^k}&\image(\boh_0(\der_k))\ar[r]&0;\\
0\ar[r]&\image(\boh_0(\der_{k+1}))\ar[r]^-{\alpha^k}&
\kernel(\boh_0(\der_k))\ar[r]^-{\beta^k}&\boh_k(M)\ar[r]&0;
}
\end{equation}
where $\iota_k$ is given by inclusion. 
By Fact~\ref{fact:projct}, $\boh_0(P_k)$ is of type $H^0$ and $H_0$.
Hence $\kernel(\boh_0(\der_k))$ is 
$i$-injective for all $k\geq 0$ (cf. \S\ref{ss:cohmacprof1}).

As $(P_\bullet,\der_\bullet,\eps_M)$ is a minimal projective resolution,
$(\boh_0(P_k)_G,\boh_0(\der_k)_G)$ is a chain complex with values in $\cvec$
with trivial differential (cf. Fact~\ref{fact:trivdiff2}).
Thus $\iota^k_G$ and $\beta^k_G$ are isomorphisms for all $k\geq 0$.

Let $U$ be an open normal subgroup of $G$. As $\boh_0(P_d)$ is of type $H^0$,
and since $\iota^d_G$ is an isomorphism, one has
\begin{equation}
\label{eq:dimth2}
\begin{aligned}
S_U&=\soc_{G/U}(\boh_0(P_d)_U)=\image(i_{G,U}^{\boh_0(P_d)})\\
&=
\image(i_{G,U}^{\kernel(\boh_0(\der_d))})=
\soc_{G/U}(\kernel(\boh_0(\der_d))_U).
\end{aligned}
\end{equation}
Suppose that $U\in\caM$ and consider the commutative diagram
\begin{equation}
\label{eq:dimth3}
\xymatrix{
\kernel(\boh_0(\der_d))_G\ar[r]^{\beta^d_G}\ar[d]_{i_{G,U}^{\kernel(\boh_0(\der_d))}}&
\boh_d(M)_G\ar[d]^{i^{\boh_d(M)}_{G,U}}\\
\kernel(\boh_0(\der_d))_U\ar[r]^{\beta^d_U}&\boh_d(M)_U.
}
\end{equation} 
By hypothesis, $i^{\boh_d(M)}_{G,U}$ is injective. Furthermore,
$i_{G,U}^{\kernel(\boh_0(\der_d))}$ is injective, and
$\beta_G^d$ is an isomorphism.
Thus by \eqref{eq:dimth3},
$\beta^d_U\vert_{S_U}\colon S_U\to \boh_d(M)_U$ must be injective. 
Hence $\kernel(\beta^d_U)$ has trivial intersection with $S_U$, and thus
$\kernel(\beta^d_U)=0$ (cf. Fact~\ref{fact:soctriv}), i.e., $\beta^d_U$ is injective.
From the exact diagram \eqref{eq:dimth1} one concludes that
$\alpha^d_U=0$. In particular, $\prorst(\alpha^d)=0$.

Since $\prorst(\argu)$ is exact (cf. \S\ref{ss:CMprof}), one concludes that
$\prorst(\image(\boh_0(\der_{d+1})))=0$.
Moreover, as $\image(\boh_0(\der_{d+1}))$ is a homomorphic image of $\boh_0(P_{d+1})$,
the cohomological $G^\sharp$-Mackey functor
$\image(\boh_0(\der_{d+1}))$ is $t$-surjective (cf. \S\ref{ss:comachdown}). Hence  
$\image(\boh_0(\der_{d+1}))=0$ (cf. Fact~\ref{fact:tsur}),
i.e., $\boh_0(\der_{d+1})\colon \boh_0(P_{d+1})\to\boh_0(P_d)$ is the $0$-map.
Thus 
\begin{equation}
\label{eq:dimthmder}
\prorst(\boh_0(\der_{d+1}))=\der_{d+1}\colon P_{d+1}\to P_d
\end{equation} 
is the $0$-map
(cf. Rem.~\ref{rem:hdown}).
The minimality of $(P_\bullet,\der_\bullet,\eps_M)$ then implies that $P_{d+k}=0$ for all $k\geq 1$.
\end{proof}


\subsection{Free pro-$p$ groups}
\label{ss:free}
For a pro-$p$ group $G$ we denote by
$\El=\El(G)$ the cohomological $G^\sharp$-Mackey functor $\boh_1(\F_p)$, i.e.,
$\El_U=U^{\ab,\el}$ for all $U\in G^\sharp$.
As a consequence of Theorem~\ref{thm:dim} one has the following
characterization of free pro-p groups.

\begin{cor}
\label{cor:thdim}
Let $G$ be a pro-$p$ group, and let $\caB$ be a basis of neighbourhoods
of $1\in G$ consisting of open subgroups of $G$. 
Then the following are equivalent:
\begin{itemize}
\item[(i)] $G$ is a free pro-$p$ group.
\item[(ii)] $i^{\El}_{U,V}\colon U^{\ab,\el}\to V^{\ab,\el}$ is injective for all $U,V\in G^\sharp$, 
$V\subseteq U$.
\item[(iii)] $i^{\El}_{G,U}\colon G^{\ab,\el}\to U^{\ab,\el}$ is injective for all $U\in\caB$.
\item[(iv)] The canonical map $j_{\El}\colon G^{\ab,\el}\to D_1(\F_p)$ is injective.
\end{itemize}
\end{cor}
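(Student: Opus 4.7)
The plan is to run the cycle (i)$\Rightarrow$(ii)$\Rightarrow$(iii)$\Rightarrow$(i) using Theorem~\ref{thm:dim} applied to $M=\F_p$ with $d=1$, together with the classical characterisation of free pro-$p$ groups as those $G$ for which $\pdim_{\FpG}(\F_p)\le 1$; the additional equivalence (iii)$\Leftrightarrow$(iv) will then fall out of a short direct-limit argument.

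For (i)$\Rightarrow$(ii): the trivial case $G=\triv$ is vacuous, and otherwise the classical characterisation forces $\pdim(\F_p)=1$, so Fact~\ref{fact:pdim} yields that $\El=\boh_1(\F_p)$ is $i$-injective, which is exactly (ii). The implication (ii)$\Rightarrow$(iii) is immediate, specialising to $U=G$ and $V\in\caB$. For (iii)$\Rightarrow$(i): if $G^{\ab,\el}=0$ then $G=\triv$ by the Burnside-type argument for pro-$p$ groups (the maximal elementary abelian quotient detects triviality), and we are done; otherwise $H_1(G,\F_p)\neq 0$, so Theorem~\ref{thm:dim} applies and hypothesis (iii) forces $\pdim(\F_p)=1$, whence $G$ is a free pro-$p$ group.

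For (iii)$\Leftrightarrow$(iv): for every open $U\subseteq G$ the canonical map factorises as $j_\El=j_{\El,U}\circ i^\El_{G,U}$, so that injectivity of $j_\El$ at once forces injectivity of each $i^\El_{G,U}$, giving (iv)$\Rightarrow$(iii). Conversely, any element of $\kernel(j_\El)$ is killed by some $i^\El_{G,V}$; cofinality of $\caB$ and the composition property of transfers let us replace $V$ by a subgroup $U\in\caB$ with $U\subseteq V$, so that $i^\El_{G,U}$ also annihilates the element, and hypothesis (iii) then forces it to be zero, giving (iii)$\Rightarrow$(iv). The only nonroutine ingredient in the whole argument is Theorem~\ref{thm:dim}; everything else reduces to bookkeeping or to the standard cohomological characterisation of freeness for pro-$p$ groups.
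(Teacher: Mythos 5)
Your proposal is correct and follows essentially the same route as the paper: (i)$\Rightarrow$(ii) via Fact~\ref{fact:pdim} together with the identification of freeness with $\ccd_p(G)=\pdim_{\FpG}(\F_p)=1$, (ii)$\Rightarrow$(iii) trivially, (iii)$\Rightarrow$(i) via Theorem~\ref{thm:dim} with $M=\F_p$ and $d=1$, and (iii)$\Leftrightarrow$(iv) from the description of $D_1(\F_p)$ as the direct limit $\varinjlim_U U^{\ab,\el}$ along transfer maps. Your explicit handling of the degenerate case $G^{\ab,\el}=0$ (forcing $G=\triv$) and the spelled-out cofinality argument for (iii)$\Leftrightarrow$(iv) are details the paper leaves implicit, but the argument is the same.
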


\begin{proof}
A non-trivial pro-$p$ group $G$ is free if, and only if, $\ccd_p(G)=1$ (cf. \cite[\S I.4.2, Cor.~2]{ser:gal}).
For a profinite group the cohomological $p$-dimension coincides with the
projective dimension of the trivial profinite $\FpG$-module $\F_p$ (cf. \cite[Prop.~7.1.4]{ribzal:prof}).
The implication (i)$\Rightarrow$(ii) follows from Fact~\ref{fact:pdim},
(ii)$\Rightarrow$(iii) is trivial, and (iii)$\Rightarrow$(i) follows from Theorem~\ref{thm:dim}.
The equivalence (iii)$\Leftrightarrow$(iv) is a direct consequence of the definition of
$D_1(\F_p)$ (cf. \eqref{eq:tateD}).
\end{proof}


\section{Ends and directions}
\label{s:dir}
In this section we will denote by $1_{\Z_p}\in\Z_p$ a fixed
generator of the additive group of the $p$-adic integers.

\subsection{Pro-$p$ groups with an $\F_p$-direction}
\label{ss:Fpdir}
For a pro-$p$ group with an $\F_p$-direction one has the following.

\begin{thm}
\label{thm:Fpdir}
Let $G$ be a pro-$p$ group with an $\F_p$-direction $\facr{G}{\Z_p}{\tau}{\sigma}$.
Let $\Sigma=\image(\sigma)$, and let $N=\cl(\langle\,{}^g\Sigma\mid g\in G\,\rangle)$
denote the closure of the normal closure of $\Sigma$. Then one has the following:
\begin{itemize}
\item[(a)] $N$ is a free pro-$p$ group.
\item[(b)] Let $\baG=G/N$. Then one has a canonical isomorphism 
\begin{equation}
\label{eq:spec1}
N/\Phi(N)\simeq\F_p\dbl\baG\dbr
\end{equation}
of profinite, left $\F_p\dbl\baG\dbr$-modules.
\item[(c)] The extension of pro-$p$ groups
\begin{equation}
\label{eq:spec2}
\xymatrix{
\triv\ar[r]&N/\Phi(N)\ar@{^(->}[r]&G/\Phi(N)\ar@{-{>>}}[r]&\baG\ar[r]&\triv
}
\end{equation}
splits.
\end{itemize}
\end{thm}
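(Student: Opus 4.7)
The plan is to prove part~(b) via Theorem~\ref{thm:dim}, to derive~(a) from~(b) via Corollary~\ref{cor:thdim}, and to obtain~(c) from Proposition~\ref{prop:splitex}.

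\textit{Step 1 (part b).} Since $N$ is the closed normal closure of $\Sigma=\cl\langle\sigma(1)\rangle$ in $G$, the image $\bar\sigma\in N^{\ab,\el}$ topologically generates $N^{\ab,\el}$ as a left $\F_p\dbl G\dbr$-module; because $[N,N]\subseteq\Phi(N)$, the $G$-action on $\bar\sigma$ factors through $\baG$, and we obtain a natural surjection $\mu\colon\F_p\dbl\baG\dbr\twoheadrightarrow N^{\ab,\el}$ of profinite left $\F_p\dbl\baG\dbr$-modules. To prove $\mu$ is an isomorphism I would apply Theorem~\ref{thm:dim} to the $\F_p\dbl\baG\dbr$-module $N^{\ab,\el}$ with $d=0$. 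Nontriviality $H_0(\baG,N^{\ab,\el})\neq 0$ follows from Nakayama's lemma since $N^{\ab,\el}$ is nonzero and cyclic, and in degree zero the corestriction $\cores_{\baG,\baU}$ is given by the norm $\bar\sigma+\omega_\baG\cdot N^{\ab,\el}\mapsto N_{\baG/\baU}(\bar\sigma)+\omega_\baU\cdot N^{\ab,\el}$. A direct double-coset computation identifies the image of $N_{\baG/\baU}(\bar\sigma)$ under the natural coinvariant-to-abelianization map $(N^{\ab,\el})_\baU\to U^{\ab,\el}$ with the group-theoretic transfer $\tr^G_U(\sigma(1))$, where $U=\pi^{-1}(\baU)\supseteq N$ is the preimage of $\baU$ in $G$. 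The $\F_p$-direction hypothesis $j_\El(\sigma(1))\neq 0$ asserts precisely that $\tr^G_U(\sigma(1))\neq 0$ in $U^{\ab,\el}$ for every open $U\leq G$, which forces $\cores_{\baG,\baU}$ to be injective. Hence $\pdim_{\F_p\dbl\baG\dbr}(N^{\ab,\el})=0$, and since a cyclic projective module over the local completed group algebra $\F_p\dbl\baG\dbr$ is free of rank one, we conclude $N^{\ab,\el}\simeq\F_p\dbl\baG\dbr$.

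\textit{Step 2 (parts a and c).} For~(a) I would apply Corollary~\ref{cor:thdim} to the pro-$p$ group $N$, verifying that the transfer $i^{\El(N)}_{N,V}\colon N^{\ab,\el}\to V^{\ab,\el}$ is injective for every $V$ in the basis $\{W\cap N\mid W\leq_\circ G\text{ normal}\}$ of open neighbourhoods of $1\in N$. Using~(b) and repeating the double-coset analysis of Step~1, now applied to the conjugates of $\sigma(1)$ by representatives of $\baG/\pi(W)$ which together generate $N^{\ab,\el}$ as an $\F_p\dbl\baG\dbr$-module, the $\F_p$-direction hypothesis again furnishes the required transfer injectivity, so $N$ is a free pro-$p$ group. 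For~(c), in the countably based case the isomorphism $N^{\ab,\el}\simeq\F_p\dbl\baG\dbr$ supplies a decreasing countable basis $\{K_i\}_{i\geq 1}$ of open $\F_p\dbl\baG\dbr$-submodules with each quotient $N^{\ab,\el}/K_i$ a finite permutation module. Proposition~\ref{prop:splitex} then reduces the splitting of~\eqref{eq:spec2} to the splitting of each induced finite-level quotient extension, which can be constructed explicitly from the section $\sigma$ together with the freeness of $N$ established in~(a); the compatible sections assemble, by Proposition~\ref{prop:splitex}, into the desired splitting of the whole extension.

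\textit{Main obstacle.} The crux of the proof is the identification in Step~1 of the norm $N_{\baG/\baU}(\bar\sigma)\in(N^{\ab,\el})_\baU$ with the group-theoretic transfer $\tr^G_U(\sigma(1))\in U^{\ab,\el}$ for $U=\pi^{-1}(\baU)$; this is exactly what converts the ``pointwise'' $\F_p$-direction hypothesis into the homological-algebraic injectivity criterion demanded by Theorem~\ref{thm:dim}, and it is the point at which the Mackey-functor formalism developed in Section~\ref{s:mackey} becomes essential to the argument.
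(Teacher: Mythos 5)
Your Step 1 is essentially correct and is a genuinely different route to part (b) from the paper's: the paper first proves (a) by embedding $\idn_{\Sigma^\sharp}^{G^\sharp}(\boT(\Sigma,\F_p))$ into $\El(G)$ via Lemma~\ref{lem:injmod} and only afterwards reads off (b) from the image functor $\boZ$, whereas you obtain (b) directly from Theorem~\ref{thm:dim} with $d=0$. Your key identification is valid: for $U=\pi^{-1}(\baU)\triangleleft G$ the Mackey formula (cMF$_6$) applied to $\Sigma\subseteq N\subseteq U$ shows that the composite $(N^{\ab,\el})_{\baG}\to(N^{\ab,\el})_{\baU}\to U^{\ab,\el}$ sends the class of $\bar\sigma$ to $\tr_{G,U}(\sigma(1)\Phi(G))$, and since $H_0(\baG,N^{\ab,\el})$ is one-dimensional by Nakayama, nonvanishing of that transfer does force injectivity of $\cores_{\baG,\baU}$.

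The gaps are in Step 2, and the one in (a) is fatal as written. Corollary~\ref{cor:thdim} demands that $i^{\El(N)}_{N,V}\colon N^{\ab,\el}\to V^{\ab,\el}$ be injective on \emph{all} of $N^{\ab,\el}\simeq\F_p\dbl\baG\dbr$, which is infinite-dimensional when $\baG$ is infinite; your argument only controls the images of the topological generators ${}^g\sigma(1)\Phi(N)$, and nonvanishing of a linear map on a generating set says nothing about its kernel -- a nonzero closed $\F_p\dbl\baG\dbr$-submodule of $\F_p\dbl\baG\dbr$ (e.g.\ the augmentation ideal) contains no element of the form $g\cdot\bar\sigma$. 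Moreover the hypothesis controls transfers to open subgroups $U$ of $G$, while what is needed are transfers inside $N$, which has infinite index in $G$. This is exactly the difficulty the paper's proof is built around: it argues by contradiction, pushes a putative element $n\Phi(N)$ killed by some $i^{\El(N)}_{N,W_\circ}$ into $(NU)^{\ab,\el}$ for a suitable $U$ (using $\Phi(N)=N\cap\bigcap_{V\supseteq N}\Phi(V)$ to guarantee it survives there), observes that it lands in the finite-rank subfunctor $\boZ_{NU}$ on which $i^{\El}_{NU,U}$ is injective -- injectivity that rests on the socle argument of Lemma~\ref{lem:injmod} and Fact~\ref{fact:soctriv} over the finite $p$-groups $G/V$ -- and then contradicts $i^{\El(N)}_{N,N\cap U}(n\Phi(N))=0$ via the Mackey square \eqref{eq:fr2b}. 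None of this is recoverable by ``repeating the double-coset analysis.'' Part (c) has a similar gap: the finite-level splittings fed into Proposition~\ref{prop:splitex} cannot be built from $\sigma$, since $\Sigma\subseteq N$ maps trivially to $\baG$, and freeness of $N$ does not by itself split anything; the paper instead splits off $\boY\simeq\boh_0(\F_p\dbl\baG\dbr)$ from $\rst^{G^\sharp}_{\baG^\sharp}(\El(G))$ using injectivity of $\boh^0(\coind_{\triv}^{\baG}(\F_p))$ as a cohomological Mackey functor, and kills each class $[\bos_{\baU}]$ because it is inflated from a finite quotient where $\F_p[\baG/\baU]$ is self-injective.
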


\begin{proof} (a) Let $\boT=\boT(\Sigma,\F_p)$ (cf. Ex.~\ref{ex:1}(a)), and let
$\phi\colon\boT\to\El(\Sigma)=\prorst^{G^\sharp}_{\Sigma^\sharp}(\El(G))$
(cf. Fact~\ref{fact:tres})
be the isomorphism of cohomological $\Sigma^\sharp$-Mackey functors induced by $s=\sigma(1_{\Z_p})$, i.e.,
for $\Xi\in\Sigma^\sharp$, $|\Sigma:\Xi|=p^h$, one has
$\phi_\Xi(1_{\F_p})=s^{p^h}\Phi(\Xi)$.

Let $\tphi\colon\idn_{\Sigma^\sharp}^{G^\sharp}(\boT)\to\El(G)$
be the mapping induced by $\phi$ (cf. \eqref{eq:rightad}), i.e., with the
notation established in Example~\ref{ex:ind}(a) one has for $U\in G^\sharp$, 
$G=\bigsqcup_{r\in\caR} \Sigma r U$, $\caR\subseteq G$ a system of coset
representatives for $\Sigma\backslash G/U$
and $|\Sigma\colon \Sigma\cap rUr^{-1}|=p^{h^r}$ that
\begin{equation}
\label{eq:thmFptphi}
\tphi_U(\Sigma r U)=(r^{-1}s^{p^{h_r}} r)\Phi(U).
\end{equation}
As $\idn_{\Sigma^\sharp}^{G^\sharp}(\boT)\simeq\boh^0(\coind_\Sigma^G(\F_p))$
(cf. \eqref{eq:indTS}), $\idn_{\Sigma^\sharp}^{G^\sharp}(\boT)$ is of type $H^0$.
Moreover, $j_{\El}(\tphi_G(\Sigma 1G))=j_{\El}(s\Phi(G))\not=0$, i.e.,
$j_{\El}\circ\tphi_G$ is injective. Thus by Lemma~\ref{lem:injmod}, $\tphi$ is injective.
Put $\boZ=\image(\tphi)\subseteq \El(G)$.
Then $\boZ$ is an $i$-injective cohomological $G^\sharp$-Mackey subfunctor of $\El(G)$
(cf. \S\ref{ss:cohmacprof1}).

Since $G$ is a pro-$p$ group,
$\caM=\{\,V\in N^\sharp\mid V\ \text{normal in}\ G\,\}$
is a normal $N$-Mackey basis (cf. \S\ref{ss:macksys}).
Suppose that $N$ is not free, i.e., $\ccd_p(N)>1$.
Thus by Corollary \ref{cor:thdim}(iii),
there exists an element $n\in N\setminus\Phi(N)$, and an open subgroup
$W_\circ\in\caM$ such that $i_{N,W_\circ}^{\El}(n\Phi(N))=0$.
Hence $i_{N,W}^{\El}(n\Phi(N))=0$ for all $W\in W_\circ^\sharp$.
Since $n\not\in\Phi(N)$, there exists an open subgroup $V_\circ$ of $G$
containing $N$ such that $n\not\in\Phi(V_\circ)$ (cf. \cite[Prop.~2.8.9]{ribzal:prof}).
Hence $n\not\in\Phi(V)$ for all open subgroups $V$ of $V_\circ$ containing $N$.

Let $U$ be an open normal subgroup of $G$, such that $U\cap N\subseteq W_\circ$
and $U\subseteq V_\circ$. In particular, $NU\subseteq V_\circ$.
As $NU/U$ and $N/(N\cap U)$ are canonically isomorphic, one has a commutative diagram
\begin{equation}
\label{eq:fr2b}
\xymatrix{
N/\Phi(N)\ar[r]^-{\alpha}\ar[d]_{i^{\El(N)}_{N,N\cap U}}\ar@{..>}[1,1]^{\gamma}& 
NU/\Phi(NU)\ar[d]^{i^{\El(G)}_{NU,U}}\\
(N\cap U)/\Phi(N\cap U)\ar[r]^-{\beta}&U/\Phi(U),
}
\end{equation}
where $\alpha$ and $\beta$ are the canonical maps. Let $\gamma$ denote 
the diagonal map. 
Since $NU\subseteq V_\circ$, $\alpha(n.\Phi(N))\not=0$. 
As $N$ is generated as normal subgroup by $\Sigma$, one has 
$\image(\alpha)=\boZ_{NU}$. As
$i^{\El(G)}_{NU,U}\vert_{\boZ_{NU}}=i^\boZ_{NU,U}$ is injective,
one has $\gamma(n\Phi(N))\not=0$.
Since $N\cap U$ is an open subgroup of $W_\circ$,
one has $i^{\El(N)}_{N,N\cap U}(n\Phi(N))=0$; in particular, 
$\gamma(n\Phi(N))=0$, a contradiction. Thus $N$ is a free pro-$p$ group.

\noindent
(b) As $N$ is generated as normal subgroup by $\Sigma$, one has
an isomorphism of profinite left $\F_p\dbl\baG\dbr$-modules
\begin{equation}
\label{eq:thmFpmod}
N/\Phi(N)=\textstyle{\varprojlim_{N\subseteq U\subseteq_\circ G} \boZ_U}.
\end{equation}
Let $\boY=\rst^{G^\sharp}_{\baG^\sharp}(\boZ)$.
Then $\boY\simeq\boh_0(\F_p\dbl \baG\dbr)$ (cf. Rem.~\ref{rem:inddefl}).
In particular, one has an isomorphism
$\textstyle{\varprojlim_{N\subseteq U\subseteq_\circ G} \boZ_U}
=
\prorst_{\baG^\sharp}(\boY)\simeq\F_p\dbl \baG\dbr$.
(cf. Rem.~\ref{rem:hdown}).
 
\noindent
(c) Since $\FpbG$ is a profinite, projective left $\FpbG$-module,
$\boY=\rst^{G^\sharp}_{\baG^\sharp}(\boZ)$ is also of type $H^0$
(cf. Fact~\ref{fact:projct}), i.e., $\boY\simeq\boh^0(\coind_{\triv}^{\baG^\sharp}(\F_p))$.
The functor
\begin{equation}
\label{eq:ireslad}
\inrst_{\baG^\sharp}\colon \MC_{\baG}(\baG^\sharp,\pdis)\longrightarrow
{}_{\FpbG}\dis
\end{equation}
is the left-adjoint of the functor
\begin{equation}
\label{eq:h0rad}
\boh^0(\argu)\colon{}_{\FpbG}\dis\longrightarrow\MC_{\baG}(\baG^\sharp,\pdis).
\end{equation}
As $\inrst_{\baG^\sharp}$ is exact, $\boh^0(\argu)$ is mapping
injective, discrete left $\FpbG$-modules to injectives functors
in $\MC_{\baG}(\baG^\sharp,\pdis)$ (cf. \cite[Prop.~2.3.10]{weib:hom}). In particular, $\boY$ is injective
in $\MC_{\baG}(\baG^\sharp,\pdis)$.
Since $\boA=\rst^{G^\sharp}_{\baG^\sharp}(\El(G))\in \ob(\MC_{\baG}(\baG^\sharp,\pdis))$,
there exists a cohomological $\baG^\sharp$-subfunctor $\boB$ of $\boA$
such that $\boA=\boB\oplus\boY$. 

Let $\bcM$ be a normal $\baG$-Mackey basis, and let
$\pi\colon G\to\baG$ denote the canonical projection. 
For $\baU\in\bcM$ let $U=\{\,u\in G\mid\pi(u)\in \baU\,\}$, and put $\tG_{\baU}=G/\Phi(U)$.
Then one has canonical projections $\tpi_{\baU}\colon \tG_{\baU}\to G/U$ for all
$\baU\in\bcM$. Moreover, for $\baV\in\bcM$ with
$\baV\subseteq\baU$ one has a commutative diagram
\begin{equation}
\label{eq:frattsplit1}
\xymatrix{
\triv\ar[r]&\boA_{\baV}\ar[d]_{t^\boA_{\baV,\baU}}\ar[r]^{\tiota_{\baV}}&\tG_{\baV}\ar[r]^{\tpi_{\baV}}\ar[d]_{\tmu_{\baV,\baU}}
&G/V\ar[d]^{\nu_{V,U}}\ar[r]&\triv\\
\triv\ar[r]&\boA_{\baU}\ar[r]^{\tiota_{\baU}}&\tG_{\baU}\ar[r]^{\tpi_{\baU}}&G/U\ar[r]&\triv
}
\end{equation}
with exact rows, where
$\tmu_{\baV,\baU}\colon G_{\baV}\to G_{\baU}$ and $\nu_{V,U}\colon G/V\to G/U$
are the canonical maps. We also assume that $\tiota_{\baU}$ is given by inclusion.
Put $G_{\baU}=\tG_{\baU}/\boB_{\baU}$, and let $\pi_{\baU}\colon\colon G_{\baU}\to G/U$
denote the canonical projection. Then one has a commutative diagram
\begin{equation}
\label{eq:frattsplit2}
\xymatrix{
\bt_{\baV}\colon&\triv\ar[r]&\boY_{\baV}\ar[d]_{t^\boY_{\baV,\baU}}\ar[r]&G_{\baV}\ar[r]^{\pi_{\baV}}\ar[d]_{\mu_{\baV,\baU}}
&G/V\ar[d]^{\nu_{V,U}}\ar[r]&\triv\\
\bt_{\baU}\colon&\triv\ar[r]&\boY_{\baU}\ar[r]&G_{\baU}\ar[r]^{\pi_{\baU}}&G/U\ar[r]&\triv
}
\end{equation}
with exact rows, where $\mu_{\baV,\baU}\colon G_{\baV}\to G_{\baU}$ is the induced map.
Let $\rho_{\baU}\colon G\to G_{\baU}$ denote the canonical projection, and put
$K_{\baU}=\kernel(\rho_{\baU})$. By construction, one has $\Phi(U)\subseteq K_{\baU}\subseteq U$.
In particular, for $K=\bigcap_{\baU\in\bcM} K_{\baU}$, one has
\begin{equation}
\label{eq:frattsplit3}
\Phi(N)\subseteq K\subseteq N
\end{equation}
(cf. \cite[Prop.~2.8.9]{ribzal:prof}). By construction, one has $N/K\simeq\FpG$, and - thus by (b) -
 $K=\Phi(N)$. Let
 \begin{equation}
 \label{eq:frattsplit4}
 \xymatrix{
 \bos\colon\qquad
 \triv\ar[r]& N/\Phi(N)\ar[r]& G/\Phi(N)\ar[r]& \baG\ar[r]&\triv
 }
 \end{equation}
 denote the short exact sequence associated to the extension $G/\Phi(N)\to\baG$,
 and let $[\bos_{\baU}]$ denote the image of $[s]$ in $H^2_{\cts}(\baG,\F_p[\baG/\baU])$.
 Then - by construction - $[\bos_{\baU}]=\ifl_{G/U}^{\baG}([\bt_{\baU}])$, where
 $\ifl_{G/U}^{\baG}(\argu)$ is the inflation map
 \begin{equation}
 \label{eq:frattsplit5}
 \ifl_{G/U}^{\baG}(\argu)\colon H^2(G/U,\F_p[\baG/\baU])\longrightarrow H^2_{\cts}(\baG,\F_p[\baG/\baU]),
 \end{equation}
 and $\bt_{\baU}$ is given as in \eqref{eq:frattsplit2}.
 As $\F_p[\baG/\baU]$ is an injective $\F_p[\baG/\baU]$-module, one has $[\bt_{\baU}]=0$,
 and therefore $[\bos_{\baU}]=0$. Hence, by Proposition~\ref{prop:splitex}, one has $[\bos]=0$,
 and this yields the claim.
\end{proof}


\subsection{Special extensions of pro-$p$ groups}
\label{ss:spec}
The following theorem shows that the splitting of
a special pro-$p$ extension implies the decomposition of 
the extension group $G$ as a free product.

\begin{thm}
\label{thm:comfree}
Let $\xymatrix{
\triv\ar[r]&N\ar[r]^{\iota}&G\ar[r]_{\pi}\ar[r]&\baG\ar[r]\ar@/_/[l]_\theta&\triv}$
be a split special pro-$p$ extension, i.e., $\pi\circ \theta=\iid_{\baG}$.
Then there exists a closed free pro-$p$ subgroup $F\subseteq N$ such that
$G$ is isomorphic to $F\coprod \baG$.
\end{thm}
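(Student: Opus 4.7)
The plan is to construct a continuous homomorphism $\Psi\colon F \coprod \baG \to G$ from a pro-$p$ free product and show it is an isomorphism. Since $\baG$ is pro-$p$ (as a quotient of $G$), every projective profinite $\FpbG$-module is free, so the hypothesis on $N^{\ab,\el}$ supplies an isomorphism $N^{\ab,\el} \simeq \FpbG\langle X\rangle$ of profinite $\FpbG$-modules for some profinite pointed set $X$. I would lift $X$ continuously to $\tilde X \subseteq N$ (using that $N\to N^{\ab,\el}$ admits a continuous set-theoretic section, cf.~Prop.~I.1.2 of \cite{ser:gal}), take $F$ to be the free pro-$p$ group on the pointed profinite set $X$, and combine the induced map $F \to N\subseteq G$ with the splitting $\theta\colon \baG \to G$ via the universal property of the pro-$p$ free product to obtain $\Psi$. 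By construction $\Psi$ commutes with the projections onto $\baG$.

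Set $K = \kernel(F \coprod \baG \to \baG)$. The commutative diagram
\begin{equation*}
\xymatrix{
\triv \ar[r] & K \ar[r] \ar[d]^{\Psi|_K} & F \coprod \baG \ar[r] \ar[d]^\Psi & \baG \ar[r] \ar@{=}[d] & \triv\\
\triv \ar[r] & N \ar[r]^{\iota} & G \ar[r]^\pi & \baG \ar[r] & \triv
}
\end{equation*}
reduces the problem (by the five-lemma) to showing that $\Psi|_K\colon K \to N$ is an isomorphism. For this I would invoke the pro-$p$ analogue of the Kurosh subgroup theorem (cf.~\cite{ribzal:prof}) to identify $K$ with the pro-$p$ free product of the $\baG$-conjugates $g F g^{-1}$ of $F$ inside $F \coprod \baG$; in particular $K$ is itself free pro-$p$, and $K^{\ab,\el}$ is the free $\FpbG$-module on $X$, the $\FpbG$-action coming from conjugation by $\baG$. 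The map $\Psi|_K^{\ab,\el}$ then carries this basis $X$ of $K^{\ab,\el}$ onto the chosen basis $X$ of $N^{\ab,\el}$, hence is an isomorphism of $\FpbG$-modules, and $\Psi|_K$ is surjective by Burnside's basis theorem.

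To finish, it suffices to produce a section $\tau\colon N \to K$ of $\Psi|_K$ that is itself surjective, since then $\tau$ is bijective and $\Psi|_K = \tau^{-1}$ is as well. The free pro-$p$ group $N$ admits a basis given by a continuous lift of the $\F_p$-basis $\baG\cdot X$ of $N^{\ab,\el}$, and I would define $\tau$ on such a basis by $\theta(g)\tilde x\theta(g)^{-1} \mapsto g x g^{-1}$ for $g \in \baG$ and $x \in X$ (viewing $g$ and $x$ as elements of $F \coprod \baG$). The universal property of $N$ extends this uniquely to a continuous homomorphism, and the identity $\Psi|_K \circ \tau = \iid_N$ is immediate on generators. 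Surjectivity of $\tau$ follows since $\tau(N)$ contains all $\baG$-conjugates $g x g^{-1}$ of generators of $F$, which close up to all of $K$ by the Kurosh description recalled above. The main obstacle is marshalling the pro-$p$ free-product machinery---specifically the identification of $K$ as a free pro-$p$ group together with the computation $K^{\ab,\el} \simeq \FpbG\langle X\rangle$---after which the verification that $\Psi$ is an isomorphism is a routine Frattini/five-lemma assembly.
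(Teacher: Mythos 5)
Your proposal is correct, and its skeleton coincides with the paper's: extract a profinite subset $\euX\subseteq N$ lifting an $\FpbG$-basis of $N^{\ab,\el}$, form the map $F\coprod\baG\to G$ from the universal property using $\theta$, and compare the two kernels over $\baG$ by a Frattini-quotient argument. The difference is in the final step. The paper notes that the kernel $M$ upstairs and $N$ are both free pro-$p$ groups and that $\alpha\colon M\to N$ induces an isomorphism $M/\Phi(M)\to N/\Phi(N)$, then invokes the fact that a homomorphism of free pro-$p$ groups inducing an isomorphism on Frattini quotients is an isomorphism; you instead construct an explicit continuous section $\tau\colon N\to K$ on the basis $\{\theta(g)\tilde{x}\theta(g)^{-1}\}$ of $N$ and observe that $\tau$ is surjective because its image contains the conjugates $gxg^{-1}$, which topologically generate $K$ (indeed $K=\cl(\langle{}^{\theta(g)}F\mid g\in\baG\rangle)$, since that subgroup is normalized by $F$ and by $\theta(\baG)$, hence by all of $F\coprod\baG$). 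Your section trick has the pleasant side effect that the pro-$p$ Kurosh decomposition of $K$ --- which you single out as the main obstacle --- is not actually needed: once $\Psi|_K\circ\tau=\iid_N$ holds and $\tau$ is onto, both maps are bijective, so you never need to know in advance that $K$ is free or compute $K^{\ab,\el}$. Conversely, the paper's shorter ending silently leans on the freeness of the kernel $M$ (itself a Kurosh-type fact), so your version is arguably the more self-contained, at the cost of checking that $\{\theta(g)\tilde{x}\theta(g)^{-1}\}$ really is a basis of $N$ (it is a compact subset of the free group $N$ mapping homeomorphically onto a basis of $N/\Phi(N)$, and the parametrization by $\baG\times X$ is injective because $N^{\ab,\el}$ is a free $\FpbG$-module on $X$).
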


\begin{proof} As $\baG$ is a pro-$p$ group, every projective profinite left $\FpbG$-module is free, 
i.e., there exists a profinite set $\euY$ and an isomorphism
of left $\FpbG$-modules
\begin{equation}
\label{eq:euX}
\psi\colon \FpbG\hotimes \F_p\dbl\euY\dbr\longrightarrow N/\Phi(N).
\end{equation}
Let $s\colon N/\Phi(N)\to N$ be a continuous section of the canonical projection
(cf. \cite[Prop.~I.1.2]{ser:gal}). Then $\euX=s(\psi(1\hotimes\euY))\subset N$
is a closed subset and thus profinite. Let $F(\euX)$ be the free pro-$p$ group
defined over the profinite set $\euX$. Then one has a canonical homomorphism of pro-$p$
groups $\phi\colon F(\euX)\to N$. Put $F=\image(\phi)$. Then - as $F$ is a closed subgroup
of $N$ - $F$ is free.
By construction, the induced
morphism of profinite $\F_p$-vector spaces 
\begin{equation}
\label{eq:euXvec}
\tphi\colon F(\euX)/\Phi(F(\euX))\longrightarrow F/\Phi(F)
\end{equation}
is an isomorphism. Thus $\phi_\circ\colon F(\euX)\to F$ is an isomorphism, 
and $F$ is free over the profinite set $\phi_\circ(\euX)$. Let $\iota_\circ\colon F\to G$
be the mapping induced by inclusion.
Then one has a homomorphism of pro-$p$ groups
$\beta=\iota_\circ\coprod\theta\colon F\coprod \baG\longrightarrow G$.
Put $M=\cl(\langle {}^gF\mid g\in G\rangle)$ and consider the commutative diagram with exact rows
\begin{equation}
\label{eq:pfreepdia}
\xymatrix{
\triv\ar[r]&M\ar[d]^{\alpha}\ar[r]&F\coprod \baG\ar[r] \ar[d]^\beta&\baG\ar[r]\ar@{=}[d]&\triv\\
\triv\ar[r]&N\ar[r]&G\ar[r]&\baG\ar[r]&\triv.
}
\end{equation}
By construction, the induced map $\alpha_\circ\colon M/\Phi(M)\to N/\Phi(N)$ is an isomorphism
of profinite $\F_p$-vector spaces. As $M$ and $N$ are free, $\alpha$ is an isomorphism.
From the snake lemma one concludes that $\beta$ is an isomorphism.
\end{proof}


\subsection{The number of $\F_p$-ends}
\label{ss:Fpends}
Let $G$ be a profinite group, and let $H\subseteq G$ be a closed subgroup of $G$.
Then one can define an {\it index} $|G:H|$ of $H$ in $G$ 
which is a {\it supernatural number} (cf. \cite[Prop.~2.3.2(c)]{ribzal:prof}).
The fixed points of the transitive permutation modules $\F_p\dbl G/H\dbr$
and $\Z_p\dbl G\dbr$ are related to this index as follows.

\begin{prop}
\label{prop:fixperm}
Let $G$ be a profinite group, and let $H\subseteq G$ be a closed subgroup of $G$. Then
\begin{equation}
\label{eq:fixperm}
\Hom_G(\F_p,\F_p\dbl G/H\dbr)\simeq
\begin{cases}
\F_p&\qquad\text{if $p^\infty$ does not divide $|G:H|$,}\\
\hfil 0\hfil&\qquad\text{if $p^\infty$ divides $|G:H|$,}
\end{cases}
\end{equation}
and
\begin{equation}
\label{eq:fixperm2}
\Hom_G(\Z_p,\Z_p\dbl G/H\dbr)\simeq
\begin{cases}
\Z_p&\qquad\text{if $p^\infty$ does not divide $|G:H|$,}\\
\hfil 0\hfil&\qquad\text{if $p^\infty$ divides $|G:H|$.}
\end{cases}
\end{equation}
\end{prop}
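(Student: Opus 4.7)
The plan is to exhibit $\F_p\dbl G/H\dbr$ (respectively $\Z_p\dbl G/H\dbr$) as an inverse limit of finite transitive permutation modules and use that $\Hom_G(\F_p,\argu)$ preserves inverse limits. Since $H$ is closed, $G/H=\varprojlim_{U\triangleleft_\circ G} G/HU$ as profinite $G$-sets, so
\begin{equation*}
\F_p\dbl G/H\dbr\simeq\textstyle{\varprojlim_{U\triangleleft_\circ G}}\,\F_p[G/HU]
\quad\text{and}\quad
\Hom_G(\F_p,\F_p\dbl G/H\dbr)\simeq\textstyle{\varprojlim_U}\,\Hom_G(\F_p,\F_p[G/HU]).
\end{equation*}
For each open normal $U$, the $G$-set $G/HU$ is finite and transitive, so $\Hom_G(\F_p,\F_p[G/HU])=(\F_p[G/HU])^G=\F_p\cdot N_{G/HU}$, where $N_{G/HU}=\sum_{xHU\in G/HU} xHU$ is the orbit sum. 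This reduces the problem to analyzing the transition maps of the resulting inverse system of copies of $\F_p$.

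For $V\subseteq U$ open normal in $G$, the transition map is the $\F_p$-linearization of the projection $G/HV\twoheadrightarrow G/HU$, under which $N_{G/HV}\mapsto [HU:HV]\cdot N_{G/HU}$. Identifying each $(\F_p[G/HU])^G$ with $\F_p$ via $N_{G/HU}$, the transition $\F_p\to\F_p$ becomes multiplication by $[HU:HV]\bmod p$. The two cases then fall out from the behaviour of the sequence $\{v_p([G:HU])\}_U$, whose supremum is $v_p(|G:H|)$. If $p^\infty\nmid |G:H|$, pick $U_0$ achieving the maximum of $v_p([G:HU])$; then for every $V\subseteq U_0$ open normal one has $v_p([HU_0:HV])=v_p([G:HV])-v_p([G:HU_0])=0$, so the transition maps are isomorphisms on the cofinal subsystem $\{V\subseteq U_0\}$, and the inverse limit is $\F_p$. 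If $p^\infty\mid |G:H|$, then for any $U$ one can find $V\subseteq U$ open normal with $v_p([HU:HV])\geq 1$, making the transition map zero cofinally; hence a compatible system $(a_U)$ must satisfy $a_U=0$ for every $U$, and the inverse limit is $0$.

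The $\Z_p$-case is handled along the same lines: $\Hom_G(\Z_p,\Z_p[G/HU])=(\Z_p[G/HU])^G\simeq \Z_p$ generated by $N_{G/HU}$, and the transition maps are now multiplication by $[HU:HV]\in\Z_p$. In the first case the argument above still yields $\Z_p$ via stabilization below $U_0$; in the second case, a nonzero element of the limit with $v_p(a_{U_0})=m$ would force $v_p([HU_0:HV])\leq m$ for every $V\subseteq U_0$, contradicting the assumed unboundedness. The only real bookkeeping to keep straight is the direction of the inverse system and the image of $N_{G/HV}$ under the transition; once this is set up, neither direction presents an obstacle.
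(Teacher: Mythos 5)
Your argument is correct and is essentially the paper's own proof: the paper likewise identifies $\Hom_G(\Z_p,\Z_p\dbl G/H\dbr)$ with the inverse limit of the fixed-point modules $\Z_p[G/HU]^G\simeq\Z_p$ over open normal $U$, with transition maps given by multiplication by $|HU:HV|$, and reads off the two cases from the $p$-divisibility of $|G:H|$. Your write-up merely makes explicit the case analysis that the paper leaves to the reader.
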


\begin{proof} Let $U$ and $V$ be open normal subgroups in $G$, $V\subseteq U$.
Then one has a commutative diagram
\begin{equation}
\label{eq:fixperm3}
\xymatrix{
\Z_p\ar[d]\ar[r]^{\beta}&\Z_p\ar[d]\\
\Z_p[G/HV]^G\ar[r]^{\alpha_{V,U}}&\Z_p[G/HU]^G\\
}
\end{equation} 
where the vertical maps are isomorphisms, 
$\alpha_{V,U}$ is the canonical map, and
$\beta$ is given by multiplication with $|HU/HV|$.
This shows \eqref{eq:fixperm2}.
The isomorphism \eqref{eq:fixperm} follows by a similar argument.
\end{proof}

For a pro-$p$ group $G$, A.A.~Korenev defined in \cite{kor:ends} the number of
$\F_p$-ends by
\begin{equation}
\label{eq:numFpends}
\EE(G)=1-\dim_{\F_p}(H^0_{\cts}(G,\FpG))+\dim_{\F_p}(H^1_{\cts}(G,\FpG)).
\end{equation}
If $G$ is countably based, one has an isomorphism
\begin{equation}
\label{eq:Fpendgr}
D_1(\F_p)^\vee\simeq\textstyle{\varprojlim_{U\subseteq_\circ G} H^1(U,\F_p)}\simeq
H^1_{\cts}(G,\FpG).
\end{equation}
Hence, by Proposition~\ref{prop:fixperm}, for an infinite finitely generated pro-$p$ group $G$ one has
$\EE(G)=1+\dim(D_1(\F_p))$.
Let $G$ be a finitely generated pro-$p$ group. Then $G$ is called 
{\it FAb}\footnote{This is an abbreviation 
for {\it finite abelianizations}.}
if $U^{\ab}=U/\cl([U,U])$
is finite for every open subgroup $U$ of $G$.
The following theorem has been proved in \cite{wei:fab}.

\begin{thm}
\label{thm:sel}
Let $G$ be a FAb pro-$p$ group. Then $H^1_{\cts}(G,\FpG)=0$.
In parti\-cular, if $G$ is infinite, then $\EE(G)=1$.
\end{thm}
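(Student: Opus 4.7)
The plan is to dualize via Tate's group $D_1(\F_p)$ and reduce the vanishing $H^1_{\cts}(G,\FpG)=0$ to an element-wise statement about transfers. Since the application to $\EE$ requires $G$ finitely generated, and such $G$ is countably based, we may invoke \eqref{eq:Fpendgr} to identify $H^1_{\cts}(G,\FpG)\simeq D_1(\F_p)^\vee$. Thus it suffices to prove
\[
D_1(\F_p)=\varinjlim_{U\subseteq_\circ G}U^{\ab,\el}=0.
\]
Under the FAb hypothesis, each $U^{\ab,\el}$ is a finite-dimensional $\F_p$-vector space, so this colimit vanishes if and only if every element dies at some later stage, that is: for each open $U$ and each $u\in U^{\ab,\el}$ there exists an open $V\subseteq U$ with $\tr_{U,V}(u)\in\Phi(V)$.

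To establish this element-wise claim, I would choose $V$ open and normal in $U$ satisfying simultaneously:
(i) $V\supseteq U^{p^{N-1}}$ for a large $p$-power $p^N$, so that $U^{p^N}\subseteq V^p$; and
(ii) $U/V$ acts trivially on $V^{\ab,\el}$, i.e.\ $[U,V]\subseteq\Phi(V)$.
Writing $u$ as acting on $U/V$ with orbits of common length $p^j$ for some $j\leq N$, the transfer formula
\[
\tr_{U,V}(u)=\sum_{\text{$u$-orbits in $U/V$}}g_i^{-1}u^{p^j}g_i \quad \text{in } V^{\ab}
\]
reduces, modulo $\Phi(V)$ and by (ii), to $p^{N-j}\cdot[u^{p^j}]$ in $V^{\ab,\el}$: if $j<N$ the coefficient $p^{N-j}$ annihilates the $\F_p$-vector space class, while if $j=N$ the single remaining term is $[u^{p^N}]=0$ by (i). The case $u\in V$ is analogous, yielding $p^N\cdot[u]=0$.

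The main obstacle is constructing, for arbitrary $U$, a single open $V$ satisfying both (i) and (ii) simultaneously: the Frattini and Jennings--Zassenhaus series give (i) readily but not (ii), and taming the conjugation action of $U/V$ on $V^{\ab,\el}$ requires combining $p$-th powers with commutators in a careful way; this accounting is carried out in \cite{wei:fab}. Granting $H^1_{\cts}(G,\FpG)=0$, the ``in particular'' conclusion is immediate: when $G$ is infinite, $|G:\triv|=|G|=p^\infty$, so Proposition~\ref{prop:fixperm} gives $H^0_{\cts}(G,\FpG)=\FpG^G=0$, and combined with $H^1_{\cts}(G,\FpG)=0$ the definition \eqref{eq:numFpends} yields $\EE(G)=1-0+0=1$.
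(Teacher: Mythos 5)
The paper offers no proof of this theorem at all: it is quoted from \cite{wei:fab} with the single sentence ``The following theorem has been proved in \cite{wei:fab}.'' So there is no argument in the paper to compare yours against, and deferring the hard part to that reference is, in itself, consistent with what the author does. Your reduction is also sound as far as it goes: for a FAb (hence finitely generated, hence countably based) pro-$p$ group, \eqref{eq:Fpendgr} identifies $H^1_{\cts}(G,\FpG)$ with $D_1(\F_p)^\vee$, so the vanishing is equivalent to every class in some $U^{\ab,\el}$ dying under transfer to a smaller open subgroup; and your treatment of the ``in particular'' clause via Proposition~\ref{prop:fixperm} and \eqref{eq:numFpends} is correct and complete.

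The genuine gap is that the step you defer is not a technical ``accounting'' issue but the entire content of the theorem, and your framing of it obscures where the FAb hypothesis must enter. If, for an arbitrary pro-$p$ group, one could always find an open normal $V\leq U$ satisfying your (i) and (ii), your transfer computation would prove $D_1(\F_p)=0$ unconditionally --- but Corollary~B shows $j_{\El}\colon G^{\ab,\el}\to D_1(\F_p)$ is \emph{injective} for a free pro-$p$ group, so $D_1(\F_p)\neq 0$ there. Hence such a $V$ cannot exist in general (already $[U,\Phi(U)]\not\subseteq\Phi(\Phi(U))$ for $U$ free of rank $2$), and the assertion that FAb guarantees its existence is precisely the theorem being proved, not a lemma one can wave at. There is also an internal inconsistency in the bookkeeping: under (i) the exponent of $U/V$ divides $p^{N-1}$, so the case $j=N$ with $p^N=|U/V|$ (a cyclic quotient generated by $uV$) is excluded rather than handled, while the case of a cyclic quotient of order $p^m$ with $m<N$ is exactly the one where neither ``$p$ annihilates'' nor ``(i) kills $u^{p^m}$'' applies. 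In short: citing \cite{wei:fab} wholesale, as the paper does, is fine; presenting (i)--(ii) as the skeleton of a proof is not, because the skeleton proves too much.
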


From Theorem~\ref{thm:sel} one concludes the following.

\begin{thm}
\label{thm:mend}
Let $G$ be a finitely generated pro-$p$ group with $\EE(G)>1$.
Then there exists an open normal subgroup $U$ of $G$ with an 
$\F_p$-direction $\facr{U}{\Z_p}{\tau}{\sigma}$.
\end{thm}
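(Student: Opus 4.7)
The plan is to convert $D_1(\F_p)\neq 0$ into an open normal subgroup $U\triangleleft G$ enjoying two independent properties---$U^{\ab}$ infinite and $j_{\El}|_{U^{\ab,\el}}\neq 0$---and then use a linear-algebra manipulation inside $U^{\ab,\el}$ to produce the desired $\tau$ and $\sigma$.

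Since $\EE(G)>1$ forces $G$ to be infinite, the formula $\EE(G)=1+\dim D_1(\F_p)$ recalled in \S\ref{ss:Fpends} gives $D_1(\F_p)\neq 0$. Theorem~\ref{thm:sel} then prevents $G$ from being FAb, so some open $V_0\subseteq G$ has $V_0^{\ab}$ infinite, providing a continuous surjection $V_0\twoheadrightarrow\Z_p$. Separately, writing $D_1(\F_p)=\varinjlim_W W^{\ab,\el}$, pick an open $W_0\subseteq G$ and $\xi\in W_0^{\ab,\el}$ with $j_{\El}(\xi)\neq 0$. The open (hence finite-index) normal subgroup $U:=\bigcap_{g\in G}{}^g(V_0\cap W_0)$ is my candidate: restricting the surjection on $V_0$ to $U$ has open image in $\Z_p$, so $U^{\ab}$ remains infinite, and $\eta:=\tr_{W_0,U}(\xi)\in U^{\ab,\el}$ represents the same non-zero class as $\xi$ in the direct limit, so $K_U:=\ker(j_{\El}|_{U^{\ab,\el}})$ is a proper subspace of $U^{\ab,\el}$.

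Next, let $T_U\subseteq U^{\ab,\el}$ denote the image of the torsion subgroup of $U^{\ab}$; this is a proper subspace since $U^{\ab}$ has positive $\Z_p$-rank. I would choose a non-zero $\F_p$-linear functional $\bar\tau\colon U^{\ab,\el}\to\F_p$ vanishing on $T_U$; it factors through the torsion-free quotient mod $p$, hence lifts to a continuous surjection $\tau\colon U\to\Z_p$. The elementary key observation is: whenever $\bar\tau\neq 0$ and $K\subsetneq U^{\ab,\el}$, the affine fibre $\bar\tau^{-1}(1)$ cannot lie inside $K$ (else $\ker\bar\tau\subseteq K$, and a codimension count forces $K=U^{\ab,\el}$). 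Applied to $K_U$, this supplies $\bar u\in\bar\tau^{-1}(1)\setminus K_U$. Any lift $s_0\in U$ of $\bar u$ satisfies $\tau(s_0)\in 1+p\Z_p$; since $\tau(\Phi(U))=p\Z_p$, one can correct by an element of $\Phi(U)$ to obtain $s\in U$ with $\tau(s)=1$ and $s\Phi(U)=\bar u$. Then $s$ has infinite order, so $\sigma(x):=s^x$ defines a continuous section of $\tau$ with $j_{\El}(\sigma(1))=j_{\El}(\bar u)\neq 0$, producing the sought $\F_p$-direction.

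The main obstacle is the first step: assembling in a single open normal $U$ both an infinite abelianization (coming from non-FAb via Theorem~\ref{thm:sel}) and an $\El$-class surviving in $D_1(\F_p)$ (coming from a different open subgroup a priori). The point which lets both propagate down to the core of $V_0\cap W_0$ is that the structural maps of the colimit defining $D_1(\F_p)$ are transfer maps, so non-vanishing is transported along $\tr_{W_0,U}$, while a $\Z_p$-quotient survives upon restriction to any open subgroup. The subsequent linear-algebra and $\Phi(U)$-correction steps are routine once $U$ is in place.
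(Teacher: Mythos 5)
Your argument is correct and follows essentially the same route as the paper: Theorem~\ref{thm:sel} together with the colimit description of $D_1(\F_p)$ yields an open normal subgroup $U$ with $U^{\ab}$ infinite and $j_{\El,U}\neq 0$, and then $\tau$ and $\sigma$ are manufactured from an element of $U^{\ab}$ that is non-trivial modulo $\tor(U^{\ab})+pU^{\ab}$ and whose class survives in $D_1(\F_p)$. The only difference is cosmetic: the paper splits the last step into two cases according to whether the surviving class lies in (the image of) a torsion-free complement $C$ of $\tor(U^{\ab})$, adjusting $C$ by a torsion element if not, whereas your observation that the affine fibre $\bar{\tau}^{-1}(1)$ cannot lie in the proper subspace $K_U$ treats both cases at once.
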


\begin{proof}
By Theorem~\ref{thm:sel}, there exists an open subgroup $U$ of $G$ such that
$U^{\ab}$ is an infinite, finitely generated $\Z_p$-module.
Since $V^{\ab}$ is infinite for any open subgroup $V$ of $U$, the open subgroup
$U$ can be chosen to be normal in $G$.
As $D_1(\F_p)=\varinjlim_{U\subseteq_\circ G} U^{\ab,\el}$,
we may also assume that the map $j_{\El,U}\colon U^{\ab,\el}\to D_1(\F_p)$
is non-trivial.
There exists a non-trivial torsion free $\Z_p$-submodule $C$ of $U^{\ab}$ satisfying
$U^{\ab}=\tor(U^{\ab})\oplus C$. Let $\eta\colon U^{\ab}\to U^{\ab,\el}$
denote the canonical map. We may consider two cases separately.

\noindent
{\bf Case 1:} $j_{\El,U}(\eta(C))\not=0$. In this case there exists $c\in C$ such that $j_{\El,U}(\eta(c))\not=0$.
As $c\not\in pC\subseteq\Phi(U^{\ab})$, $\Z_pc$ is a direct summand of $C$ isomorphic to $\Z_p$,
and thus also a direct summand of $U^{\ab}$. 
Let $\tci\in U$ be such that $c=\tci\cl([U,U])$, and
let $V$ be a $\Z_p$-submodule of $U^{\ab}$
such that $U^{\ab}=\Z_pc\oplus V$. 
Define the homomorphism $\ttau\colon U^{\ab}\to\Z_p$ by
$\ttau(c)=z$, $V=\kernel(\ttau)$, and let $\tau\colon U\to\Z_p$ be the induced map; 
define $\sigma\colon\Z_p\to U$ by $\sigma(z)=\tci$. It is straightforward to verify that
$\facr{U}{\Z_p}{\tau}{\sigma}$ is an $\F_p$-direction.

\noindent
{\bf Case 2:} $j_{\El,U}(\eta(C))=0$. By hypothesis, there exists an element
$x\in \tor(U^{\ab})$ satisfying $j_{\El,U}(\eta(x))\not=0$,
and $c\in C$, $c\not\in pC$. Let $C_\circ\subseteq C$ be such that $C=\Z_pc\oplus C_\circ$.
Then $y=x+c$ is an element
generating a submodule isomorphic to $\Z_p$, and $C^\prime=\Z_py\oplus C_\circ$
is a complement of $\tor(U^{\ab})$ in $U^{\ab}$.
By construction, $j_{\El,U}(\eta(C^\prime))\not=0$ and one can proceed as in Case 1.
\end{proof}

\begin{rem}
\label{rem:nFpends}
Let $G$ be a finitely generated, infinite pro-$p$ group.
Then either $\EE(G)=1$, or $G$ contains an open subgroup
$U$ with an $\F_p$-direction $\facr{U}{\Z_p}{\tau}{\sigma}$
(cf. Thm.~\ref{thm:mend}).
Put $\Sigma=\image(\sigma)$.
If $|U:\Sigma|<\infty$, then $G$ is virtually cyclic, and - as
$\dim(D_1(p))=1$ - one has $\EE(G)=2$.
Assume that $|U:\Sigma|=\infty$.
By the proof of Theorem~\ref{thm:Fpdir}(a),
$D_1(\F_p)$ contains the subgroup $\inrst_{U^\sharp}(\boZ)$ 
which is isomorphic to 
$\coind_\Sigma^U(\F_p)$. Hence $\dim(D_1(\F_p))=\infty$,
and therefore $\EE(G)=\infty$.
Hence Theorem~\ref{thm:Fpdir} and Theorem~\ref{thm:mend}
provide an alternative proof of the fact that the only possible 
values for $\EE(G)$ are $0$, $1$, $2$ and $\infty$ (cf. \cite{kor:ends}).
\end{rem}

Finally we obtain the following structural result for finitely
generated pro-$p$ groups with infinitely many ends
(cf. Thm.~\ref{thm:Fpdir}, Thm.~\ref{thm:mend}).

\begin{thm}
\label{thm:infends}
Let $G$ be a finitely generated pro-$p$ group satisfying $\EE(G)=\infty$.
Then $G$ contains an open normal subgroup $U$ and a closed subgroup $N$, $N\not=U$,
with the following properties:
\begin{itemize}
\item[(a)] $N$ is normal in $U$;
\item[(b)] $N$ is a non-trivial free pro-$p$ subgroup; 
\item[(c)] $N^{\ab,\el}\simeq\F_p\dbl U/N\dbr$ as profinite left $\F_p\dbl U/N\dbr$-module;
\item[(d)] the extension $\triv\to N^{\ab,\el}\to U/\Phi(N)\to U/N\to\triv$ splits.
\end{itemize}
\end{thm}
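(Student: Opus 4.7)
The strategy is to combine Theorem~\ref{thm:mend} and Theorem~\ref{thm:Fpdir} in sequence. Since $\EE(G)=\infty>1$, Theorem~\ref{thm:mend} furnishes an open normal subgroup $U$ of $G$ carrying an $\F_p$-direction $\facr{U}{\Z_p}{\tau}{\sigma}$. Because $U$ has finite index in the finitely generated pro-$p$ group $G$, $U$ is itself finitely generated (and in particular countably based), so Theorem~\ref{thm:Fpdir} applies to $U$ in place of $G$. Setting $\Sigma=\image(\sigma)$ and $N=\cl(\langle\,{}^u\Sigma\mid u\in U\,\rangle)$, we take this as our candidate for the subgroup $N$ in the statement.

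Most of the claimed conclusions are then immediate from Theorem~\ref{thm:Fpdir}. By construction $N$ is normal in $U$, giving (a); $N$ is a free pro-$p$ group by Theorem~\ref{thm:Fpdir}(a); the isomorphism $N^{\ab,\el}\simeq\F_p\dbl U/N\dbr$ of profinite left $\F_p\dbl U/N\dbr$-modules is Theorem~\ref{thm:Fpdir}(b), which is (c) here; and the splitting of the extension $\triv\to N^{\ab,\el}\to U/\Phi(N)\to U/N\to\triv$ is Theorem~\ref{thm:Fpdir}(c), i.e.\ (d). Non-triviality of $N$ is also clear, since $\sigma(1_{\Z_p})\in\Sigma\subseteq N$ is non-trivial because $\tau(\sigma(1_{\Z_p}))=1_{\Z_p}\neq 0$.

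The only real point that needs checking, and the one genuine obstacle, is the strict inclusion $N\neq U$. The plan is to argue by contradiction. If $N=U$, then property (c) forces $U^{\ab,\el}=N^{\ab,\el}\simeq\F_p\dbl U/N\dbr=\F_p$, so $U$ is a pro-cyclic pro-$p$ group, and hence $U\simeq\Z_p$. Since $U$ is open in $G$, this means $G$ is infinite and virtually cyclic; by Korenev's classification recalled in the introduction (cf.\ the discussion following \eqref{eq:endsdef}) this would force $\EE(G)=2$, contradicting $\EE(G)=\infty$. Therefore $N\neq U$, which completes the argument.
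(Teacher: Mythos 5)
Your argument is correct and is exactly the route the paper intends: the theorem is stated there without a written proof, just with the citations ``(cf.\ Thm.~\ref{thm:Fpdir}, Thm.~\ref{thm:mend})'', i.e.\ apply Theorem~\ref{thm:mend} to produce the open normal subgroup $U$ with an $\F_p$-direction and then Theorem~\ref{thm:Fpdir} to $U$. Your verification of $N\neq U$ (if $N=U$ then (c) gives $U^{\ab,\el}\simeq\F_p$, so $U\simeq\Z_p$ and $\EE(G)=2$ by Korenev's classification, a contradiction) and your observation that $U$ is finitely generated, hence countably based, so that the splitting statement applies, are details the paper leaves implicit, and both are handled correctly.
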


\providecommand{\bysame}{\leavevmode\hbox to3em{\hrulefill}\thinspace}
\providecommand{\MR}{\relax\ifhmode\unskip\space\fi MR }
\providecommand{\MRhref}[2]{%
  \href{http://www.ams.org/mathscinet-getitem?mr=#1}{#2}
}
\providecommand{\href}[2]{#2}

\end{document}